\newenvironment{proof}{\noindent{{\bf Proof}:\ }}{\vspace{2ex}}
\def\numberlikeadb{\global\def\theequation{\thesection.\arabic{equation}}}
\newtheorem{theorem}{Theorem}[section]
\newtheorem{lemma}[theorem]{Lemma}
\newtheorem{corollary}[theorem]{Corollary}
\newtheorem{proposition}[theorem]{Proposition}
\newcommand{\var}{{\mathrm{Var}}}
\newcommand{\Cov}{{\mathrm{Cov}}}
\newcommand{\beas}{\begin{eqnarray*}}
\newcommand{\enas}{\end{eqnarray*}}
\newcommand{\eqs}{\begin{eqnarray*}}
\newcommand{\ens}{\end{eqnarray*}}
\newcommand{\bea}{\begin{eqnarray}}
\newcommand{\ben}{\end{eqnarray}}
\newcommand{\eqa}{\begin{eqnarray}}
\newcommand{\ena}{\end{eqnarray}}
\newcommand{\eq}{\begin{equation}}
\newcommand{\en}{\end{equation}}
\newcommand{\halmos}{\rule{1ex}{1.4ex}}
\def\half{{\textstyle{\frac12}}}
\def\third{{\textstyle{\frac13}}}
\def\quarter{{\textstyle{\frac14}}}
\def\eighth{{\textstyle{\frac18}}}
\def\Ref#1{(\ref{#1})}
\def\a{\alpha}
\def\b{\beta}
\def\s{\sigma}
\def\l{\lambda}
\def\p{\pi}
\def\sjn{\sum_{j=1}^n}
\def\dtv{d_{\mathrm{TV}}}
\def\law{{\cal L}}
\def\ep{\hfill $\proofbox$ \bigskip}
\def\re{{\mathbb R}}
\def\giv{\,|\,}
\def\non{\nonumber}
\def\e{\varepsilon}
\def\m{\mu}
\def\var{{\rm Var\,}}
\def\r{\rho}
\def\t{\tau}
\def\g{\gamma}
\def\h{\eta}
\def\ps{\psi}
\def\Bl{\left(}
\def\Br{\right)}
\def\Blb{\left\{}
\def\Brb{\right\}}
\def\Bi{{\rm Bi\,}}
\def\nin{\noindent}
\def\D{\Delta}
\def\ex{{\mathbb E}}
\def\pr{{\mathbb P}}
\def\Be{{\rm Be\,}}
\def\ff{{\cal F}}
\def\d{\delta}
\def\Eq{\ =\ }
\def\Le{\ \le\ }
\def\G{\Gamma}
\def\ui{^{(1)}}
\def\bgamma{{\mathbf z}}
\def\uj{^{(j)}}
\def\ul{^{(l)}}
\def\uii{^{(i)}}
\def\ignore#1{}
\def\cov{\Cov}
\def\ch{\chi}
\def\bone{{\bf{1}}}
\def\integ{{\mathbb Z}}
\def\nti{n\to\infty}
\def\Def{\ :=\ }
\def\ut{^{(2)}}
\def\uid{^{(i')}}
\def\tW{{\widetilde W}}
\def\hZ{{\widehat Z}}
\def\l{\lambda}
\def\Z{\integ}
\def\ur{^{(r)}}
\def\S{\Sigma}
\def\bpi{{\boldsymbol{\pi}}}
\def\ccc{ C}
\def\tm{{\tilde \m}}
\def\tmm{{\tilde m}}
\def\tv{{\tilde v}}
\def\ujk{^{(j,k)}}
\def\andd{\quad \mbox{and}\quad }
\def\leqn{\lefteqn}
\def\tY{{\widetilde Y}}
\def\tG{{\widetilde G}}
\def\tX{{\widetilde X}}
\def\cc{c}
\def\DN{{\rm DN}}
\def\tr{{\rm Tr}}
\def\hn{{m}}
\def\ABA{{\widetilde\AA}}
\def\eii{e\uii}
\def\AA{{\cal A}}
\def\tih{{\tilde h}}
\def\BLX{Barbour, Luczak \& Xia}
\def\NN{{\cal N}}
\def\ej{e^{(j)}}
\def\Rh{\r}
\def\ep{\hfill$\halmos$}
\def\tbd{{\widetilde D}}
\def\tbdt{{\widetilde{D^2}}}
\def\bdt{{\overline{D^2}}}
\def\ur{^{(r)}}
\def\gg{{\cal G}}
\def\ud{^{(d)}}
\def\hR{{\widehat R}}
\def\hmin{h_{\rm min}}
\def\tV{{\widetilde V}}
\def\te{{\tilde\e}}
\def\hpr{{\hat p}_r}
\def\QQ{{\cal Q}}
\def\sshn{{\half\hn\dnew}}
\def\shn{{\sqrt\hn}}
\def\hV{{\widehat V}}
\def\dnew{\d}
\def\uk{^{(k)}}
\def\caB{{\cal B}}
\def\caF{{\cal F}}
\def\caH{{\cal H}}
\def\esup{{{\rm esssup}}}
\def\uja{^{(\a)}}
\def\ujkab{^{(\a,\b)}}
\def\ujkaa{^{(\a,\a)}}
\def\ukb{^{(\b)}}
\def\intG{{\int_\G}}
\def\Ups{{\Upsilon}}
\def\ups{{\upsilon}}
\def\tXi{{\tilde\Xi}}
\def\qm{q}
\newenvironment{prooftxia}{{\bf Proof of Theorems~\ref{DN-approx}
 and \ref{DN-approx-cont}}}{$\Box$ \vspace{1ex}}
\newcommand{\qed}{\nopagebreak\hspace*{\fill}
{\vrule width6pt height6ptdepth0pt}\par}
\renewcommand\theequation{\thesection.\@arabic\c@equation}
\begin{document}

\title{Multivariate approximation in total variation using local dependence}
\author{A. D. Barbour\footnote{Institut f\"ur Mathematik, Universit\"at Z\"urich,
Winterthurerstrasse~190, 8057 Z\"urich, Switzerland;
e-mail {\tt a.d.barbour@math.unizh.ch}.
Work carried out in part at the University of Melbourne and at Monash University, and
supported in part by Australian Research Council Grants Nos DP150101459 and DP150103588.}
\andd   
A. Xia\footnote{School of Mathematics and Statistics, University of Melbourne, Parkville, VIC 3010, Australia;
e-mail {\tt aihuaxia@unimelb.edu.au}.
Work supported in part by Australian Research Council Grant No.\ DP150101459.}  
\\[1ex]   
Universit\"at Z\"urich  and  University of Melbourne}    
\date{\today}
\maketitle

\begin{abstract} 
We establish two theorems for assessing the accuracy in total variation of multivariate discrete 
normal approximation 
to the distribution of an integer valued random vector $W$. The first is for sums of random vectors whose
dependence structure is local.  The second applies to random vectors~$W$ resulting from integrating the $\Z^d$-valued 
marks of a marked point process with respect to its ground process. The error bounds are of magnitude comparable
to those given in Rinott \& Rotar~(1996), but now with respect to the stronger total variation distance.
Instead of requiring the summands to be bounded, we make third moment assumptions.
We demonstrate the use of the theorems in four applications: monochrome edges in vertex coloured graphs, 
induced triangles and $2$-stars in random geometric graphs, the times spent in different states by an 
irreducible and aperiodic finite Markov chain, and
the maximal points in different regions of a homogeneous Poisson point process.
\end{abstract} 

\vskip12pt \noindent\textit {Key words and phrases\/}: 
Total variation approximation,  Stein's method, local dependence, marked point process.

\vskip12pt \noindent\textit{AMS 2010 Subject Classification\/}:
Primary 60F05;
secondary 60E15, 60G55, 60J27.

\section{Introduction}
\setcounter{equation}{0}
In this paper, we prove a general theorem that can be used to give bounds in total variation on
the accuracy of multivariate discrete normal approximation to the distribution
of a random vector $W$ in~$\Z^d$, when~$W$ is a sum of~$n$ random vectors whose
dependence structure is local. Our setting is rather similar to that in Rinott \& Rotar~(1996).
In their paper, Stein's method is used to derive the accuracy, in terms of the convex sets metric, 
of multivariate normal approximation to suitably normalized sums of {\it bounded\/} random vectors; under
reasonable conditions, error bounds of order $O(n^{-1/2}\log n)$ are obtained.
Fang~(2014) improves the order of the error to $O(n^{-1/2})$, using slightly different conditions,
and also obtains optimal dependence on the dimension~$d$.
Here, we are interested in total variation distance bounds, so as to be able to approximate the probabilities 
of {\it arbitrary\/} sets.  For random elements of~$\Z^d$, this necessitates replacing the multivariate
normal distribution by a discretized version.  We use the $d$-dimensional discrete normal 
distribution $\DN_d(nc,n\S)$ that is obtained from 
the multivariate normal distribution $\NN_d(nc,n\S)$ by assigning the probability of 
the $d$-box 
\[
    [i_1 - 1/2,i_1 + 1/2) \times\cdots \times [i_d-1/2,i_d+1/2)
\]
to the integer vector $(i_1,\ldots,i_d)^T$, for each $(i_1,\ldots,i_d)^T\in \Z^d$. 
This family of distributions is a natural choice, when approximating a
discrete random vector in a central limit setting.  We are able to establish discrete normal
approximation under conditions broadly analogous to those of Rinott \& Rotar~(1996) and Fang~(2014),
with an error of order $O(n^{-1/2}\log n)$, but without their boundedness assumption;  a suitable
third moment condition is all that is needed.

For generality, we replace $n$ with an $\hn$ which is essentially the dimension adjusted trace of
the covariance matrix of $W$. Our approach to establishing approximation in total variation by $\DN_d({\hn}c,{\hn}\S)$ is by way of
Stein's method.  Letting $e\uii$ denote the coordinate vector in the $i$-direction, we start with a Stein operator $\ABA_{{\hn}}$ defined by
\eq\label{A-def}
    (\ABA_{{\hn}} h)(z) \Def {\hn}\tr(\S \D^2 h(z)) - (z-{\hn}c)^T\D h(z),\quad z\in\Z^d,
\en
where
\[
     \D_j h(z) \Def h(z+\ej) - h(z);\quad \D^2_{jk}h(z) \Def \D_j(\D_k h)(z).
\]
For any function $h\colon \Z^d \to \re$,  $z\in\Z^d$ and $0 < r \le \infty$, define
\eqa
    |\D h(z)| &:=& \max_{1\le i\le d}|\D_i h(z)|; \quad |\D^2 h(z)| \Def \max_{1\le i,k\le d}|\D^2_{ik} h(z)|; \non \\
   \|\D h\|_{r,\infty} &:=& \max_{z \in \Z^d \cap B_r({\hn}c)}|\D h(z)|; 
            \quad \|\D^2 h\|_{r,\infty} \Def \max_{z \in \Z^d \cap B_r({\hn}c)}|\D^2 h(z)|, \label{h-norms}
\ena 
where $B_r(x) := \{y\in\re^d\colon |y-x| \le r\}$; note that the centre~${\hn}c$ is suppressed
in the norm notation.
Using the operator $\ABA_{{\hn}}$, the following abstract result can be deduced from \BLX~(2018b, 
Theorem~2.4 and 2018a, Remark~4.2).

\begin{theorem}\label{ADB-DN-approx-thm}
Let $W$ be a 
random vector in~$\Z^d$ with mean $\m := \ex W$ and
positive definite covariance matrix $V := \ex\{(W-\m)(W-\m)^T\}$; define 
${\hn} := \lceil d^{-1}\tr V \rceil$, $c := {\hn}^{-1}\m$ and $\S := {\hn}^{-1}V$.
Set $\d_0 := \frac1{72}\,\Rh(\S)^{-3/2}$. 
Then, for any $0 < \d \le \d_0$, there exist 
$C_{\ref{ADB-DN-approx-thm}}(\d), n_{\ref{ADB-DN-approx-thm}}(\d) < \infty$, 
depending continuously on $\d$ and the condition number~$\r(\S)$ of~$\S$, but not on $d$ or~${\hn}$,
with the following property:
if, for some  $\e_1$, $\e_{20}$, $\e_{21}$ and $\e_{22}$, 
and  for some ${\hn} \ge n_{\ref{ADB-DN-approx-thm}}({\d})$,
{\rm 
\begin{enumerate}
 \item $\dtv(\law(W),\law(W+\ej)) \Le \e_1$, for each $1\le j\le d$;
 \item $ |\ex\{\ABA_{{\hn}} h(W)\}I[|W-\m| \le {\hn}\d]| \\[1ex]
     \mbox{}\qquad \Le \e_{20}\|h\|_{3{\hn}\d_0/2,\infty} + \e_{21}{\hn}^{1/2}\|\D h\|_{3{\hn}\d_0/2,\infty} 
       + \e_{22}{\hn}\|\D^2 h\|_{3{\hn}\d_0/2,\infty}$,
\end{enumerate}
}
\nin for all $h\colon \Z^d\to\re$, then it follows that 
\eqs
    \leqn{\dtv(\law(W),\DN_{d}({\hn}c,{\hn}\S))} \\
    &&\quad \Le C_{\ref{ADB-DN-approx-thm}}(\d)(d^4({\hn}^{-1/2}+ \e_1) 
       + \e_{20} + \e_{21} + \e_{22})\log {\hn} .
\ens
\end{theorem}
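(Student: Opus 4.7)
The plan is to deduce the bound by combining the Stein solution estimates of \BLX~(2018b, Theorem~2.4) with the smoothing construction of \BLX~(2018a, Remark~4.2). By the definition of total variation, it suffices to bound $|\ex f(W) - \DN_d({\hn}c,{\hn}\S)(f)|$ uniformly over $f = \bone_A$, $A\subset\Z^d$. For each such~$f$, let $h_f$ solve the Stein equation $\ABA_{\hn} h_f = f - \DN_d(f)$; then $\ex f(W) - \DN_d(f) = \ex\{\ABA_{\hn} h_f(W)\}$, and the task reduces to bounding this expectation.

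The first ingredient is the solution bound from \BLX~(2018b): for a sufficiently smooth~$f$, the quantities $\|h_f\|_{r,\infty}$, ${\hn}^{1/2}\|\D h_f\|_{r,\infty}$ and ${\hn}\|\D^2 h_f\|_{r,\infty}$, with $r = 3{\hn}\d_0/2$, are all of constant order, up to a $\log{\hn}$ factor and a constant depending continuously on $\d$ and $\r(\S)$. To reduce from indicator functions to the required smooth~$f$, I would invoke the smoothing step of \BLX~(2018a, Remark~4.2): replacing $\bone_A$ by its convolution with an appropriate kernel produces a smoothed test function with the desired regularity, while condition~(a) controls the resulting smoothing error in terms of $\e_1$, yielding a residual of order $d^4({\hn}^{-1/2} + \e_1)\log{\hn}$. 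The factor $d^4$ reflects the number of axial translates needed for the smoothing on~$\Z^d$.

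With such an $h_f$ in hand, I would split $\ex\{\ABA_{\hn} h_f(W)\}$ according to whether $|W-\m|\le{\hn}\d$ or not. On the localized event, condition~(b) with $h = h_f$ directly gives $\e_{20}\|h_f\|_{r,\infty} + \e_{21}{\hn}^{1/2}\|\D h_f\|_{r,\infty} + \e_{22}{\hn}\|\D^2 h_f\|_{r,\infty}$, which the solution estimates bound by a constant depending on $\d$ and $\r(\S)$ times $(\e_{20}+\e_{21}+\e_{22})\log{\hn}$. For the tail event, Chebyshev applied with $\ex|W-\m|^2 = \tr V = d{\hn}$ yields probability of order $d/{\hn}$, and a uniform polynomial control on $\ABA_{\hn} h_f$ on~$\Z^d$ absorbs this contribution into the $d^4{\hn}^{-1/2}\log{\hn}$ term, provided that ${\hn}\ge n_{\ref{ADB-DN-approx-thm}}(\d)$.

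The delicate step will be reconciling the localization radius of the Stein solution bounds in \BLX~(2018b)---valid only inside $B_{3{\hn}\d_0/2}({\hn}c)$---with the truncation at radius ${\hn}\d$ used in condition~(b), while simultaneously tracking the $d^4$ dimensional dependence through the smoothing procedure and the tail estimate, so that the final constant really depends only on $\d$ and $\r(\S)$, independently of~$d$ and~${\hn}$.
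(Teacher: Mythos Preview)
The paper does not give a proof of this theorem at all: it is stated as an abstract input, with the remark that it ``can be deduced from \BLX~(2018b, Theorem~2.4 and 2018a, Remark~4.2)'', and that ``the unspecified constants can in principle be deduced from the more detailed information in \BLX~(2018a,b)''. Your sketch is therefore already more explicit than anything in the paper, and it correctly identifies the two ingredients the paper points to --- the Stein solution bounds of \BLX~(2018b) and the smoothing construction of \BLX~(2018a) --- and combines them in the expected way.

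One caveat on your sketch: the step where you split $\ex\{\ABA_{\hn} h_f(W)\}$ into a localized part and a tail part, and then dispose of the tail via Chebyshev together with ``a uniform polynomial control on $\ABA_{\hn} h_f$ on~$\Z^d$'', is the one place where your outline is genuinely loose. The paper itself flags precisely this difficulty, noting that ``the solutions to the Stein equation for this problem may grow large as the distance from~${\hn}c$ increases''; a crude polynomial bound on~$h_f$ combined with $\pr[|W-\m|>{\hn}\d]=O(d/{\hn})$ does not obviously produce the claimed $d^4{\hn}^{-1/2}\log{\hn}$. In the \BLX\ framework the truncation is not handled as a separate after-the-fact correction; rather, Theorem~2.4 of \BLX~(2018b) is already formulated so that only the restricted norms $\|h\|_{3{\hn}\d_0/2,\infty}$, $\|\D h\|_{3{\hn}\d_0/2,\infty}$, $\|\D^2 h\|_{3{\hn}\d_0/2,\infty}$ enter, and the tail contribution is absorbed into the structure of that theorem. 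So your overall plan is right, but the tail step should be replaced by a direct appeal to the truncated formulation in \BLX~(2018b) rather than an ad hoc Chebyshev argument.
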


\nin The unspecified constants can in principle be deduced from the more detailed
information in \BLX~(2018a,b).

Applying the theorem in practice may not be easy.  Condition~(b) is much like the
sort of condition that has to be checked to prove multivariate normal approximation
using Stein's method (Chen, Goldstein \& Shao~(2011, p. 337)), 
with differences and derivatives exchanged, except for the indicator $I[|W-{\hn}c| \le {\hn}\d]|$,
which truncates~$W$ to the ball $B_{{\hn}\d}({\hn}c)$.
The truncation has both good and bad consequences.  It introduces an awkward discontinuity 
inside the expectation, which needs careful treatment in the arguments that follow.
On the other hand, it ensures that all the expectations to be considered are finite,
and that the function~$h$ only has to be evaluated within certain closed balls around
${\hn}c$;  this latter feature is important, because the solutions to the Stein equation
for this problem may grow large as the distance from~${\hn}c$ increases.
Condition~(a) imposes a certain smoothness on the distribution of~$W$.

In Section~\ref{main}, we prove a multivariate approximation theorem, 
Theorem~\ref{DN-approx}, with error
bounds in the total variation distance, that is much simpler to use than Theorem~\ref{ADB-DN-approx-thm}.
The setting is one of predominately local dependence.  The basic elements making up the
error bounds are sums of third moments, similar to those that would be expected to quantify
the error in the CLT for dissociated summands, together with dependence coefficients analogous
to those in Rinott \& Rotar~(1996). 
However, there is an extra quantity~$\e_{W}$ appearing in the bound, which quantifies the smoothness of 
the distribution of~$W$, and which is not as simple to express in concrete terms. We also consider a more 
general setting, in which 
$W$ arises from integrating the marks of a marked point process 
with respect to its ground process on a suitable metric space.  {For integrals of functionals
of a Poisson process, Schulte \& Yukich~(2018a,b) have recently established an order $O(n^{-1/2})$ rate of multivariate
approximation with respect to the convex sets metric, using the Malliavin--Stein approach and second order
Poincar\'e inequalities.  They require somewhat stronger moment assumptions than ours, but, as in the theorems of
Rinott \& Rotar~(1996) and of Fang~(2014), there is no need to bound an analogue of~$\e(W)$.}

In Section~\ref{independence},  we introduce a stronger notion of local dependence, that is convenient for
many applications.  It enables us to give rather simple error bounds, 
in Corollary~\ref{cor1},  expressed in terms of an upper
bound for the maximum of the third moments of the~{$|X\uja|$} and the sizes of the neighbourhoods in the
dependency graph, both being quantities that typically appear in error bounds in the CLT.
It also enables us to give a general result, Theorem~\ref{smoothness}, that is helpful for bounding~$\e_{W}$.  
The effectiveness of our bounds is illustrated in a number
of examples in Section~\ref{examples}.  These also give some insight into why,
in addition to the sort of moment conditions that suffice for approximation in metrics weaker
than total variation,  some smoothness condition is needed.

\section{Main {theorems}}\label{main}
\setcounter{equation}{0}

For the ease of use, we present our main results for the accuracy of multivariate discrete normal approximation 
in two distinct but related settings.
\ignore{ to the distribution of $W$ in two setups,
one is for $W$ as a sum of $n$ random vectors whose dependence structure is local,
and the other is for $W$ as a result from integrating the marks of a marked point process  
in terms of its ground process on a suitable metric space. }
We postpone the proofs of the main theorems to Section~\ref{proofmainresults}.

In the first setting, we suppose that~$W = \sjn X\uj$ is a sum of~$n$  vectors in~$\re^d$.
We assume that there are decompositions of the following form:
\begin{enumerate}
  \item
  For each $1\le j\le n$, we can write $W = W\uj + Z\uj$, where~$W\uj \in \Z^d$ is only weakly dependent on~$X\uj$; 
  \item
  For each $1\le j\le n$, we can write $Z\uj = \sum_{k=1}^{n_j} \tX\ujk$, with $\tX\ujk \in \Z^d$,
   and then, for each $1\le k\le n_j$,
   we can write $W\uj = W\ujk + Z\ujk$, where~$W\ujk \in \Z^d$ is only weakly dependent on~$(X\uj,{\tX}\ujk)$.
\end{enumerate}
Because of the restrictions to~$\Z^d$, centring on the mean is not possible in these decompositions,
but it could, for {instance}, be arranged that each component of $Z\uj$, $\tX\ujk$ and~$Z\ujk$ has mean with modulus
at most~$1$.  This makes no difference to the arguments that follow, but 
the moment sums $H_1$ and~$H_2$ that appear in the error bounds might otherwise be larger than necessary.

Weak dependence is expressed by the smallness of dependence coefficients analogous to those in
Rinott \& Rotar~(1996).  With $\m\uj := \ex X\uj$, we begin by defining
{\eqa
    \ch_{12j}  &:=&  \ex\bigl|\ex(|X\uj| \giv W\uj) - \ex |X\uj| \bigr|;
              \non\\
   \ch_{13j}  &:=&  \ex\bigl|\ex(|X\uj|_1 \giv W\uj) - \ex |X\uj|_1 \bigr|;
              \label{Chi-j}\\ 
   \ch_{2jk} &:=&  \sum_{i=1}^d \sum_{l=1}^d \ex\bigl|\ex\{|X_i\uj|\,|\tX_l\ujk| \giv W\ujk\}
                    - \ex\{|X_i\uj|\,|\tX_l\ujk|\}\bigr|\non\\
                    &&\ \ \ + \sum_{i=1}^d \sum_{l=1}^d |\m_i\uj|\ex\bigl|\ex\{|\tX_l\ujk| \giv W\ujk\}
                    - \ex\{|\tX_l\ujk|\}\bigr|\non\\
                    && \ \ \ +\sum_{i=1}^d \sum_{l=1}^d \ex\bigl|\ex\{X_i\uj \,\tX_l\ujk \giv W\ujk\}
                    - \ex\{X_i\uj\,\tX_l\ujk\}\bigr|\\
                    && \ \ \ +\sum_{i=1}^d \sum_{l=1}^d |\m_i\uj|\ex\bigl|\ex\{\tX_l\ujk \giv W\ujk\}
                    - \ex\{\tX_l\ujk\}\bigr|,
    \non
\ena}
and then set
\eqa
   \ch_{11} &:=& (d\hn)^{-1/2} \sjn \ex|\ex(X\uj \giv W\uj) - \ex X\uj|; \non\\
   \ch_{12}  &:=& (d\hn)^{-1/2} \sjn \ch_{12j}; \qquad\qquad \ch_{13}  \Eq d^{-1}\hn^{-1/2} \sjn \ch_{13j};
              \non\\          
  \ch_{2} &:=& d^{-3}\hn^{-1}\sjn \sum_{k=1}^{n_j} \ch_{2jk};  \label{RR-csts}\\
  \ch_3 &:=& d^{-1}\hn^{-1}\sjn \ex\{|\ex(X\uj\giv W\uj)-\m\uj|\,|W\uj-\m|\}.\non
\ena
We then write $\ch_1 := \max_{1\le l\le 3}\ch_{1l}$.  
Note that the $\hn$-factors {defined in Theorem~\ref{ADB-DN-approx-thm}} are not present in the 
quantities in Rinott \& Rotar~(1996) that are
directly analogous to $\ch_{11}$, $\ch_2$ and~$\ch_3$.  This is
because, in their formulation, the random variables corresponding to~$X\uj$ are normalized to make
$\cov(W)$ close to the identity matrix.  Since our sum~$W$ is not normalized, to keep its
values in~$\Z^d$, the elements of its covariance matrix typically grow with~$n$. 
The quantities $\ch_{12}$ and~$\ch_{13}$ have no direct analogue in Rinott \& Rotar~(1996), and
appear only in dealing with the truncation to $B_{n\d}(\m)$, something that is not needed
in their arguments.

Assuming that $\ex|X\uj|^3 < \infty$ for each $j \in [n]{:=\{1,2,\dots,n\}}$, we define
\eqa
   \m &:=& \ex W \Eq \sjn \m\uj; \quad
   V \Def \cov(W), \label{moments}
\ena
and set
\eqa
   \hn &:=& \lceil d^{-1}\tr V \rceil; \quad \cc \Def \hn^{-1}\m;\quad \S \Def \hn^{-1}V, \label{hn-def}
\ena
so that $\tr(\S) = \hn^{-1}\tr V  \le d$; this makes~$\hn$ the analogue of the variance in
the one dimensional context.  
We then introduce some moment sums, used in the error estimates,  
\ignore{
These involve 
random vectors $Z\uj$, $\tX\ujk$ and~$Z\ujk$ that would be centred in many analogous
theorems for normal approximation in~$\re^d$, but need to be integer valued here.
They can be expected to give useful bounds provided, for instance, that the~$X\uj$ are `centred'
so that $|\m_l\uj| \le 1$ , for each $1\le l\le d$ and $1\le j\le n$.
}
defining
\eqa
   H_{21} &:=& d^{-3/2}\hn^{-1}\sjn \ex\{{(|X\uj|+|\m\uj|)}\,|Z\uj|^2\};               \non \\
   H_{22} &:=& d^{-3/2}\hn^{-1}\sjn \sum_{k=1}^{n_j} \ex\{{(|X\uj|+|\m\uj|)}\,|\tX\ujk|\,|Z\ujk|\};   \non\\
   H_{23} &:=& d^{-3/2}\hn^{-1}\sjn \sum_{k=1}^{n_j} \ex\{{(|X\uj|+|\m\uj|)}\,|\tX\ujk|\}\ex|Z\ujk|; \non\\    
   H_{24}  &:=& d^{-3/2}\hn^{-1}\sjn \sum_{k=1}^{n_j} \ex\{{(|X\uj|+|\m\uj|)}\,|\tX\ujk|\}\ex|Z\uj|,\non
\ena
and then setting
\eqa
   H_0 &:=& d^{-1/2}\hn^{-1}\sjn \ex{|X\uj|}; \non\\
   H_{1} &:=&  d^{-1}\hn^{-1}\sjn \sum_{k=1}^{n_j} \ex\{{(|X\uj|+|\m\uj|)}\,|\tX\ujk|\};\label{H-defs}\\
   H_{2} &:=&  \max_{1\le l\le 4}H_{2l}.\non
\ena
We also assume that
\eq\label{Zhat-def}
   \ex\{|Z\uj|^2\} \Le d\hn;\qquad \ex\{|Z\ujk|^2\} \Le d\hn, \qquad \mbox{for all}\ 1\le j\le n;\,1\le k\le n_j.
\en
In view of the definitions of $\hn$ and~$V$, $H_0$ and~$H_1$ can be expected to
be of moderate size in many applications, $H_2$  can be expected to grow with the size of a
typical neighbourhood of a vertex~$j$, and the assumption~\Ref{Zhat-def} can be expected to be satisfied.
The various $d$-factors are designed to offset any automatic dimension dependence in the corresponding
quantities, but their choice plays no essential part in the bounds given below.

We now make a smoothness assumption on the distributions of $W\uj$ and~$W\ujk$ that is key for approximation
in total variation. We
assume that, for each $1\le j \le n$, $1\le k\le n_j$ and $1\le i\le d$, we have
\eq\label{dtv-assn}
  \begin{array}{l}
   \dtv\bigl(\law(W\uj + e\uii \giv X\uj,Z\uj),
                 \law(W\uj \giv X\uj,Z\uj) \bigr) \Le {\e_W}\ {\rm a.s.}; \\[1ex]
   \dtv\bigl(\law(W\ujk + e\uii \giv X\uj,\tX\ujk,Z\ujk),
                 \law(W\ujk \giv X\uj,\tX\ujk,Z\ujk) \bigr) \Le {\e_W}\ {\rm a.s.},
  \end{array}
\en
for some ${\e_W} < 1$.  
Of course, for the bounds that we shall prove, we shall want~${\e_W}$ to be suitably small.
This assumption is clearly useful in establishing Condition~(a) of Theorem~\ref{ADB-DN-approx-thm},
but is also used throughout the treatment of $|\ex\{\ABA_\hn h(W)\}I[|W-\m| \le {\hn}\d]|$.

\begin{theorem}\label{DN-approx}
Let $W := \sum_{j=1}^n X^{(j)}$ be decomposed as above, with~\Ref{Zhat-def} satisfied,
and suppose that~$V$ is positive definite.  Then there exist constants 
$C_{\ref{DN-approx}}$ and~$n_{\ref{DN-approx}}$, depending continuously on the condition number~$\r(V)$, such that
\eqs
   \lefteqn{\dtv(\law(W),\DN_d(\m,V) }\\
  &&\Le C_{\ref{DN-approx}}d^3\log\hn\{(d+H_2)\e_{W} + (d+H_0+H_2+\hn^{-1/2}H_1)\hn^{-1/2}
               + (\ch_1 + \ch_2 + \ch_3)\},
\ens
for all ${\hn} \ge n_{\ref{DN-approx}}$. 
\end{theorem}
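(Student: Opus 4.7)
The plan is to reduce the theorem to an application of Theorem~\ref{ADB-DN-approx-thm} by establishing its Conditions~(a) and~(b) with explicit constants $\e_1, \e_{20}, \e_{21}, \e_{22}$ that can then be compared against the stated bound. Condition~(a) is essentially immediate: the smoothness assumption~\Ref{dtv-assn} applied to $W\uj$, combined with the decomposition $W = W\uj + Z\uj$ and conditioning on $(X\uj,Z\uj)$, yields
$$
\dtv(\law(W),\law(W+e\uii)) \Le \e_W,
$$
so one may take $\e_1 := \e_W$.

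The bulk of the work lies in verifying Condition~(b). I would expand
\begin{eqnarray*}
\ex\{\ABA_\hn h(W)I[|W-\m|\le \hn\d]\} &=& \ex\{\hn\tr(\S\D^2 h(W))I[\cdot]\} \\
&& - \sum_{j=1}^n \ex\{(X\uj-\m\uj)^T \D h(W) I[\cdot]\},
\end{eqnarray*}
and Taylor-expand each $\D h(W) = \D h(W\uj) + \D^2 h(W\uj)Z\uj + R^{(j)}$. The ``diagonal'' term involving $\D h(W\uj)$, summed over $j$, would vanish if $X\uj$ and $W\uj$ were independent; the actual residual is controlled by $\ch_{11}$ (and, after reinstating truncation, by $\ch_{12},\ch_{13}$). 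The first-order correction, $\D^2 h(W\uj)Z\uj = \sum_k \D^2 h(W\uj)\tX\ujk$, is then opened up by the second-level decomposition $W\uj = W\ujk + Z\ujk$, producing
$$
\sum_j \sum_k \ex\{(X\uj-\m\uj)\tX\ujk{}^T\D^2 h(W\ujk)\} \approx \sum_j \ex\{(X\uj-\m\uj)(Z\uj)^T\} \ex\D^2 h(W) \approx \tr(V\,\ex\D^2 h(W)),
$$
the errors being quantified by $\ch_2$ and $\ch_3$ (for the dependence of $(X\uj,\tX\ujk)$ on $W\ujk$), and by the third-order Taylor remainders, which are bounded in terms of $\|\D^2 h\|$ times the moment sums $H_{21},\ldots,H_{24}$ constituting $H_2$. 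Once this bookkeeping is complete, the leading piece $\hn\tr(\S\D^2 h(W))$ cancels against the second-order Taylor contribution, leaving only the claimed error terms.

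The main obstacle, characteristic of total-variation arguments as opposed to smooth-test-function arguments, is the treatment of the indicator $I[|W-\m|\le \hn\d]$. When one replaces $W$ by $W\uj$ or $W\ujk$ in $\D h$ or $\D^2 h$ it is natural to swap the argument of the indicator as well, but the resulting discontinuity produces boundary contributions that must be bounded by the probability that $|W-\m|$ lies in a thin shell of width $|Z\uj|$ (respectively $|\tX\ujk|$) around $\hn\d$. This is precisely where \Ref{dtv-assn} becomes indispensable: the unit-shift total variation bound on $\law(W\uj)$, combined with the moment bound \Ref{Zhat-def} on $Z\uj$, controls each shell probability by a product of the form $\e_W\ex|Z\uj|$, which, once assembled, generates the $(d+H_2)\e_W$ contribution in the final bound. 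The $H_0, H_1$ and $\hn^{-1/2}$ contributions in turn originate from standard Berry--Esseen-type estimates on the third-moment remainders.

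Assembling these pieces, one arrives at bounds
$$
\e_{20}+\e_{21}+\e_{22} \Le C\{(d+H_2)\e_W + (d+H_0+H_2+\hn^{-1/2}H_1)\hn^{-1/2} + \ch_1+\ch_2+\ch_3\}
$$
(up to a multiplicative constant depending continuously on $\r(\S)=\r(V)$). Feeding these, together with $\e_1=\e_W$, into Theorem~\ref{ADB-DN-approx-thm} gives the claimed bound after absorbing the factor $\log\hn$; the $d^{-1/2}, d^{-1}, d^{-3/2}, d^{-3}$ normalizations built into the definitions~\Ref{RR-csts} and~\Ref{H-defs} are what permit the $d^3\log\hn$ prefactor rather than the cruder $d^4\log\hn$ given by a direct reading of the abstract theorem.
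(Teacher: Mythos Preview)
Your proposal is correct and follows essentially the paper's approach: reduce to Theorem~\ref{ADB-DN-approx-thm} with $\e_1=\e_W$, and verify Condition~(b) via the two-level Taylor expansion you describe, the $\ch$'s absorbing residual dependence and the $H$'s the third-moment remainders (the paper in fact proves the more general Theorem~\ref{DN-approx-cont} first and then specializes). One technical refinement worth noting: the indicator is not handled by bounding shell probabilities via~$\e_W$; rather, the paper first uses an extension lemma (Lemma~\ref{h-extension}) to continue~$h$ outside $B_{3\hn\d/2}(\m)$ so that $\|\D h\|_\infty \le \sqrt d\,\|\D h\|_{\hn\d_0,\infty}$, which lets the indicator be removed from the first-order part at a pure Chebyshev cost, and then handles the second-order indicator swaps by applying the smoothness~\Ref{dtv-assn} directly to the bounded truncated test functions $w\mapsto \D^2_{il}h(w)\,I[|w-\m|\le\hn\d]$ --- this is the actual source of the $\e_W H_2$ term, while the raw indicator-difference contributions are controlled by Chebyshev and feed into the $\hn^{-1/2}(H_0+H_1)$ pieces.
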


Our second setting is somewhat more general. We suppose that~$W$ results from integrating the marks of a 
marked point process with respect to its ground process. We assume that the carrier space~$\G$ of 
the ground point process~$\Xi$ is a locally compact second countable Hausdorff topological  space (Kallenberg~(1983, p.~11)),
with Borel $\s$-field $\caB(\G)$.  Let $\tG:=\G\times \Z^d$, and equip it with the product Borel 
$\s$-field $\caB(\tG)=\caB(\G)\times \caB(\Z^d)$.
We use~$\caH$ to denote the space of all locally finite non-negative integer valued measures $\xi$ on~$\tG$ such 
that $\xi(\{\a\}\times \Z^d)\le 1$ for all $\a\in\G$. The space~$\caH$ is endowed with the $\s$-field 
$\caB(\caH)$ generated by the vague topology (Kallenberg~(1983, p.~169)). 
A {\it marked point process\/} $\tXi$ is a measurable mapping from
$(\Omega,\caF,\pr)$ to $(\caH,\caB(\caH))$ (Kallenberg~(2017, p.~49)). The induced simple point process 
$\Xi(\cdot):=\tXi(\cdot\times\Z^d)$ is called the {\it ground process\/} (Daley \& Vere-Jones~(2008, p.~3)) or 
projection  (Kallenberg~(2017, p.~17)) of 
the marked point process~$\tXi$. We define $X\uja=yI[\tXi(\{(\a,y)\})=1]$ 
to represent the {\it mark\/} of $\tXi$ at~$\a$. We assume that the ground process~$\Xi$ is locally finite,
 with mean measure~$\nu$.

Let $\{D_\a,\a\in\G\}$ be a class of neighbourhoods such that, for each $\a\in\G$, $D_\a\in\caB(\G)$ is a Borel 
set containing~$\a$ and such that $D=\{(\a,\b)\colon \b\in D_\a, \a\in\G\}$ is  a measurable subset of 
the product space
$\G^2:=\G\times\G$ with the product Borel $\s$-field $\caB(\G)\times \caB(\G)$. For the neighbourhoods 
$\{D_\a,\a\in\G\}$, one can easily adapt the proof in Chen \& Xia~(2004) to show that the 
mapping $(\a,\xi)\mapsto (\a,\xi|_{D_\a\times\Z^d})$
is a measurable mapping from $(\G\times\caH,\caB(\G)\times\caB(\caH))$ into
itself, where $\xi|_{D_\a\times\Z^d}$ is the restriction of $\xi\in\caH$ to $D_\a\times\Z^d$ (Kallenberg~(1983, p.~12)). 

Our goal is to establish the accuracy of discrete normal approximation to~$W = \intG X\uja \Xi(d\a)$. 
When $\G=\{1,\dots,n\}$ and~$\Xi$ is the counting measure on~$\G$, $W$ reduces to the sum in the previous setting,
{so the bound in Theorem~\ref{DN-approx} is a corollary of that in Theorem~\ref{DN-approx-cont}.
However, if there is dependence between $\Xi$ and~$X$, then there is significant difference between the two settings. 
For the latter setting, it is necessary to introduce {extra machinery,
including} the first and second order Palm distributions (Kallenberg~(1983, p.~83 and p.~103)), 
to tackle the problem.}

For convenience, we use $\pr_\a$, $\ex_\a$ and $\law_\a$ to stand for the conditional probability, conditional 
expectation and conditional distribution given $\{\Xi(\{\a\})= 1\}$ respectively.
It is a routine exercise (Kallenberg~(1983, pp.~83--84)) to show that $\ex_{\a}$ satisfies 
$$
          \ex\intG f(X\uja,\a)\Xi(d\a)=\intG \ex_\a f(X\uja,\a)\nu(d\a)
$$
for all non-negative functions~$f$ on $(\Z^d\times\G,\caB(\Z^d)\times\caB(\G))$.
Similarly, for $\a\ne \b$, we use $\pr_{\a\b}$, $\ex_{\a\b}$ and $\law_{\a\b}$ to stand for the conditional probability, 
conditional expectation and conditional distribution given $\{\Xi(\{\a\})= 1\}\cap \{\Xi(\{\b\})= 1\}$ respectively. 
Writing $\nu_2(d\a,d\b)=\ex(\Xi(d\a)\Xi(d\b))$ for $\a\ne \b$, we can also show that $\ex_{\a\b}$ satisfies,  
\eq
     \ex\int_{\a,\b\in\G,\a\ne\b} f(X\uja,X\ukb,\a,\b)\Xi(d\b)\Xi(d\a)
       \Eq \ex\int_{\a,\b\in\G,\a\ne\b} \ex_{\a\b} f(X\uja,X\ukb,\a,\b)\nu_2(d\a,d\b),\label{secondPalmid}
\en
for any non-negative measurable function~$f$ on 
$(\Z^d\times\Z^d\times\G\times\G,\caB(\Z^d)\times\caB(\Z^d)\times\caB(\G)\times\caB(\G))$.
To avoid unnecessary complexity and to keep our notation consistent, we write $\pr_{\a\a}=\pr_\a$, $\ex_{\a\a}=\ex_\a$, 
$\law_{\a\a}=\law_{\a}$ and $\nu_2(d\a,d\a)=\nu(d\a)$ so that \Ref{secondPalmid} can be extended to
$$
      \ex\int_{\a,\b\in\G} f(X\uja,X\ukb,\a,\b)\Xi(d\b)\Xi(d\a)
        \Eq \ex\int_{\a,\b\in\G} \ex_{\a\b} f(X\uja,X\ukb,\a,\b)\nu_2(d\a,d\b).
$$
We set $\nu_\a(d\b)=\nu_2(d\a,d\b)/\nu(d\a)$.

As in the previous setting, we  assume that there are decompositions of the following form:
\begin{description}
  \item {(a')}
  For each $\a\in\G$, we can write $W = W\uja + Z\uja$, where~$W\uja \in \Z^d$ is only weakly dependent on~$X\uja$; 
  \item {(b')}
  For each $\a\in\G$, we can write $Z\uja = \int_{D_\a} \tX\ujkab \Xi(d\b)$, with $\tX\ujkab \in \Z^d$,
   and then, for each $\b\in D_\a$,
   we can write $W\uja = W\ujkab + Z\ujkab$, where~$W\ujkab \in \Z^d$ is only weakly dependent on~$(X\uja,\tX\ujkab)$. 
   In particular, we take $\tX\ujkaa=X\uja$, $Z\ujkaa=Z\uja$ and $W\ujkaa=W\uja$.
\end{description}
Next, we set $\m\uja := \ex_\a X\uja$ and $\mu:=\intG\m\uja\nu(d\a)$, and define
\eqa
    \ch_{12\a}'  &:=&  \ex_\a\bigl|\ex_\a(|X\uja| \giv W\uja) - \ex_\a |X\uja| \bigr|;
              \non\\
   \ch_{13\a}'  &:=&  \ex_\a\bigl|\ex_\a(|X\uja|_1 \giv W\uja) - \ex_\a |X\uja|_1 \bigr|;
              \label{Chi-j-cont}\\ 
   \ch_{2\a\b}' &:=&  \sum_{i=1}^d \sum_{l=1}^d \ex_{\a\b}\bigl|\ex_{\a\b}\{|X_i\uja |\,|\tX_l\ujkab| \giv W\ujkab\}
                    - \ex_{\a\b}\{|X_i\uja|\,|\tX_l\ujkab|\}\bigr|\non\\
                    &&  \ \ \ +\sum_{i=1}^d \sum_{l=1}^d \ex_{\a\b}\bigl|\ex_{\a\b}\{X_i\uja \tX_l\ujkab \giv W\ujkab\}
                    - \ex_{\a\b}\{X_i\uja\tX_l\ujkab\}\bigr|;\non\\
  \ch_{2\a\b}'' &:=& \sum_{i=1}^d \sum_{l=1}^d |\m_i\uja |\,\ex_\b\bigl|\ex_\b\{|\tX_l\ujkab| \giv W\ujkab\}
                    - \ex_\b\{|\tX_l\ujkab|\}\bigr|\non\\
                    &&\ \ \ + \sum_{i=1}^d \sum_{l=1}^d |\m_i\uja |\,\ex_\b\bigl|\ex_\b\{\tX_l\ujkab \giv W\ujkab\}
                    - \ex_\b\{\tX_l\ujkab\}\bigr|. \non
\ena
{Note that the quantities $\ch_{2\a\b}'$ and~$\ch_{2\a\b}''$ are more complicated than their counterparts
$\ch_{2jk}$ in the earlier setting, to allow for possible dependence between the ground process and the
marks.}
Then let
\eqa
   \ch_{11}'&:=& (d\hn)^{-1/2} \intG \ex_\a|\ex_\a(X\uja \giv W\uja) - \m\uja|\nu(d\a); \non\\
   \ch_{12}'  &:=& (d\hn)^{-1/2} \intG \ch'_{12\a}\nu(d\a); \qquad\qquad \ch'_{13}  \Eq d^{-1}\hn^{-1/2} \intG \ch'_{13\a}\nu(d\a);
              \non\\          
  \ch_1' &:=& \max_{1\le l\le 3}\ch'_{1l}\non\\
  \ch_{2}' &:=& d^{-3}\hn^{-1}\left(\intG \int_{D_\a} \ch'_{2\a\b}\nu_2(d\a,d\b)+\intG \int_{D_\a} \ch''_{2\a\b}\nu(d\b)\nu(d\a)\right);  \label{RR-csts-cont}\\
  \ch_3' &:=& d^{-1}\hn^{-1}\intG \ex_\a\{|\ex_\a(X\uja\giv W\uja)-\m\uja|\,|W\uja-\m|\}\nu(d\a).\non
\ena

As in the discrete sum, we assume that $\ex|X\uja|^3 < \infty$ $\nu$-a.s. Recalling
$$
   \m = \ex W \Eq \intG \m\uja\nu(d\a), 
$$
we define
\eq
   V \Def \cov(W);\quad \hn := \lceil d^{-1}\tr V \rceil; \quad 
            \cc \Def \hn^{-1}\m;\quad \S \Def \hn^{-1}V. \label{hn-def-cont}
\en
We next introduce some moment sums by defining
\eqa
   H_{21}' &:=& d^{-3/2}\hn^{-1}\intG \{\ex_\a\{|X\uja|\,|Z\uja|^2\}+|\m\uja|\,\ex\{|Z\uja|^2\}\}\nu(d\a);               \non \\
   H_{22}' &:=& d^{-3/2}\hn^{-1}\intG \int_{D_\a} \ex_{\a\b}\{|X\uja|\,|\tX\ujkab|\,|Z\ujkab|\}\nu_2(d\a,d\b)  \non\\
                &&+d^{-3/2}\hn^{-1}\intG \int_{D_\a}|\m\uja| \ex_{\b}\{|\tX\ujkab|\,|Z\ujkab|\}\nu(d\b)\nu(d\a);   \non\\
  H_{23}' &:=& d^{-3/2}\hn^{-1}\intG \int_{D_\a} \ex_{\a\b}\{|X\uja|\,|\tX\ujkab|\,\}\ex\{|Z\ujkab|\}\nu_2(d\a,d\b)  \non\\
                &&+d^{-3/2}\hn^{-1}\intG \int_{D_\a}|\m\uja| \ex_{\b}\{|\tX\ujkab|\,\}\ex\{|Z\ujkab|\}\nu(d\b)\nu(d\a);   \non\\
 H_{24}' &:=& d^{-3/2}\hn^{-1}\intG \int_{D_\a} \ex_{\a\b}\{|X\uja|\,|\tX\ujkab|\,\}\ex\{|Z\uja|\}\nu_2(d\a,d\b)  \non\\
                &&+d^{-3/2}\hn^{-1}\intG \int_{D_\a}|\m\uja| \ex_{\b}\{|\tX\ujkab|\,\}\ex\{|Z\uja|\}\nu(d\b)\nu(d\a),   \non\\
\ena
{noting the extra complication in $H_{22}'$, $H_{23}'$ and~$H_{24}'$ as compared with $H_{22}$, $H_{23}$ and~$H_{24}$,}
and then setting
\eqa
   H_0' &:=& d^{-1/2}\hn^{-1}\intG \ex_\a|X\uja|\nu(d\a); \non\\
   H_{1}' &:=&  d^{-1}\hn^{-1}\intG \int_{D_\a} \ex_{\a\b}\{|X\uja|\,|\tX\ujkab|\}\nu_2(d\a,d\b)\label{H-defs-cont}\\
              &&+d^{-1}\hn^{-1}\intG |\m\uja|\int_{D_\a} \ex_\b\{|\tX\ujkab|\}\nu(d\b)\nu(d\a);\non\\
   H_{2}' &:=&  \max_{1\le l\le 4}H'_{2l}.\non
\ena
As a consequence of the dependence between the marks and the ground process, 
{the analogue of~\Ref{Zhat-def} is more involved:} we need to assume that 
there exists a constant $\ccc\ge 1$ such that 
\eqa
  &&\ex_\a(|W-\m|^2)\Le \ccc d\hn \ \nu-{\rm a.s.},\ \ex_{\a\b}(|W-\m|^2)\Le \ccc d\hn\ \nu_2-{\rm a.s.},\non\\
  &&\ex\{|Z\uja|^2\}\Le d\hn,\ \ex_\a\{|Z\uja|^2\}\Le d\hn\ \nu-{\rm a.s.}, \label{Zhat-def-cont} \\
  &&\max\{\ex_{\a\b}\{|Z\uja|^2\},\ex_\b\{|Z\uja|^2\},\ \ex_\b\{|Z\ujkab|^2\},
          \ex_{\a\b}\{|Z\ujkab|^2\}\}\Le d\hn \ \nu_2-{\rm a.s.}. \non
\ena
{The analogue of~\Ref{dtv-assn} is even more involved.  First,}
for $1\le i\le d$, $\nu$-a.s. in $\a$ and $\nu_2$-a.s. in $\a,\b$,
we need to find $\e_{W}'< 1$ such that
\eq\label{dtv-assn-cont}
  \begin{array}{l}
  \dtv\bigl(\law(W\uja + e\uii \giv X\uja,Z\uja),
                 \law(W\uja \giv X\uja,Z\uja) \bigr) \Le \e_{W}'\ {\rm a.s.}; \\[1ex]
   \dtv\bigl(\law_\a(W\uja + e\uii \giv X\uja,Z\uja),
                 \law_\a(W\uja \giv X\uja,Z\uja) \bigr) \Le \e_{W}'\ {\rm a.s.}; \\[1ex]
   \dtv\bigl(\law(W\ujkab + e\uii \giv X\uja,\tX\ujkab,Z\ujkab),
                 \law(W\ujkab \giv X\uja,\tX\ujkab,Z\ujkab) \bigr) \Le \e_{W}'\ {\rm a.s.},\\[1ex]
 \dtv\bigl(\law_\b(W\ujkab + e\uii \giv X\uja,\tX\ujkab,Z\ujkab),
                 \law_\b(W\ujkab \giv X\uja,\tX\ujkab,Z\ujkab) \bigr) \Le \e_{W}'\ {\rm a.s.},\\[1ex]
 \dtv\bigl(\law_{\a\b}(W\ujkab + e\uii \giv X\uja,\tX\ujkab,Z\ujkab),
                 \law_{\a\b}(W\ujkab \giv X\uja,\tX\ujkab,Z\ujkab) \bigr) \Le \e_{W}'\ {\rm a.s.}\,.
  \end{array}
\en
{We then also need to find $\e_{W}''<1$ such that}
\eqa 
&&\dtv\bigl(\law_\a(W\uja),\law(W\uja)\bigr)\Le \e_{W}''\ \nu-{\rm a.s.}, \non\\
&&\dtv\bigl(\law_{\a\b}(W\ujkab),\law(W\ujkab)\bigr)\Le \e_{W}''\ \nu_2-{\rm a.s.},\non\\
&&\dtv\bigl(\law_\b(W\ujkab),\law(W\ujkab)\bigr)\}\Le \e_{W}''\  \nu_2-{\rm a.s.} . \label{dtvmissed1}
\ena
{Finally, we need a bound controlling the difference between some conditional and
unconditional expectations: we need to find} $\e_{W}'''<1$ such that
\eqa
&& |\ex_\a(W\uja)-\ex(W\uja)|\Le \e_{W}'''\ \nu-{\rm a.s.}\,.\label{dtvmissed2}
\ena
{Fortunately,
under many circumstances (see Barbour \& Xia~(2006)), 
both $\e_W''$ and $\e_W'''$ can be reduced to~$0$, as is the case in Example~\ref{max-pts}.}

\begin{theorem}\label{DN-approx-cont}
Let $W := \intG X\uja\Xi(d\a)$ be decomposed as above, such that~\Ref{Zhat-def-cont} is satisfied,
and suppose that~$V$ is positive definite.  Then there exist constants 
$C_{\ref{DN-approx-cont}}$ and~$n_{\ref{DN-approx-cont}}$, depending continuously on $\r(V)$, such that
\eqs
   \lefteqn{\dtv(\law(W),\DN_d(\m,V) }\\
  &&\Le C_{\ref{DN-approx-cont}}d^3\log\hn\{(d+H_2')\e_{W}' + (d+H_0'+H_2'+\hn^{-1/2}H_1')\hn^{-1/2}
               + (\ch_1' + \ch_2' + \ch_3')\\
  &&\ \ \ \ \ \ \ \ \ \ \ \ \ \ \ \ \ \ \ \ \ \ +\e_{W}''(d^{-2}m^{1/2}H_0'+d^{-1}H_1')+d^{-3/2}H_0'\e_{W}'''\},
\ens
for all ${\hn} \ge n_{\ref{DN-approx-cont}}$. 
\end{theorem}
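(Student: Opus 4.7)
The plan is to verify hypotheses (a) and (b) of Theorem~\ref{ADB-DN-approx-thm} for this more general marked-point-process setting and then invoke that theorem with appropriate choices of $\e_1, \e_{20}, \e_{21}, \e_{22}$. The overall strategy parallels the proof of Theorem~\ref{DN-approx}, but with every finite sum $\sjn$ or $\sum_{k=1}^{n_j}$ replaced by an integral against $\nu$ or~$\nu_2$ via the Campbell and second-order Palm formulae, and with the weak-dependence coefficients $\ch_1',\ch_2',\ch_3'$, moment sums $H_0',H_1',H_2'$ and smoothness parameters $\e_W',\e_W'',\e_W'''$ taking the place of their discrete counterparts. Because Theorem~\ref{DN-approx} is recovered by specializing $\G=\{1,\dots,n\}$ and $\Xi$ to counting measure (for which $\e_W''$ and $\e_W'''$ may be taken to vanish), it is enough to treat the continuous case.

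First I would verify hypothesis (a) by writing, for each $1\le i\le d$, the total variation distance $\dtv(\law(W),\law(W+e\uii))$ in the form $\ex[\dtv(\law(W+e\uii\giv X\uja,Z\uja),\law(W\giv X\uja,Z\uja))]$ via the decomposition $W=W\uja+Z\uja$ for a suitably chosen~$\a$, and then applying the first inequality in~\Ref{dtv-assn-cont}. This gives $\e_1$ of order $\e_W'$, up to the usual smoothness corrections. The bulk of the effort will go into hypothesis (b): expanding the Stein operator as
\eqs
(\ABA_\hn h)(W) \Eq \tr(V\D^2 h(W)) - (W-\m)^T\D h(W),
\ens
since $\hn\S=V$, and then, after multiplying by $I[|W-\m|\le\hn\d]$, handling the two terms by local-dependence manipulations.

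For the drift term $\ex\{(W-\m)^T\D h(W)I[|W-\m|\le\hn\d]\}$, I apply the first-order Palm formula to write it as an integral over~$\a$ of $\ex_\a\{X\uja{}^T\D h(W)\}$ minus $\mu^T \ex\{\D h(W)I\}$. Substituting $W=W\uja+Z\uja$ and Taylor-expanding $\D h(W)$ around $W\uja$ to first order, the zeroth-order piece couples to~$\m\uja$ via the weak-dependence coefficients $\ch_1'$ and (after removing the truncation through $\ch_3'$) contributes an error that is accounted for by those two quantities plus $\e_W'''$; the first-order piece in $Z\uja=\int_{D_\a}\tX\ujkab\,\Xi(d\b)$ is transformed by a second Palm application using~\Ref{secondPalmid} and the further decomposition $W\uja=W\ujkab+Z\ujkab$. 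Matching this against $\tr(V\D^2h(W))$, writing the covariance via the same Palm machinery, and bounding all remainders by third-moment sums $H_0',H_1',H_2'$ produces the required inequality with $\e_{20}$, $\e_{21}$, $\e_{22}$ of the stated orders. The $\e_W'$ factor enters whenever a difference of $\D h$ or $\D^2 h$ across a one-step shift in~$W\uja$ or $W\ujkab$ is replaced by $\e_W'$ times a supremum of $h$, using the conditional smoothness bounds in~\Ref{dtv-assn-cont}.

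The main obstacle will be bookkeeping across the three different conditional probability measures $\pr$, $\pr_\a$ and $\pr_{\a\b}$ while maintaining the truncation $I[|W-\m|\le\hn\d]$. The indicator must be transferred between $W$, $W\uja$ and $W\ujkab$ using~\Ref{Zhat-def-cont} together with Markov's inequality, but this transfer also involves switching the laws under which these vectors are distributed; this is where the additional parameters $\e_W''$ (controlling total variation between $\law_\a$ and $\law$, resp.\ $\law_{\a\b}$ and $\law$, of $W\uja$ and $W\ujkab$) and $\e_W'''$ (controlling the shift between $\ex_\a W\uja$ and $\ex W\uja$) are consumed, explaining the final two terms in the statement. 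Once these smoothness corrections are absorbed, collecting the orders in $d$, $\hn$ and applying Theorem~\ref{ADB-DN-approx-thm} gives the advertised bound, with the $d^3\log\hn$ factor coming from the $d^4\log\hn$ of Theorem~\ref{ADB-DN-approx-thm} after a factor $d^{-1}$ is saved by the normalizations embedded in the definitions of $\ch'$ and $H'$.
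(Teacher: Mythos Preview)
Your outline is essentially the paper's own argument: verify (a) of Theorem~\ref{ADB-DN-approx-thm} from the first line of~\Ref{dtv-assn-cont}, then establish (b) by expanding $(W-\m)^T\D h(W)$ through the Campbell and second-order Palm formulae, replacing $\D h(Z\uja+W\uja)-\D h(W\uja)$ by $\D^2 h(W\uja)Z\uja$, and then $\D^2 h(W\uja)$ by $\D^2 h(W\ujkab)$, with the errors absorbed into the $H'$-sums via~\Ref{Zhat-def-cont} and the $\ch'$-coefficients, and with $\e_W''$, $\e_W'''$ entering exactly where you say.

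Two technical points deserve more care than your sketch gives them. First, Condition~(b) of Theorem~\ref{ADB-DN-approx-thm} bounds $h$ and its differences only on balls of radius $3\hn\d_0/2$, so before any of the manipulations you describe can begin you need to extend~$h$ beyond that ball while keeping $\|\D h\|_\infty$ controlled; the paper does this via Lemma~\ref{h-extension}, and without it the step~\Ref{Step-1} removing the indicator from the drift term has no force. Second, your remark that the $d^3\log\hn$ arises from ``saving a factor $d^{-1}$'' from the $d^4\log\hn$ of Theorem~\ref{ADB-DN-approx-thm} is not quite right: in that theorem the factor~$d^4$ multiplies only $\hn^{-1/2}+\e_1$, while $\e_{20},\e_{21},\e_{22}$ carry no extra $d$-power; the $d^3$ in the present statement comes from tracking the explicit powers of~$d$ in the individual error estimates (e.g.\ \Ref{Step-6a}, \Ref{Step-9}, \Ref{Step-10a}).
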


\section{Intersection graph dependence}\label{independence}
\setcounter{equation}{0}

In this section, we consider sums $W := \sjn X\uj$ of random vectors~$X\uj$ that are determined 
by the values of an underlying collection of independent
random elements $(Y_{{i}},\,1\le {i}\le M)$;  we assume that $X\uj := X\uj((Y_{{i}},\,{i}\in M_j))$, for some
subset $M_j {\subset} [M] := \{1,2,\ldots,M\}$.  The subsets~$M_j$ induce an intersection 
graph~$G$ on~$[n]$, in which
there is an edge between $j$ and~$k\neq j$, $j\sim k$, exactly when $M_j\cap M_k \neq \emptyset$;
we denote by 
$N_j := \{k \in [n]\setminus \{j\}\colon k\sim j\}$ the neighbourhood of~$j$ in~$G$.  With
this definition, $X\uj$ is independent of $(X\uk\colon k\in [n]\setminus(\{j\}\cup N_j))$, 
and the graph~$G$ is a dependency graph in the sense of Baldi \& Rinott~(1989).

In this setting, there is a natural way to define $W\uj$ and~$W\ujk$.
For each $j\in [n]$, we define $Z\uj := X\uj + \sum_{k\sim j} X\uk$ and $W\uj := W - Z\uj$, 
noting that $W\uj$ and~$X\uj$ are independent, so that $\tX\ujk = X\uk$, $k \in N_j \cup\{j\}$, 
and $\ch_1=\ch_3=0$.  Then, for $k = j$, $W\ujk = W\uj$ and $Z\ujk = 0$; otherwise,
for $j\neq k \in [n]$ such that $j\sim k$, we define $W\ujk := \sum_{l \notin N_j \cup N_k}X\ul$
and $Z\ujk := W\uj - W\ujk$; note that $W\ujk$ and the pair $(X\uj,X\uk)$ are independent,
so that $\ch_2=0$ also.   If we also impose some uniformity, by supposing that
\eq\label{third-moments}
   1 \Le \max_{1\le j\le n}d^{-3/2}\ex |X\uj|^3 \ =:\ \g\ <\ \infty,
\en
then we have the following corollary of Theorem~\ref{DN-approx}.

\begin{corollary}\label{cor1}
Suppose that the above assumptions are satisfied.  Define
\[
   D_j \Def |N_j|, \quad\mbox{and}\quad \bdt \Def \hn^{-1}\sjn (D_j+1)^2.
\]
Then 
\[
   \dtv(\law(W),\DN_d(\m,V) \Le C_{\ref{DN-approx}}d^3\log\hn(\hn^{-1/2} + \e_{W})\{d + 3\g\bdt\},
\]
for all $n \ge n_{\ref{DN-approx}}$. 
\end{corollary}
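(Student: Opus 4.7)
The plan is to apply Theorem~\ref{DN-approx} with the decomposition specified immediately before the corollary, showing that independence arising from the intersection graph kills all four dependence coefficients, while the moment assumption~\Ref{third-moments} combined with H\"older and a standard neighbourhood double-counting reduces each $H_i$ to a multiple of $\g\bdt$.

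First I would verify that the dependence coefficients vanish. The random vector $X\uj$ is a function of $(Y_i)_{i\in M_j}$, while $W\uj=\sum_{\ell\notin\{j\}\cup N_j}X\ul$ depends only on the $Y_i$'s with $i\in\bigcup_{\ell\notin\{j\}\cup N_j}M_\ell$. By the very definition of the intersection graph, $M_j$ is disjoint from each such $M_\ell$, so $W\uj$ and $X\uj$ are independent; the same argument, applied to $W\ujk=\sum_{\ell\notin N_j\cup N_k}X\ul$ and the pair $(X\uj,X\uk)$, shows their independence for $j\sim k$. Every conditional expectation appearing in $\ch_{11},\ch_{12},\ch_{13},\ch_{2jk}$ and $\ch_3$ therefore equals its unconditional counterpart, and $\ch_1=\ch_2=\ch_3=0$.

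Next I would control the $H$-quantities using~\Ref{third-moments}. H\"older yields $\ex|X\uj|^p\le d^{p/2}\g^{p/3}$ for $p\in\{1,2,3\}$, $\ex(|X\uj||X\uk|)\le d\g^{2/3}$, $\ex(|X\uj||X\uk|^2)\le d^{3/2}\g$, and $\ex(|X\uj||X\uk||X\ul|)\le d^{3/2}\g$; moreover $|\m\uj|\le d^{1/2}\g^{1/3}$. Cauchy--Schwarz gives $|Z\uj|^2\le(D_j+1)\sum_{k\in\{j\}\cup N_j}|X\uk|^2$, and $|Z\ujk|\le\sum_{\ell\in N_k\setminus(\{j\}\cup N_j)}|X\ul|$ is a sum of at most $D_k$ terms. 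Plugging these in and using the identity
\[
   \sjn\sum_{k\in N_j}D_k \Eq \sum_{k=1}^n D_k\cdot|\{j:j\sim k\}|\Eq \sum_{k=1}^n D_k^2 \Le \hn\bdt,
\]
I get $H_{21}\le 2\g\bdt$ directly from $\sjn(D_j+1)^2=\hn\bdt$, and $H_{22},H_{23},H_{24}$ each reduce to a constant times $\g\hn^{-1}\sum_j\sum_{k\in N_j}D_k\le\g\bdt$, so $H_2\le C\g\bdt$. Similarly $H_0\le\g^{1/3}n/\hn\le\g\bdt$, using $n\le\sjn(D_j+1)^2=\hn\bdt$, and $H_1\le 2\g^{2/3}\hn^{-1}\sjn(D_j+1)\le 2\g\bdt$, whence $\hn^{-1/2}H_1\le 2\g\bdt$.

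Substituting $\ch_1=\ch_2=\ch_3=0$ together with these bounds into Theorem~\ref{DN-approx} gives
\[
   (d+H_2)\e_W+(d+H_0+H_2+\hn^{-1/2}H_1)\hn^{-1/2}\Le C'(d+3\g\bdt)(\e_W+\hn^{-1/2}),
\]
and the corollary follows after absorbing the absolute constant into $C_{\ref{DN-approx}}$. The main obstacle is the combinatorial reshuffling in $H_{22}$--$H_{24}$: the natural bound on $|Z\ujk|$ is indexed by $D_k$, but the outer sum is over $j$, so without the identity $\sum_j\sum_{k\sim j}D_k=\sum_k D_k^2$ one would be forced to a much worse bound involving $\max_j D_j^3$ or $\sum_j D_j D_{\max}^{(j)}$ rather than the average second moment $\bdt$.
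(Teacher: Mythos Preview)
Your proposal is correct and follows the same route as the paper: show that the intersection-graph independence forces $\ch_1=\ch_2=\ch_3=0$, bound each $H_i$ by a constant multiple of $\g\bdt$ via the third-moment hypothesis~\Ref{third-moments} and H\"older, and substitute into Theorem~\ref{DN-approx}. The paper's own proof compresses all of this into two observations---$H_1\le\half(H_0+H_2)$ and $\max\{H_0,H_2\}\le\g\bdt$---which recover the constant~$3$ on the nose without any absorption into~$C_{\ref{DN-approx}}$; your direct term-by-term bounds (including the double-counting identity $\sjn\sum_{k\in N_j}D_k=\sum_k D_k^2$ for $H_{22},H_{23}$) pick up a harmless extra factor of~$2$ in a couple of places, but the argument is otherwise identical.
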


\begin{proof}
All that is needed is to observe that $H_1 \le \half(H_0+H_2)$, and that
$\max\{H_0,H_2\} \le \g\bdt$. \ep
\end{proof}

The main difficulty in applying the bounds in Theorem~\ref{DN-approx} and Corollary~\ref{cor1} is
putting a value to~$\e_{W}$.  This can nonetheless often be dealt with, provided that enough of the
underlying random variables~$(Y_l,\,l\in[M])$ each influence rather few of the~$X\uj$.  The next
theorem gives a way of exploiting this.

Given any $l \in [M]$, define $L_l := \{j\in[n]\colon M_j \ni l\}$,
and $S_l := \sum_{j \in L_l} X\uj$; write $\gg_l := \s(Y_{l'},\,l'\in [M]\setminus \{l\})$,
and define
\[
    d_l\uii(Y) \Def \dtv(\law(S_l \giv \gg_l), \law(S_l + e\uii \giv \gg_l)),\quad 1\le i\le d.
\]
Given any $j \ne k \in [n]$ such that $j\sim k$, define
\[
   M\ujk \Def \bigcup_{j' \in N_j \cup N_k} M_{j'},
\]
and find $l_1<l_2<\cdots<l_s \in [M]\setminus M\ujk$ such that
$L_{l_r} \cap L_{l_{r'}} = \emptyset$ for all $1 \le r < r' \le s$.  Then the vectors 
$S_{l_1},\ldots,S_{l_s}$ are conditionally 
independent, given $\ff\ujk := \s\bigl(Y_l,\, l\notin \{l_1,\ldots,l_s\}\bigr)$.
Write 
\[
    D_i\ujk(Y) \Def \sum_{r=1}^s (1 - d_{l_r}\uii(Y)).
\]

\begin{theorem}\label{smoothness}
Suppose that, for $j \ne k \in [n]$ such that $j\sim k$, we can find~$s$ and
$l_1<l_2<\cdots<l_s \in [M]\setminus M\ujk$ such that the sets~$L_{l_r}$, $1\le r\le s$,
are disjoint, and such that
\[
   \pr[D_i\ujk(Y) \le T] \Le \h.
\]
Then 
\[
  \dtv\bigl(\law(W\ujk + e\uii \giv \{X\ur,\,r\in N_j \cup N_k\}),
                   \law(W\ujk \giv \{X\ur,\,r\in N_j \cup N_k\}) \bigr) \Le {\Bl\frac2{\pi T}\Br^{1/2}} + \h.
\]                              
\end{theorem}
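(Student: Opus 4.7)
The strategy is to condition on the coarser $\sigma$-algebra $\ff\ujk = \s(Y_l\colon l \notin\{l_1,\ldots,l_s\})$ and exploit the conditional independence of $S_{l_1},\ldots,S_{l_s}$, then apply a standard smoothing (Mineka-type) inequality for sums of independent integer-valued summands, and finally split on whether $D_i\ujk(Y)$ is large or small.

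First I would verify the inclusion $\s(X\ur,\,r\in N_j\cup N_k) \subset \ff\ujk$. Each $X\ur$, $r\in N_j\cup N_k$, is a function of $(Y_l\colon l\in M_r)$, so all of these vectors are measurable with respect to $\s(Y_l\colon l\in M\ujk)$; since $l_1,\ldots,l_s\in [M]\setminus M\ujk$, this latter $\s$-field sits inside $\ff\ujk$. By the standard convexity of total variation distance under averaging, it therefore suffices to bound
\[
\ex\bigl[\dtv\bigl(\law(W\ujk+e\uii\giv\ff\ujk),\law(W\ujk\giv\ff\ujk)\bigr)\giv \{X\ur\colon r\in N_j\cup N_k\}\bigr].
\]
Since the expectation of this conditional bound is itself bounded by the unconditional expectation, it is enough to prove $\ex\min\{1,\sqrt{2/(\pi D_i\ujk(Y))}\}\Le\sqrt{2/(\pi T)}+\h$.

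Next I would set up the conditional decomposition. Because the sets $L_{l_1},\ldots,L_{l_s}$ are disjoint subsets of $[n]\setminus(N_j\cup N_k)$ (the latter following from $l_r\notin M\ujk$ together with $j\in N_k$, $k\in N_j$), we may write
\[
W\ujk \Eq T + \sum_{r=1}^s S_{l_r},\quad\text{where}\quad T \Def \sum_{l\in[n]\setminus(N_j\cup N_k\cup\bigcup_r L_{l_r})} X\ul,
\]
and $T$ is $\ff\ujk$-measurable, since $X\ul$ depends on $Y_{l_r}$ only through $l\in L_{l_r}$. The key point is that the disjointness of the $L_{l_r}$ forces $\{l_1,\ldots,l_s\}\cap(\bigcup_{j'\in L_{l_r}}M_{j'})=\{l_r\}$, so conditionally on $\ff\ujk$ each $S_{l_r}$ is a function of $Y_{l_r}$ alone, and the $S_{l_r}$ are independent. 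Moreover, conditional on $\ff\ujk$ the conditional law of $S_{l_r}$ agrees with its law given $\gg_{l_r}$, so $\dtv(\law(S_{l_r}\giv\ff\ujk),\law(S_{l_r}+e\uii\giv\ff\ujk)) = d_{l_r}\uii(Y)$ almost surely.

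The hard step is the smoothing inequality that converts conditional independence plus individual shift-smoothness into shift-smoothness for the sum. Specifically, for independent integer-valued vectors $U_1,\ldots,U_s$ with $d_r := \dtv(\law(U_r),\law(U_r+e\uii))$, one has the Mineka-type bound
\[
\dtv\Bl\law\Bl\sjn[s]U_r\Br,\law\Bl\sjn[s]U_r+e\uii\Br\Br \Le \sqrt{\frac{2}{\pi\sjn[s](1-d_r)}},
\]
a classical Fourier/coupling estimate (cf.\ Mattner \& Roos 2007; \BLX\ 2018b). Applied conditionally on $\ff\ujk$ with $U_r=S_{l_r}$, and noting that $T$ is $\ff\ujk$-measurable so it cancels, this yields
\[
\dtv\bigl(\law(W\ujk\giv\ff\ujk),\law(W\ujk+e\uii\giv\ff\ujk)\bigr) \Le \sqrt{\frac{2}{\pi D_i\ujk(Y)}}\quad\text{a.s.}
\]

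To finish, I would split on the event $\{D_i\ujk(Y)\le T\}$: on its complement the displayed bound is at most $\sqrt{2/(\pi T)}$, and on the event itself we use the trivial bound $\dtv\le 1$ together with the hypothesis $\pr[D_i\ujk(Y)\le T]\le\h$, giving the claimed $\sqrt{2/(\pi T)}+\h$. The principal obstacle is invoking the right smoothing inequality with the explicit constant $\sqrt{2/\pi}$; all other steps are bookkeeping about measurability and disjointness of the index sets.
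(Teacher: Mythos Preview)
Your proposal is correct and follows essentially the same route as the paper: condition on the larger $\sigma$-field $\ff\ujk$, reduce to the sum $U\ujk=\sum_r S_{l_r}$ (since the remainder of $W\ujk$ is $\ff\ujk$-measurable), apply the Mineka-type bound with the Mattner--Roos constant to get $\dtv\le\sqrt{2/(\pi D_i\ujk(Y))}$, and split on $\{D_i\ujk(Y)\le T\}$. Your version is in fact more carefully written than the paper's terse proof, which leaves the measurability checks and the final splitting implicit; one cosmetic point is that you use the symbol $T$ both for the threshold in the hypothesis and for the $\ff\ujk$-measurable part of $W\ujk$, which you should rename to avoid confusion.
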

 
\begin{proof}
Writing $U\ujk := \sum_{r=1}^s S_{l_r}$, we have
\eqs
   \lefteqn{ \dtv\bigl(\law(W\ujk + e\uii \giv \{X\ur,\,r\in N_j \cup N_k\}),
                              \law(W\ujk \giv \{X\ur,\,r\in N_j \cup N_k\}) \bigr)} \\
  &&\Le \ex\bigl\{\dtv\bigl(\law(W\ujk + e\uii \giv \ff\ujk), \law(W\ujk \giv \ff\ujk) \bigr)  \bigr\} \\
  &&\Le \ex\bigl\{\dtv\bigl(\law(U\ujk + e\uii \giv \ff\ujk), \law(U\ujk \giv \ff\ujk) \bigr)  \bigr\} .                           
\ens
Now, by the Mineka coupling argument (Lindvall, 2002, Section~II.14),
\[
   \dtv\bigl(\law(U\ujk + e\uii \giv \ff\ujk), \law(U\ujk \giv \ff\ujk) \bigr) 
               \Le \Bl\frac2{\pi D_i\ujk(Y)}\Br^{1/2},
\] 
{where the constant comes from Mattner  \& Roos~(2007), Corollary~1.6,}
and the theorem follows. \ep
\end{proof}

\section{Examples}\label{examples}
\setcounter{equation}{0}

{In this section, we demonstrate that Theorems~\ref{DN-approx} and \ref{DN-approx-cont} can be easily 
applied in a range of situations. The first three examples are discrete sums, and Theorem~\ref{DN-approx} 
can be invoked. In the last example, we need Theorem~\ref{DN-approx-cont}.}

\subsection{Graph colouring}\label{colouring}
As a first example, suppose that the vertices in a graph~$G := ([M],E)$  are coloured
independently, with colour~$i$ being chosen with probability~$\p_i$, $1\le i\le d$.  
Let $Y_l$ be the colour of vertex~$l$, and
let~$W_i$ denote the number of edges of~$G$ that connect two vertices of colour~$i$;
write $W := (W_1,\ldots,W_d)^{{T}}$.  Then~$n := |E|$
is the number of edges in~$G$, and, for $j,k\in E$, $j \sim k$ if $j$ and~$k$ share a common vertex.
For $l\in [M]$, let $\d_l$ denote the degree of $l$ in~$G$; then, for $j := \{l,l'\} \in E$,
$D_j := |N_j| = {\d_l + \d_{l'}}$.   Define 
\[
    \tbd \Def n^{-1} \sum_{j\in E}D_j,\quad \tbdt \Def n^{-1} \sum_{j\in E}D_j^2.
\]
Then it is easy to compute
\eqs
   \m_i &=& \ex W_i \Eq n\p_i^2;\quad V_{ii} \Eq \var W_i \Eq n\{\pi_i^2(1-\p_i^2) + \tbd \p_i^3(1-\p_i)\};\\
    V_{ii'} &=& \cov(W_i,W_{i'}) \Eq -n\pi_i^2 \pi_{i'}^2(1+\tbd), \quad i\ne i'.
\ens
Thus $\tr V = nd\{c_1 + \tbd c_2\}$, where 
\[
     c_1 \Def d^{-1}\sum_{i=1}^d \pi_i^2(1-\p_i^2);\quad c_2 \Def d^{-1}\sum_{i=1}^d \pi_i^3(1-\p_i),
\]
so that we take $\hn := \lceil n(c_1 + \tbd c_2)\rceil$ in Corollary~\ref{cor1}.  We can clearly take 
$\g = 1$ also, and, for fixed~$d$ and $\p_1,\ldots,\p_d$, this yields a bound  
\[
     \dtv(\law(W),\DN_d(\m,V)) \Eq O\bigl\{(\hn^{-1/2} + \e_{W})\log \hn\, (1 + \tbdt/\tbd)\bigr\},
\]
which relies on having a reasonable bound for~$\e_{W}$.  

In order to apply Theorem~\ref{smoothness}, for each $j\sim k \in E$, we want
first to find~$s$ and $l_1,l_2,\ldots,l_s \in [M]\setminus\{M_j \cup M_k\}$
such that the sets~$L_{l_1},\ldots,L_{l_s}$ are disjoint.  Now 
$L_l = \bigl\{\{l,l'\}\colon l'\in[M], \{l,l'\} \in E\bigr\}$, so that $|L_l| = \d_l$,
and $L_l \cap L_{l'} \ne \emptyset$ exactly when $\{l,l'\} \in E$. Thus we
need to find a set of vertices $l_1,\ldots,l_s$ subtending no edges of~$G$ 
({\it independent\/} in the graph theoretical sense).  Letting~${\d^\ast}:= \max_l \d_l$, we note that
$|[M]\setminus\{M_j \cup M_k\}| \ge M-3{\d^\ast}$, and that we can thus always take
$s \ge s(M,{\d^\ast}) := \lfloor M/({\d^\ast}+1) \rfloor - 3$.  

The next step is to bound $d_l\uii(Y) = d_l\uii(\{Y_{l'},\,\{l,l'\} \in E\})$, for each $1\le i\le d$ 
and for any~$l$. To do so, let $R_{il} :=  \sum_{l'\colon\{l,l'\}\in E} I[Y_{l'}=i]$ be the number
of neighbours of~$l$ in~$G$ that have colour~$i$.  Then~$S_l$ takes one of the values
$R_{1l}e\ui,\ldots,R_{dl}e\ud \in \Z^d$, with conditional probabilities $\p_1,\ldots,\p_d$.
Hence, if $R_{il} = 1$ and $R_{i'l} = 0$ for some $i' \ne i$, then $S_l = e\uii$ with 
conditional probability~$\p_i$, and $S_l = 0$ with conditional probability at least~$\p_{i'}$,
giving $d_l\uii \le 1 - \min\{\p_i,\p_{i'}\}$.  Hence, for any $i' \ne i$, we have
\eqs
    \sum_{r=1}^s (1 - d_{l_r}\uii) &\ge&  \sum_{r=1}^s  I[R_{i,l_r} = 1, R_{i',l_r} = 0] \min\{\p_i,\p_{i'}\} \\
                                   &=:& \min\{\p_i,\p_{i'}\} \hR(i,i').      
\ens
Now, if $\d_l = t$,
\[
   \pr[R_{il} = 1, R_{i'l} = 0] \Eq h(t,i,i') \Def t\p_i (1-\pi_i-\p_{i'})^{t-1},
\]
giving $\ex \hR(i,i') = \sum_{r=1}^s h(\d_{l_r},i,i') \ge s \hmin(i,i')$,
where $\hmin(i,i') := \min_{1\le t\le {\d^\ast}} h(t,i,i')$.  Then, since
the events $\{R_{i,l_r} = 1, R_{i',l_r} = 0\}$ and $\{R_{i,l_{r'}} = 1, R_{i',l_{r'}} = 0\}$
are independent unless there is a path of length~$2$ connecting $l_r$ and~$l_{r'}$, we have
\[
  \var(\hR(i,i')) \Le \ex \hR(i,i') (1 +  {\d^\ast}({\d^\ast}-1)).
\]
Hence, by Chebyshev's inequality,
\[
   \pr[\hR(i,i') \le \half s(M,{\d^\ast}) \hmin(i,i')] \Le \frac{4(1 +  {\d^\ast}({\d^\ast}-1))}{\ex \hR(i,i')}
                    \Le \frac{4{\d^\ast}^2}{s(M,{\d^\ast})\hmin(i,i')}.
\]
Thus we can take 
\[
   T \Eq \half s(M,{\d^\ast}) \min\{\p_i,\p_{i'}\} \hmin(i,i')\quad\mbox{and}\quad \h \Eq \frac{4{\d^\ast}^2}{s(M,{\d^\ast})\hmin(i,i')}
\]
in Theorem~\ref{smoothness}.  If, as $M \to \infty$, $n \ge cM$ for some $c>0$ and~${\d^\ast}$ remains bounded,
with the colour probabilities remaining constant, this gives $\e_{W} = O(M^{-1/2})$,
and so
\[
     \dtv(\law(W),\DN_d(\m,V)) \Eq O\bigl\{ M^{-1/2}\log M \bigr\}.
\]
The order in~$M$ is the same as is obtained,
in the context of {$\d^\ast$}-regular graphs and using the convex sets metric, by Rinott \& Rotar~(1996).

Note that, if most of the degrees in~$G$ become large as~$M$ increases,  $\hmin(i,i')$ may well converge 
to zero too fast for the bound on~$\e_{W}$ to be useful, and more sophisticated arguments would be needed. 
Note also that, for $d=2$, $h(t,i,i') = h(t,1,2) = 0$ for all $t\ne 1$, because $\p_1+\p_2=1$,
and we obtain no bound on~$\e_{W}$ in this way.  
Indeed, if~$G$ is an {$\d^\ast$}-regular graph and $d=2$, $\e_{W}$ is {\it not\/} small, since the distribution of~$W$ is
concentrated on a sub-lattice of~$\Z^2$ if ${\d^\ast} \ge 2$, and $\law(W)$ is no longer close to $DN_d(\m,V)$ 
in total variation; see \BLX~(2018b, Section 4.2.1).

The problem can be modified, by only counting a random subset of monochrome edges.  Let $(\tY_j,\,j\in E)$
be independent $\Be(p)$ random variables, and define $\tW_i := \sum_{j\in E}\tY_j X\uj$, where~$X\uj$ is as
before.  Then 
\eqs  
    \tm &:=& \ex \tW \Eq p\m; \quad  \tV_{ii} \Def \var(\tW_{ii}) \Eq n\{p\p_i^2(1-p\p_i^2) + \tbd p^2\p_i^3(1-\p_i)\};\\
  \tV_{ii'} &:=& \cov(\tW_i,\tW_i') \Eq -np^2\p_i^2\p_{i'}^2(1+\tbd), 
\ens
giving $\hn = \lceil nd\{pc_1 + p(1-p)c_1' + p^2 \tbd c_2\}\rceil$, where $c_1' := d^{-1}\sum_{i=1}^d \p_i^4$.  As before,
for fixed~$d$ and $\p_1,\ldots,\p_d$, this yields
\[
     \dtv(\law(\tW),\DN_d(\tm,\tV)) \Eq O\bigl\{(\hn^{-1/2} + \te_n)(1 + \tbdt{/\tbd})\log \hn \bigr\}.
\]
However, the quantity~$\te_n$ is rather easier to bound than~$\e_{W}$, since we can take the independent random
variables~$(\tY_j,\,j\in E)$ to use in Theorem~\ref{smoothness}, each of which influences only the
corresponding~$X\uj$.  Conditional on the colours, $\gg := \s(Y_l,\,l\in[M])$, we have
\[
     \tW_i \Eq \sum_{j = \{l,l'\}\in E} I[Y_l=Y_{l'}=i]\, \tY_j \ \sim\ \Bi(W_i,p),
\]
with $\tW_1,\ldots,\tW_d$ conditionally independent,
and hence $\dtv(\law(\tW\giv\gg),\law(\tW+e\uii\giv\gg)) \le 1/\sqrt{pW_i}$.
Using the moments of~$W$ calculated above, it follows easily that $\te_n = O(\{np\}^{-1/2})$,
giving
\eq\label{GC-order-2}
     \dtv(\law(\tW),\DN_d(\tm,\tV)) \Eq O\bigl\{(Mp\tbd)^{-1/2}(1 + \tbdt{/\tbd})\log M \bigr\}.
\en

The apparent order in~$\tbd$ is misleading here. 
If~$\tbd$ is large, the covariance matrix~$V$ is ill conditioned, since 
$\tr V \asymp n\tbd \asymp M\tbd^2$,
whereas $\var\bigl\{\sum_{i=1}^d \p_i^{-1}W_i\bigr\} = n(d-1) \asymp  M\tbd$. Thus the condition number~$\r(V)$
grows like~$\tbd$, and~$\r(V)$ enters the constant~$C_{\ref{DN-approx}}$ implied in the order symbol
in~\Ref{GC-order-2}.    However, if only the joint distribution of, say, $(\tW_1,\ldots,\tW_{d-1})^{{T}}$ is of interest, 
the corresponding covariance matrix then has condition number that is bounded in~$\tbd$, for fixed $d$ 
and~$\p_1,\ldots,\p_d > 0$, and the orders in both $M$ and~$\tbd$ are as in~\Ref{GC-order-2}.

   A more general modification, in the same spirit, it to choose $(\tY_j\uii,\,j\in E, 1\le i\le d)$
to be any independent integer valued random variables, with distributions depending only on~$i$,
and to set $\tW_i := \sum_{j= \{l,l'\}\in E}I[Y_l=Y_{l'}=i]\tY_j\uii$.
Then, if the mean and variance of~$\tY_1\uii$ are {$\tmm\uii$ and~$\tv\uii$,} we have
\eqs  
    \tm_i &:=& \ex \tW_i \Eq n\p_i^2{\tmm\uii}; \quad  \tV_{ii} \Def \var(\tW_{ii}) 
                 \Eq n\p_i^2\bigl\{{v\uii + (\tmm\uii)^2}\{ 1-\p_i^2 + \tbd \p_i(1-\p_i)\} \bigr\};\\
  \tV_{ii'} &:=& \cov(\tW_i,{\tW_{i'}}) \Eq -n\p_i^2\p_{i'}^2 {\tmm\uii \tmm\uid} (1+\tbd), 
\ens
from which the corresponding value of~$\hn$ can be deduced.  As above, it is not difficult
to show that
\[
  \dtv(\law(\tW_i\giv\gg),\law(\tW_i+1\giv\gg)) \Eq O(1/\sqrt{u\uii W_i}),
\]
where $u\uii := 1 - \dtv(\tY_1\uii, \tY_1\uii+1)$, from which it follows that~$\te_n = O((M\tbd)^{-1/2})$.
Hence we find from Corollary~\ref{cor1} that
\eq\label{GC-order-3}
     \dtv(\law(\tW),\DN_d(\tm,\tV)) \Eq O\bigl\{(Mp\tbd)^{-1/2}\g(1 + \tbdt)\log M \bigr\},
\en
where $\g := \max_{1\le i\le d}\ex |\tY_1\uii|^3$.

\subsection{Random geometric graphs}\label{RGG}
Let $M := n^2$ points be distributed uniformly and independently over the 
torus $T_n := [0,n]\times[0,n]$.
For some fixed~$r$, join all pairs of points whose distance apart is less than or equal to~$r$.
This yields a particular example of a random geometric graph;  the book by Penrose~(2003)
discusses much more general models, and gives a comprehensive treatment of their properties.
In this section, we illustrate the application of Theorem~\ref{DN-approx} to counting
induced triangles and $2$-stars;  more complicated examples can be treated in much the
same way.   If the positions of the points are denoted by $(Y_l,\,1\le l\le M)$,
we express our statistic as
\[
    W \Def (W_1,W_2)^{{T}}:\quad W_i \Def \sum_{j \in [M]_3} I[G_j = G\uii],
\]
where $[M]_3$ denotes the set of $3$-subsets of~$[M]$, $G_j := G(Y_{j_1},Y_{j_2},Y_{j_3})$ denotes the 
induced graph on the points $Y_{j_1},Y_{j_2},Y_{j_3}$,  $G\ui$ denotes the triangle and~$G\ut$ 
denotes the $2$-star.

For any~$x \in T_n$, 
the probability that any given point lies in the circle of radius~$r$ around~$x$
is $\p r^2/n^2 =: n^{-2}\hpr$.  Hence
$\pr[G_j = G\uii] = n^{-4}p_r\uii$ is the same for all $j \in [M]_3$, and $p_r\ui,p_r\ut \le \hpr^2$.
The quantities $G_j$ and~$G_k$ are independent unless $j$ and~$k$ have at least two of their
vertices in common, $G_j$ is independent of the set $(G_k\colon j\cap k = \emptyset)$,
and the pair $(G_j,G_{j'})$ is independent of the set $(G_k\colon (j\cup j')\cap k = \emptyset)$.
Using these facts, we can make some computations:
\eqs
  \m &:=& \ex W \Eq \binom{M}3 n^{-4}(p_r\ui,p_r\ut)^T \ \sim\ \third n^2 (p_r\ui,p_r\ut)^T;\\
  V_{ii} &:=& \var W_i \ \sim\  c_{ii}n^2;\quad V_{12} \Def \cov(W_1,W_2)\ \sim\  c_{12}n^2,
\ens
and the matrix $\Bigl(\begin{matrix} c_{11}& c_{12}\\c_{21} & c_{22}\end{matrix}\Bigr)$ is non-singular,
with values involving the geometry of intersections of discs in~$\re^2$.  Thus we can take $\hn = cn^2$ 
for some $c > 0$.
The quantities~$H_0$ and~$H_2$ are then easily bounded:
\eqs
    H_0 &\le&  \frac1{cn^{2}\sqrt2}\binom{M}3 \, 2n^{-4} (p_r\ui + p_r\ut)\ \asymp\ 1;\\
    H_2 &\le&  \frac{c'}{cn^{2}} \binom{M}3 \,  n^{-4} \hpr^2 \{1 + (Mn^{-2}\hpr)^4\} \ \asymp\ 1,
\ens
giving
\[
    \dtv(\law(W),\DN_2(\m,V)) \Eq O((n^{-1} + \e_{W})\log n).
\]
It thus remains to bound~$\e_{W}$.  

To do so, break up $[0,n]^2$ into $9\lfloor n/3r \rfloor^2$
non-overlapping $r\times r$ squares, denoted by $Q_{l,l'} := [(l-1)r,lr)\times [(l'-1),l'r)$.  Then
there can be no triangles or $2$-stars with points in two of the squares in 
$\QQ := (Q_{3l,3l'}, 1\le l,l' \le \lfloor n/3r \rfloor)$,
because points in two of them are more than $2r$ apart.  Consider evaluating 
$\dtv(\law(W+\eii \giv \gg), \law(W \giv \gg))$, much as for Theorem~\ref{smoothness},
where~$\gg$ consists of the positions of all points not in members of~$\QQ$, together with
the {\it numbers\/} of points falling in each member of~$\QQ$.  If~$S_{l,l'}$ denotes the contribution 
resulting from assigning positions to the points in~$Q_{3l,3l'}$, then the random variables
$(S_{l,l'},\,1\le l,l' \le \lfloor n/3r \rfloor)$ are conditionally independent, given~$\gg$.
Let~$N(A)$ denote the number of points falling in the set~$A \subset T_n$.
Then the event~$E_{l,l'}$ that $N(Q_{3l,3l'})=1$,
that the rectangle~$[(3l+0.25)r,(3l+0.5)r)\times [(3l'-1)r,3l'r)$ 
contains two points at a distance between $r/2$ and~$r$ from one another, and that $N(U_{l,l'}) = 3$,
where~$U_{l,l'}$ is the union of $(Q_{r,s},\,l-2 \le r \le l+2, l'-2 \le s \le l'+2)$, 
is such that $\pr[E_{l,l'}] \asymp 1$ as $\nti$, and is the same for all~$l,l'$.  Indeed, we have
\[
   \pr[E_{l,l'}] \Eq \ch \pr[N(U_{1,1}) = 3],
\]
for a constant~$\ch > 0$ that is independent of~$n$ also.  Conditional on~$E_{l,l'}$,  we have
\[
    1 - \dtv\bigl(\law(S_{l,l'} \giv E_{l,l'}),\law(S_{l,l'} + \eii \giv E_{l,l'})\bigr) \Eq u_i ,\quad i=1,2,
\]
for $u_1,u_2 > 0$.  Now the events $(E_{l,l'},\,1\le l,l' \le \lfloor n/3r \rfloor)$ are not independent,
but, except for neighbouring pairs of indices, they are only weakly dependent: for $r,r'$ such that
$\max\{|r-l|,|r'-l'| \ge 2\}$, we have
\eqs
    \pr[E_{l,l'},E_{r,r'}] &=& \ch^2 \pr[\{N(U_{l,l'}) = 3\}\cap \{N(U_{r,r'}) = 3\}] \\
              &=& (\ch \pr[N(U_{1,1}) = 3])^2 \Bi(n^2,25r^2/n^2)\{3\}\,\Bi(n^2-3,25r^2/(n^2-25r^2))\{3\} \\
              &=&  \pr[E_{l,l'}]\pr[E_{r,r'}]\{1 + O(n^{-2})\}.
\ens
Hence
\[
   \ex\Bigl\{\sum_{l,l'} I[E_{l,l'}] \Bigr\} \Eq \lfloor n/3r \rfloor^2 \ch \pr[N(U_{1,1}) = 3];\quad
    \var\Bigl\{\sum_{l,l'} I[E_{l,l'}] \Bigr\} \Eq O(n^2),  
\]
and calculations as for Theorem~\ref{smoothness} now easily yield $\e_{W} = O(n^{-1})$.  Hence it follows that
\[
    \dtv(\law(W),\DN_2(\m,V)) \Eq O(n^{-1}\log n).
\]
 
The asymptotics of~$\e_{W}$ are, however, sensitive to the choice of~$r$:  if $r = r_n \to \infty$,
even logarithmically in~$n$, $\pr[N(U_{1,1}) = 3]$ becomes very small, and the bound on~$\e_{W}$ derived
in this way is no longer useful.

\subsection{Finite Markov chains}\label{MCs}
Let~$(Z_j,\,j\ge0)$ be an irreducible, aperiodic Markov chain on the finite state space $\{0,1,\ldots,d\}$, and set
$X\uj := (I[Z_j=1],\ldots,I[Z_j=d])^{{T}}$.
Let $W_{n} := \sum_{j=1}^n X\uj$ denote the vector of the amounts of time spent in the states $1\le i\le d$
between times $1$ and~$n$.
We are interested in the accuracy of approximating the distribution of~$W_n$ 
by $\DN_d(\m_n,V_n)$, where $\m_n := \ex W_n$ and $V_n := \cov W_n$; translated Poisson approximation
for each component~$W_{in}$ separately can be shown to be accurate to order~$O(n^{-1/2})$ using the
results of Barbour \& Lindvall~(2006).  In a Markov chain, 
the dependence between the states at different times never completely disappears, so we shall need to
make use of the dependence coefficients~$\ch_l$, $1\le l\le 3$.  We make the following simplifying assumption:

\medskip
   {\sl Assumption~A1:}  $\pr[Z_1 = i \giv Z_0 = i] \ >\ 0$ for all $0\le i\le d$.

\medskip
Clearly, a local decomposition in which 
$$
   Z\uj \Def \sum_{|l-j| \le m_n}X\ul \andd Z\ujk \Def \sum_{\max\{|l-j|,|l-k|\}\le m_n}X\ul
$$ 
is likely to be effective, if~$m_n$ is suitably chosen.
Because a finite state irreducible {aperiodic} Markov chain is geometrically ergodic, there 
exist $0 < \r < 1$ and $C < \infty$ such that, for all $0\le i,r\le d$, we have
\eq\label{MC-geometric}
        { |P_{ir}\uk - \p_r| \Le C\r^k, }  
\en
where {$P_{ir}\uk := \pr[Z_k=r \giv Z_0=i]$}, and $\p_r := \lim_{n\to\infty} \pr[Z_n=r \giv Z_0=i]$.
It is then easy to deduce that, as $n\to\infty$,  
\eqa  
     n^{-1}\m_n &\sim& (\p_1,\ldots,\p_d)^T\ =:\ \bpi; \label{MC-variance-asymp} \\
     n^{-1}V_{ir;n} &\sim& \Bigl\{\p_i{\sum_{k\ge1}(P_{ir}\uk - \p_r)} + \p_r{\sum_{k\ge1}(P_{ri}\uk - \p_i)}
                               + \d_{ir} - \p_i\p_r \Bigr\} \ =:\ V_{ir}, \non
\ena
for $1\le i,r \le d$. Now, from~\Ref{MC-geometric}, uniformly in $i,r,s,q$,
\[
    \pr[Z_0=i,Z_j=r,Z_{j+{k}}=s \giv Z_0=i,Z_{j+{k}}=s]
       \Eq \frac{\pr[Z_0=i]P_{ir}\uj {P_{rq}\uk}}{\pr[Z_0=i]P_{iq}^{(j+{k})}} \Eq \p_r\bigl(1 + O(\r^{(j\wedge {k})})\bigr),
\]
and
\eqs
   \lefteqn{\pr[Z_0=i,Z_j=r,Z_{j+l}=s,Z_{j+l+{k}}=q \giv Z_0=i,Z_{j+l+{k}}=q]}\\
     &&\Eq  \frac{\pr[Z_0=i]P_{ir}\uj P_{rs}\ul {P_{sq}\uk}}{\pr[Z_0=i]P_{iq}^{(j+l+{k})}} 
         \Eq \p_r P_{rs}\ul\bigl(1 + O(\r^{(j\wedge {k})})\bigr),
\ens
so that all the dependence coefficients~$\ch_l$, $1\le l\le 3$, in~\Ref{RR-csts} are of order $O(\r^{m_n})$.
It is also immediate, because indicators are bounded random variables, that 
$H_0 = O(1)$, $H_1 = O(m_n)$ and~$H_2 = O(m_n^2)$.  It thus remains to consider the quantity~$\e_{W}$
of~\Ref{dtv-assn}.

Assuming that $2m_n \le n/4$, it is enough to bound
\eq\label{MC-dtv-1}
    \dtv\Bigl(\law_{r}\Bigl(\sum_{j=1}^{l} X\uj + e\uii\Bigr),
                  \law_{r}\Bigl(\sum_{j=1}^{l} X\uj \Bigr)\Bigr),
\en
for any $l \ge \lfloor n/4 \rfloor$ and $0\le r\le d$, where $\law_{r}$ stands for the distribution given the initial state of the Markov chain is at $r$.  This is because, for $1 \le j\le n/2$ and $k \le j+m_n$, 
conditioning on the values of~$Z_j$ up to time $j + k + m_n$ and using the Markov property,
the quantity~$\e_{W}$ in~\Ref{dtv-assn} is no bigger than any bound for the distance in~\Ref{MC-dtv-1},
for $l = n - j - k - m_n \ge n/2 - 2m_n \ge n/4$, that is uniform in the initial state~$r$. 
For $j > n/2$, we note that the same argument
works, using the {\it reversed\/} Markov chain.  We establish~\Ref{MC-dtv-1} by using coupling.

Let $Z'$ and~$Z''$ be two copies of the Markov chain~$Z$, both starting in~$r$.  We couple them in such a way
that the sequence of transitions in the first is the same as that in the second, except that the holding times in $0$
and~$i$ are allowed to be different.  Initially, if $(N_{0l}', \,l\ge1)$ and $(N_{0l}'', \,l\ge1)$ denote the sequence of
successive holding times in~$0$  of the two chains, then the pair $(N_{0l}',N_{0l}'')$ is chosen independently
of the past according to the Mineka coupling (Lindvall 2002, Section~II.14), 
so that $(N_{0l}'-N_{0l}'',\,l\ge1)$ are the increments of a
lazy symmetric random walk with steps in $\{-1,0,1\}$.  After the first occasion~$L_0$ such that 
$$
       \sum_{l=1}^{L_0} \{N_{0l}'-N_{0l}''\} \Eq 1,
$$
the values of $N_{0l}'$ and~$N_{0l}''$ are chosen to be identical.  The same strategy is applied to the holding
times $N_{il}'$ and~$N_{il}''$, except that they are chosen to be identical after the first occasion~$L_i$ on which
$$
     \sum_{l=1}^{L_i} \{N_{il}'-N_{il}''\} \Eq -1.
$$
Let~$M_{0i}$ denote the first time in the underlying Markov chains~$Z'$ and~$Z''$ at which both of these
occasions have occurred.  At this point, both chains have made the same number of steps, because their
paths differ only through differences in the partial sums $\sum_l\{N_{0l}' + N_{il}'\}$ and $\sum_l\{N_{0l}'' + N_{il}''\}$,
and these are equal at all times after~$M_{0i}$.  However, at this point, both have spent the same amount of time in states
other than $i$ and~$0$, but~$Z'$ has spent one step less in~$i$.  By the usual coupling argument, 
for any set $A \subset \Z^d$,
\eqs
  \lefteqn{\pr_r\Bigl[\sum_{j=1}^{{k}} X\uj + e\uii \in A\Bigr] \Eq \pr_r\Bigl[\sum_{j=1}^{{k}} (X\uj)' + e\uii \in A\Bigr]} \\
       &&\Eq \pr_r\Bigl[\Bigl\{\sum_{j=1}^{{k}} (X\uj)' + e\uii \in A\Bigr\} \cap \{M_{0i} \le {k}\}\Bigr]
                 + \pr_r\Bigl[\Bigl\{\sum_{j=1}^{{k}} (X\uj)' + e\uii \in A\Bigr\} \cap \{M_{0i} > {k}\}\Bigr] \\
      && \Eq \pr_r\Bigl[\Bigl\{\sum_{j=1}^{{k}} (X\uj)'' \in A\Bigr\} \cap \{M_{0i} \le {k}\}\Bigr]
             + \pr_r\Bigl[\Bigl\{\sum_{j=1}^{{k}} (X\uj)' + e\uii  \in A\Bigr\} \cap \{M_{0i} > {k}\}\Bigr] \\
      && \Eq  \pr_r\Bigl[\Bigl\{\sum_{j=1}^{{k}} (X\uj)'' \in A\Bigr\} \Bigr] 
              + \pr_r\Bigl[\Bigl\{\sum_{j=1}^{{k}} (X\uj)' + e\uii  \in A\Bigr\} \cap \{M_{0i} > {k}\}\Bigr] \\
      &&\qquad\mbox{}    - \pr_r\Bigl[\Bigl\{\sum_{j=1}^{{k}} (X\uj)'' \in A\Bigr\} \cap \{M_{0i} > {k}\}\Bigr] \\
      && \Eq  \pr_r\Bigl[\Bigl\{\sum_{j=1}^{{k}} X\uj \in A\Bigr\} \Bigr]
          + \pr_r\Bigl[\Bigl\{\sum_{j=1}^{{k}} (X\uj)' + e\uii  \in A\Bigr\} \cap \{M_{0i} > {k}\}\Bigr] \\
      &&\qquad\mbox{}   - \pr_r\Bigl[\Bigl\{\sum_{j=1}^{{k}} (X\uj)'' \in A\Bigr\} \cap \{M_{0i} > {k}\}\Bigr].
\ens
It thus follows that
\[
   \dtv\Bigl(\law_{r}\Bigl(\sum_{j=1}^{{k}} X\uj + e\uii\Bigr),
                  \law_{r}\Bigl(\sum_{j=1}^{{k}} X\uj \Bigr)\Bigr) \Le \pr_r[M_{0i} > {k}].
\]

Now we have $\pr[L_0 > l] = O(l^{-1/2})$ and $\pr[L_i > l] = O(l^{-1/2})$, by Lindvall~(2002,  Section~II.14).
Also, because~$Z$ has finite state space, the times between visits to~$0$ and between visits to~$i$
have means $\g_0$ and~$\g_i$ and finite variances $v_0$ and~$v_i$.  So, if~$\t'_{0l}$ denotes the time at which~$Z'$
completes its $l$-th visit to~$0$, we have
\[
     \{M_{0i} > \quarter n\} \subset \{L_0 > \a n\} \cup \{\t'_{0,\a n} > \quarter n\}
                     \cup \{L_i > \a n\} \cup \{\t'_{i,\a n} > \quarter n\} .   
\]
Hence it follows by Chebyshev's inequality that, if $\a\max\{\g_0,\g_i\} < 1/8$, then
\[
     \pr_r[M_{0i} > \quarter n] \Le \pr_r[L_0 > \a n] + \pr_r[L_i > \a n]
             + \frac{\a nv_0}{(\quarter n - \a\g_0 n)^2} + \frac{\a n v_i}{(\quarter n - \a\g_i n)^2}
     \Eq O(n^{-1/2}),
\]
where this order follows for the first pair of terms as above, and the second pair are
of order $O(n^{-1})$.  This shows that $\e_{W} = O(n^{-1/2})$.

\begin{theorem}\label{MC-thm}
 Let $(Z_j,\,j\ge1)$ be an irreducible, aperiodic Markov chain on a finite state space $\{0,1,\ldots,d\}$,
that satisfies Assumption~A1. Let $W_n := (W_{n1},\ldots,W_{nd})^{{T}}$ represent the number of steps spent
in the states $1,2,\ldots,d$ up to time~$n$.  Then, for any $0\le r\le d$,
\[
   \dtv(\law_r(W_n),\DN_d(n\bpi,nV)) \Eq O(n^{-1/2}\log^3n), 
\]
where $\bpi$ and~$V$ are as given in~\Ref{MC-variance-asymp}.
\end{theorem}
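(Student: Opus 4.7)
The plan is to apply Theorem~\ref{DN-approx} with the local decomposition already set up before the statement, namely $Z\uj := \sum_{|l-j|\le m_n}X\ul$ and $Z\ujk := \sum_{\max\{|l-j|,|l-k|\}\le m_n}X\ul$, for a window size $m_n$ to be chosen later of order $\log n$. Most of the work is already done in the discussion preceding the theorem: geometric ergodicity~\Ref{MC-geometric} yields $\ch_1+\ch_2+\ch_3 = O(\r^{m_n})$, the Mineka-based coupling construction for the holding times in states $0$ and $i$ gives $\e_{W} = O(n^{-1/2})$ uniformly in the initial state, and because $|X\uj|=1$ trivially, the moment sums reduce to cardinality estimates, giving $H_0 = O(1)$, $H_1 = O(m_n)$, and $H_2 = O(m_n^2)$. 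The bounds $|Z\uj|,|Z\ujk|\le 2m_n+1$ then verify~\Ref{Zhat-def} as soon as $\hn$ is large, while~\Ref{MC-variance-asymp} gives $\hn\asymp n$ with $c = d^{-1}\tr V$.

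Before invoking Theorem~\ref{DN-approx}, I would note that the positive definiteness of $V$ in~\Ref{MC-variance-asymp}, and hence the boundedness of $\r(V_n)$ uniformly in~$n$, follows from the standard Markov chain CLT: the asymptotic covariance is degenerate only if some non-trivial linear combination of the indicators $I[Z_j=i]$, $1\le i\le d$, is a coboundary, which fails here because the chain is irreducible on $\{0,1,\ldots,d\}$ and all $d+1$ states are distinct.  Thus the constants $C_{\ref{DN-approx}}$ and $n_{\ref{DN-approx}}$ depending continuously on $\r(V_n)$ stay bounded.

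Assembling the pieces, Theorem~\ref{DN-approx} yields
\beas
  \dtv(\law_r(W_n),\DN_d(n\bpi,nV))
  &\Le& C\log n \cdot \bigl\{ m_n^2 \, n^{-1/2} + \r^{m_n}\bigr\}.
\enas
Choosing $m_n := \lceil (2|\log \r|)^{-1}\log n\rceil$ balances the two contributions, giving $\r^{m_n} \le n^{-1/2}$ and the claimed bound of order $n^{-1/2}\log^3 n$.

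The main points that I would be careful about are as follows. First, the bounds on the dependence coefficients $\ch_1,\ch_2,\ch_3$ must be shown to hold uniformly in the conditioning variables that appear in~\Ref{RR-csts}: this is a matter of repeatedly conditioning on intermediate states of the chain and applying~\Ref{MC-geometric} on the two resulting time windows of length at least $m_n$, but the bookkeeping needs to cover all the expressions in~\Ref{RR-csts}. Second, the coupling argument sketched before the theorem controls $\dtv(\law_r(\sum_{j=1}^k X\uj + e\uii),\law_r(\sum_{j=1}^k X\uj))$, and one must verify that the smoothness condition~\Ref{dtv-assn}, which asks for an analogous bound conditionally on $X\uj, Z\uj$ (and similarly with $\tX\ujk,Z\ujk$), follows from this. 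Here the Markov property is the key tool: conditioning on $(X\uj,Z\uj)$ is the same as conditioning on a finite window of states around time $j$, after which the pre- and post-window segments of the chain evolve independently conditional on their boundary states, and the coupling bound on the (uniformly longer) of the two segments transfers directly. The remaining verifications are routine.
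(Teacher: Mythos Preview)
Your proposal is correct and follows essentially the same route as the paper: apply Theorem~\ref{DN-approx} with $m_n \asymp \log n$, use geometric ergodicity for $\ch_1+\ch_2+\ch_3 = O(\r^{m_n})$, the Mineka-type coupling for $\e_W = O(n^{-1/2})$, and the trivial indicator bounds for $H_0,H_1,H_2$, so that the dominant term is $\log n \cdot H_2(\e_W + \hn^{-1/2}) = O(n^{-1/2}\log^3 n)$. One step you omit that the paper includes explicitly: Theorem~\ref{DN-approx} delivers approximation by $\DN_d(\m_n,V_n)$ with the \emph{exact} mean and covariance of~$W_n$, whereas the stated conclusion uses $\DN_d(n\bpi,nV)$; the paper closes this gap by noting from~\Ref{MC-geometric} that $|\m_n - n\bpi| = O(1)$ and $|(V_n)_{ir} - nV_{ir}| = O(1)$, which suffices to pass from one discrete normal to the other at the stated accuracy.
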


\begin{proof}
 We apply Theorem~\ref{DN-approx}, taking $m_n = \log n/\log(1/\r)$, so that $\ch_1 + \ch_2 + \ch_3
= O(n^{-1})$.  Then $\log n\, H_2(\e_{W} + \hn^{-1/2}) = O(n^{-1/2}\log^3n)$ represents the largest order term
in the error bound.  Finally, it follows from~\Ref{MC-geometric} that {$|\ex W_n - n\bpi| = O(1)$}
and that $|\cov(W_n)_{ir} - nV_{ir}| = O(1)$ for each $i,r$ also, so that, to the stated accuracy,
we can replace the mean and covariance by $n\bpi$ and~$V$ respectively.
\end{proof}

\subsection{Maximal points}\label{max-pts}
\def\bal{{\pmb\alpha}}
\def\bbeta{{\pmb\beta}}
\def\bgamma{{\pmb\gamma}}
\def\boeta{{\pmb\eta}}
\def\mmm{{\hat m}}

Given {a configuration $\Xi$ of points} in~$\re^2$, a point $\bal=(\a_1,\a_2)^T\in \Xi$ is called {\it maximal\/} 
if there are no other points $\bbeta=(\b_1,\b_2)^T\in \Xi$ such that $\b_i\ge \a_i$ for $i=1,2$. 
In this example, we take~$\Xi$ to be a realisation of a Poisson point process with intensity $\lambda$ on the triangle
$$
   \G \Def \{\bal=(\a_1,\a_2)^T:\ 0\le \a_2\le 1-\a_1,\ 0\le \a_1\le 1\}.
$$
Letting 
$$
     A_\bal \Def \{(x_1,x_2)^T:\ \a_1\le x_1\le 1-\a_2,\a_2\le x_2\le 1-x_1\}\setminus\{\bal\},
$$
a point $\bal$ of~$\Xi$ is maximal if $\Xi(A_\bal)=0$. The process of maximal points of~$\Xi$ can thus be written as 
{the random point measure} $\Ups(d\bal):=\bone_{[\Xi(A_\bal)=0]}\Xi(d\bal)$, and has mean measure 
$$
        \ups(d\bal) \Def \ex\Ups(d\bal)=\l e^{-\frac12\l(1-\a_1-\a_2)^2}d\a_1 d\a_2.
$$ 
For $0\le b_1<d_1\le b_2<d_2<\infty$, define the strips
$$
      E_i \Def \bigl\{\bal=(\a_1,\a_2)^T: \{(1-d_i\lambda^{-1/2}-\a_1)\vee 0\} 
               \le\a_2<1-b_i\lambda^{-1/2}-\a_1,\ 0\le\a_1\le1-b_i\l^{-1/2} \bigr\},
$$
{parallel to the hypotenuse of~$\G$ and close to it,}
and define $Y_i=\Ups(E_i)$. Our interest is in the approximate joint distribution of $(Y_1,Y_2)^T$.

\begin{proposition}\label{propmaximal1} Let $\phi(x)=e^{-\frac{x^2}{2}}$ and $\mmm_i=\int_{b_i}^{d_i}\phi(x)dx$,
{and define
\eqs
    \s_{ii} &:=& \mmm_i + 2\mmm_i^2\int_0^{b_i}\frac1{\phi(x)}dx
             +2\int_{b_i}^{d_i}\phi(z)dz\int_{b_i}^z\frac1{\phi(y)}dy\int_{y}^{d_i}\phi(x)\,dx \\
            &&-2\mmm_i(\phi( b_i)-\phi(d_i)),\ i=1,2; \\
    \s_{12} &:=& 2\mmm_2\int_{b_1}^{d_1}\phi(z)dz\int_0^z \frac1{\phi(y)}\,dy \\
              &&- \left\{\mmm_1(\phi(b_2)-\phi(d_2))+\mmm_2(\phi(b_1)-\phi(d_1))\right\}.
\ens
Then, as $\l\to\infty$,}
\bea
    \ex Y_i &=& \ups(E_i) \sim \mmm_i\sqrt{\lambda}; \quad \var(Y_i) \ \sim\ \s_{ii}\sqrt{\l},\  i=1,2;
     \quad \cov(Y_1,Y_2) \ \sim\ \s_{12}\sqrt{\l}. \non
\ben
\end{proposition}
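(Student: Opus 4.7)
The plan is to compute $\ex Y_i$, $\var(Y_i)$ and $\cov(Y_1,Y_2)$ directly via the Campbell--Mecke formulas for the Poisson process, using Slivnyak's theorem to identify the reduced Palm distributions with the law of $\Xi$ itself. For the mean, the first-order Campbell formula combined with $\bal\notin A_\bal$ gives
$$
\ex Y_i \Eq \int_{E_i}\l\,\pr[\Xi(A_\bal)=0]\,d\bal \Eq \int_{E_i}\l e^{-\l(1-\a_1-\a_2)^2/2}\,d\bal.
$$
Changing variables to $v=1-\a_1-\a_2$ (with $\a_1$ integrated over $[0,1-v]$) and rescaling $s=\sqrt{\l}\,v$ produces $\ups(E_i)=\sqrt{\l}\int_{b_i}^{d_i}(1-s/\sqrt{\l})\phi(s)\,ds\sim\sqrt{\l}\,\mmm_i$.

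For second moments the second-order Campbell formula plus Slivnyak yields
$$
\ex[Y_iY_j]\Eq \d_{ij}\ex Y_i+\int\!\!\int_{E_i\times E_j}\l^2\bone_{[\bal\notin A_\bbeta,\,\bbeta\notin A_\bal]}\,e^{-\l|A_\bal\cup A_\bbeta|}\,d\bal\,d\bbeta,
$$
with the indicator restricting to incomparable pairs, since a dominated point cannot be maximal. Writing $e^{-\l|A_\bal\cup A_\bbeta|}=e^{-\l|A_\bal|-\l|A_\bbeta|}\,e^{\l|A_\bal\cap A_\bbeta|}$ and subtracting $\ex Y_i\ex Y_j$ gives
$$
\cov(Y_i,Y_j)\Eq \d_{ij}\ups(E_i)+\text{Incomp}_{ij}-\text{Comp}_{ij},
$$
where Incomp integrates $\l^2 e^{-\l|A_\bal|-\l|A_\bbeta|}(e^{\l|A_\bal\cap A_\bbeta|}-1)$ over incomparable pairs and Comp integrates $\l^2 e^{-\l|A_\bal|-\l|A_\bbeta|}$ over comparable pairs. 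A direct geometric computation gives $|A_\bal\cap A_\bbeta|=(1-\max\{\a_1,\b_1\}-\max\{\a_2,\b_2\})_+^2/2$ when the points are incomparable, and $A_\bal\cap A_\bbeta$ equals the smaller of the two regions when comparable. I then introduce $s_1=\sqrt{\l}(1-\a_1-\a_2)$, $s_2=\sqrt{\l}(1-\b_1-\b_2)$ and the tangential offset $u=\sqrt{\l}(\b_1-\a_1)$; the Jacobians combine with the prefactor $\l^2$ to leave one factor $\sqrt{\l}$, while the along-hypotenuse $\a_1$-integration contributes a length factor tending to~$1$.

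The incomparable region splits into case~(a) $\a_1<\b_1,\a_2>\b_2$ and case~(b) $\a_1>\b_1,\a_2<\b_2$. In case~(a) the substitution $y=s_1-u$ reduces the $u$-integral of $e^{\l|A_\bal\cap A_\bbeta|}-1$ to $\int_0^{s_1\wedge s_2}(e^{y^2/2}-1)\,dy$, and in case~(b) the substitution $y=s_2+u$ gives the analogous expression over the correctly identified $u$-range. For the variance ($i=j$), the two cases combine to a factor~$2$, the comparable part contributes a $u$-length $|s_1-s_2|$, and the identity $2(s_1\wedge s_2)+|s_1-s_2|=s_1+s_2$ collapses the linear pieces to $-2\mmm_i\int_{b_i}^{d_i}s\phi(s)\,ds=-2\mmm_i(\phi(b_i)-\phi(d_i))$; splitting $\int_0^{s_1\wedge s_2}e^{y^2/2}dy=\int_0^{b_i}+\int_{b_i}^{s_1\wedge s_2}$ and swapping the order of integration by Fubini then delivers the remaining $2\mmm_i^2\int_0^{b_i}1/\phi+2\int_{b_i}^{d_i}\phi(z)\int_{b_i}^z1/\phi(y)\int_y^{d_i}\phi(x)\,dx\,dz$ summands of $\s_{ii}$. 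For the covariance, $d_1\le b_2$ forces $s_1\le s_2$, so case~(b) has $u$-range $(-s_2,\,s_1-s_2)$ of length $s_1$, both cases yield $\int_0^{s_1}(e^{y^2/2}-1)\,dy$, and the comparable piece gives $\sqrt{\l}[\mmm_1(\phi(b_2)-\phi(d_2))-\mmm_2(\phi(b_1)-\phi(d_1))]$; assembling these reproduces $\s_{12}$. The main obstacle is pinning down the $u$-range in case~(b) of the covariance---its upper endpoint is $s_1-s_2<0$ and not $0$, which is exactly what produces the coefficient $2\mmm_2$ (rather than $\mmm_1+\mmm_2$) in the leading term of $\s_{12}$; a secondary nuisance is showing that the true length $1-v_1$ of the $\a_1$-integration and the contributions from points within $O(\l^{-1/2})$ of the corners of $\G$ only affect the answer at order $o(\sqrt{\l})$.
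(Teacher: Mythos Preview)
Your argument is correct, and the route genuinely differs from the paper's. The paper organizes the second-moment calculation through the Palm decomposition $\var(Y_i)=\ups(E_i)+\int_{E_i}\ex\bigl(\Ups((N_\bal^L\cup N_\bal^U)\cap E_i)\mid\Ups(\{\bal\})=1\bigr)\ups(d\bal)-\int_{E_i}\ups(N_\bal\cap E_i)\ups(d\bal)$, and then integrates region by region over the geometric pieces $N_\bal^L,N_\bal^U,N_\bal^D,A_\bal$ (with analogous regions $N_\bal^1,N_\bal^2,N_\bal^3$ for the covariance), each integral reduced by a separate change of variables. You instead invoke Slivnyak to write the off-diagonal second moment as a single double integral of $\l^2 e^{-\l|A_\bal\cup A_\bbeta|}$ over incomparable pairs, subtract the product of means via $e^{-\l|A_\bal\cup A_\bbeta|}=e^{-\l|A_\bal|-\l|A_\bbeta|}e^{\l|A_\bal\cap A_\bbeta|}$, and handle everything with one change of variables $(s_1,s_2,u)$ and the closed-form $|A_\bal\cap A_\bbeta|=\tfrac12(1-\max\a_1,\b_1-\max\a_2,\b_2)_+^2$. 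Your approach is more algebraic and avoids the case-by-case geometry of the figures; the paper's region-based calculation, on the other hand, dovetails with the neighbourhood structure $D_\bal=N_\bal\cup\{\bal\}$ that is reused in the subsequent application of Theorem~\ref{DN-approx-cont}. Your identification of the case-(b) $u$-range $(-s_2,s_1-s_2)$ for the covariance, yielding the coefficient $2\mmm_2$, and your use of $2(s_1\wedge s_2)+|s_1-s_2|=s_1+s_2$ to collapse the linear pieces for the variance, are both sound; the nuisance terms from the corners and the $(1-v)$ length factor are indeed $o(\sqrt\l)$ by dominated convergence, exactly as the paper also treats them implicitly.
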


\noindent{\bf Proof:} Since $\ups(d\bal)=\ex\Ups(d\bal)=\l \phi(\sqrt{\l}(1-\a_1-\a_2))d\a_1 d\a_2$, we have
\eqs
    \ups(E_i)&=&\l \int_0^{1-b_i\l^{-1/2}}d\a_1\int_{0\vee (1-\a_1- d_i\l^{-1/2})}^{1-\a_1-b_i\l^{-1/2}}
                   \phi\left(\sqrt{\l}(1-\a_1-\a_2)\right)d\a_2.
\ens
By taking $x=\sqrt{\l}(1-\a_1-\a_2)$ and $y=\a_1$, {we obtain
\[
    \ups(E_i) \Eq \sqrt{\lambda}\int_0^{1-b_i\l^{-1/2}}dy\int_{b_i}^{d_i\wedge (\sqrt{\lambda}(1-y))}\phi(x)\,dx,
\]
from which the first claim follows.} 

\begin{figure}
\centerline{\hskip0cm\parbox{8cm}{
  \begin{center} 
   \includegraphics[trim = 70mm 40mm 45mm 25mm, clip,width=0.58\textwidth]{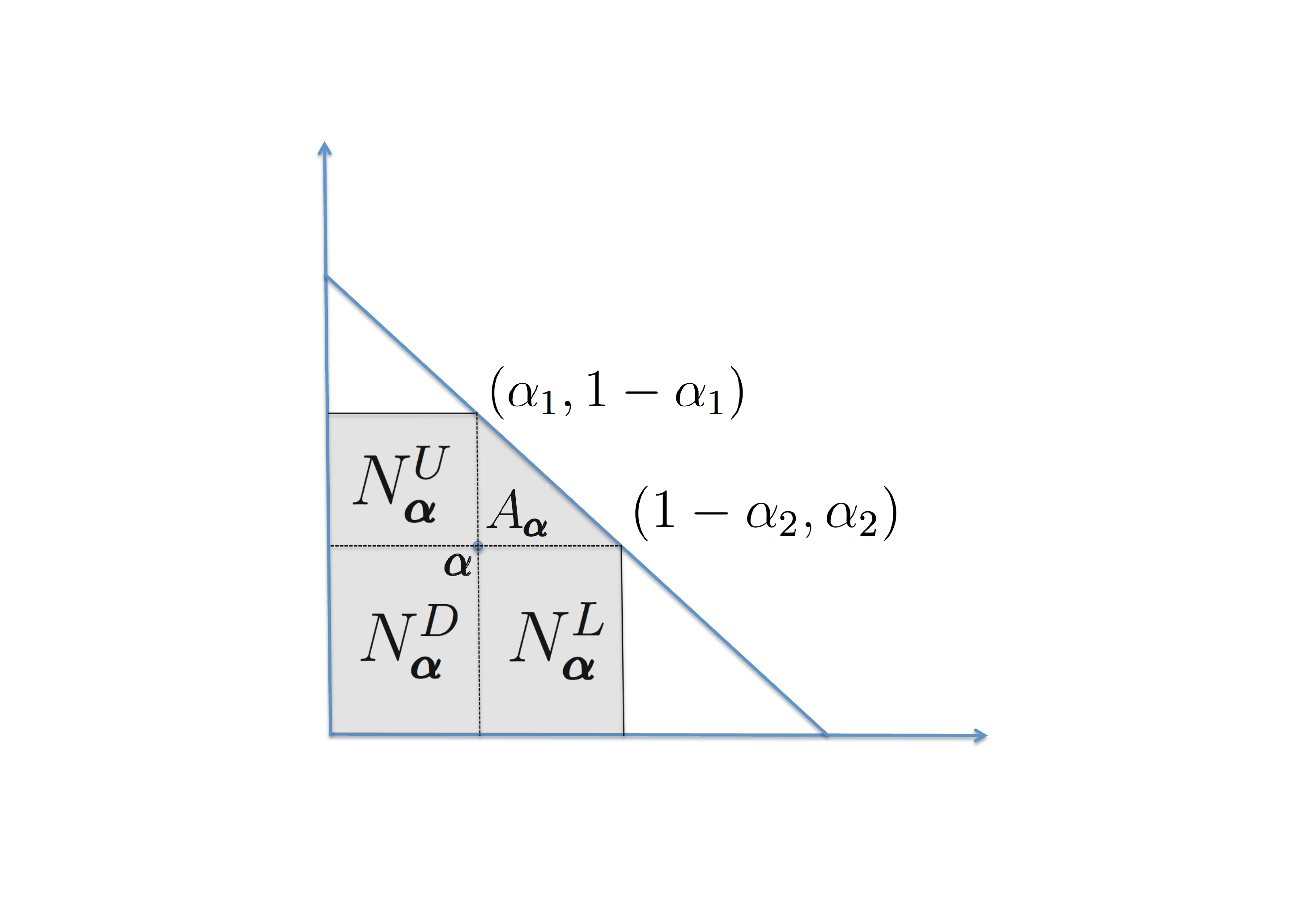}
\end{center}
 \caption{The dependence neighbourhood} %
\label{figure1} 
}
\parbox{8cm}{  \begin{center} 
   \includegraphics[trim = 70mm 40mm 107mm 60mm, clip,width=0.487\textwidth]{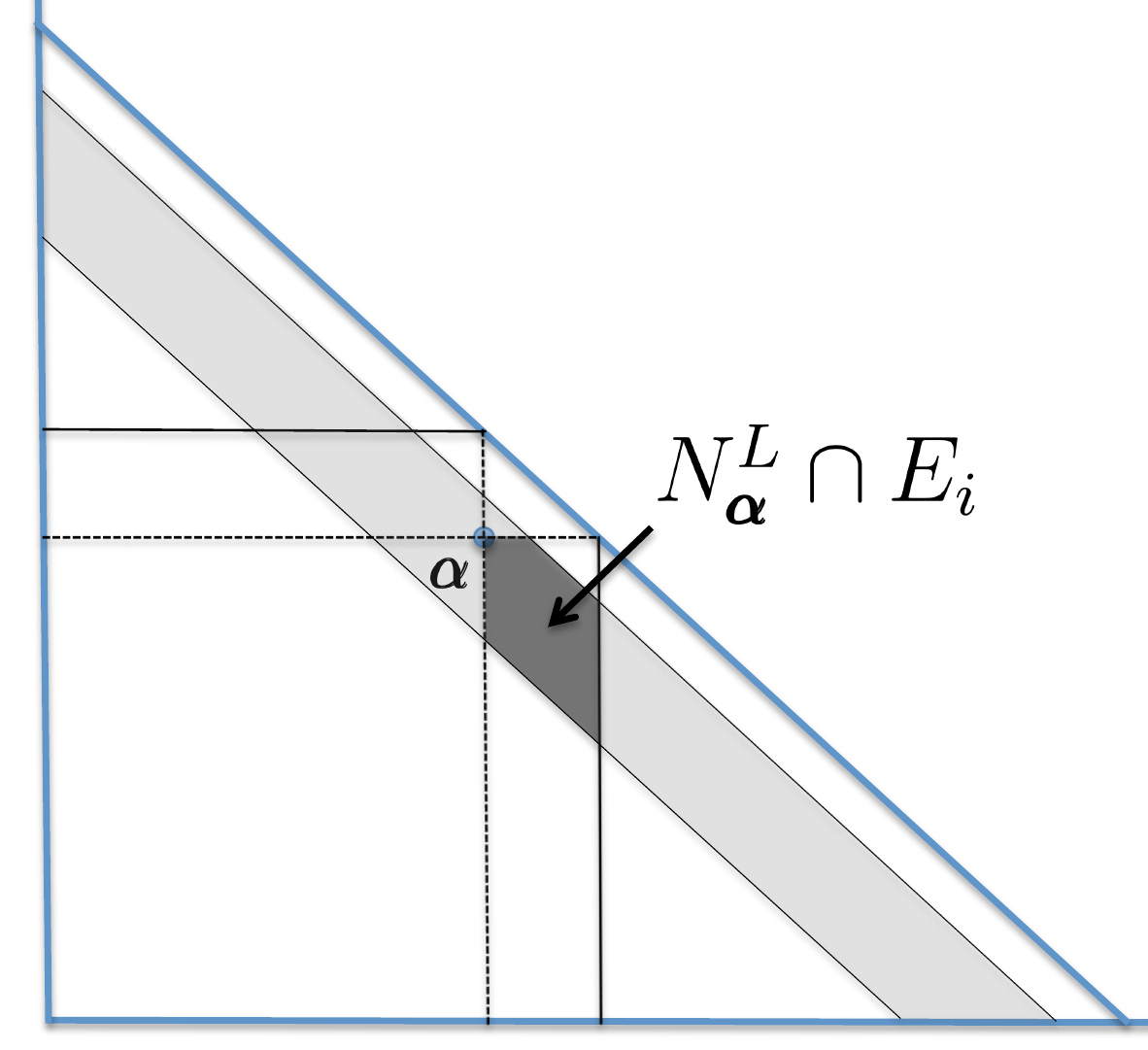}
\end{center}
 \caption{Dark area is $N_\bal^L\cap E_i$} %
\label{figure2} 
}
}
\end{figure}

Next, referring to Figure~\ref{figure1}, we define
\eqs
    N_\bal^U &:=& \{(x_1,x_2)^T:\ 0\le x_1<\a_1,\a_2\le x_2\le 1-\a_1\}; \\
    N_\bal^L &:=& \{(x_1,x_2)^T:\ \a_1\le x_1\le1-\a_2,0\le x_2< \a_2\}; \\
    N_\bal^D &:=& \{(x_1,x_2)^T:\ 0\le x_1<\a_1,0\le x_2<\a_2\}, 
\ens
and then set $N_\bal=A_\bal\cup N_\bal^U\cup N_\bal^L\cup N_\bal^D$.
{Then, since $I[\Xi(A_{\bal}) = 0]$ is independent of $I[\Xi(A_{\bbeta}) = 0]$ for $\bbeta \notin N_{\bal}{\cup\{\bal\}}$,
and $\Xi(N_\bal^D) = \Xi(A_\bal) = 0$ if $\Ups(\{\bal\}) = 1$, we have
\bea
   \var(Y_i)&=& \ups(E_i) + \int_{E_i}\ex\left(\Ups((N_\bal^L \cup N_\bal^U)\cap E_i) 
                     \giv \Ups(\{\bal\})=1\right)\ups(d\bal)\nonumber\\
        &&-\int_{E_i}\ups(N_\bal\cap E_i)\ups(d\bal).\label{maximal04}
\ben
}
However, using Figure~\ref{figure2}, we obtain
\bea
&&\int_{E_i}\ex\left(\Ups(N_\bal^L\cap E_i)|\Ups(\{\bal\})=1\right)\ups(d\bal)\nonumber\\
&& \Eq \lambda\int_{E_i}\ups(d\bal)\int_{\a_1}^{1-\a_2}d\b_1
            \int_{(1-d_i\l^{-1/2}-\b_1)\vee 0}^{\a_2\wedge (1- b_i\l^{-1/2}-\b_1)}
                 e^{-\frac{\lambda}{2}((1-\b_1-\b_2)^2-(1-\b_1-\a_2)^2)}\,d\b_2\nonumber\\
&& \Eq\sqrt{\l}\int_0^{1-b_i\l^{-1/2}}d\a_1\int_{b_i}^{d_i\wedge ((1-\a_1)\sqrt{\l})}\phi(z)\,dz
          \int_0^z\frac1{\phi(y)}dy\int_{y\vee  b_i}^{d_i\wedge (y-z+\sqrt{\l}(1-\a_1))}\phi(x)\,dx\nonumber\\
&&\ \sim\ \sqrt{\l}\mmm_i^2\int_0^{b_i}\frac{1}{\phi(y)}dy+\sqrt{\l}\int_{b_i}^{d_i}\phi(z)\,dz
                \int_{b_i}^{z}\frac1{\phi(y)}dy\int_{y}^{d_i}\phi(x)\,dx,\label{maximal05}
\ben
where the last equality is from the change of variables 
\begin{equation}
   1-\a_2\Eq\a_1+z\lambda^{-1/2},\quad x\Eq(1-\b_1-\b_2)\sqrt{\l}\quad\mbox{and} \quad y\Eq(1-\b_1-\a_2)\sqrt{\l}.
             \label{maximal13}
\end{equation}
{By symmetry, the calculation for $N_\bal^D\cap E_i$ gives an identical result.}
Similarly, by taking $1-\a_2=\a_1+z\lambda^{-1/2}$, $y=\sqrt{\l}(\a_1-\b_1)$ and $x=\sqrt{\l}(1-\b_1-\b_2)$ 
in the second equality below, we get
\bea
    \ups(N_\bal\cap E_i) &=& \int_{(\a_1-d_i\l^{-1/2})\vee 0}^{1-\a_2}d\b_1
      \int_{(1-d_i\l^{-1/2}-\b_1)\vee 0}^{(1-\a_1)\wedge (1- b_i\l^{-1/2}-\b_1)}\l\phi(\sqrt{\l}(1-\b_1-\b_2))d\b_2\,\non\\
    &=& \int_{-z}^{d_i\wedge (\a_1\sqrt{\l})}dy\int_{ b_i\vee y}^{d_i\wedge (y+\sqrt{\l}(1-\a_1))}\phi(x)dx,\non
\ben
which implies that
\bea
   &&\int_{E_i}\ups(N_\bal\cap E_i)\ups(d\bal)\nonumber\\
   &&\Eq \sqrt{\l}\int_0^{1- b_i\l^{-1/2}}d\a_1\int_{b_i}^{d_i\wedge ((1-\a_1)\sqrt{\l})}\phi(z)\,dz
         \int_{-z}^{d_i\wedge (\a_1\sqrt{\l})}dy\int_{ b_i\vee y}^{d_i\wedge (y+\sqrt{\l}(1-\a_1))}\phi(x)\,dx\nonumber\\
   &&\ \sim\ \sqrt{\l}\int_{ b_i}^{d_i}\phi(z)dz\int_{-z}^{d_i}dy\int_{ b_i\vee y}^{d_i}\phi(x)\,dx\nonumber\\
   &&\Eq 2\sqrt{\l}\mmm_i(\phi(b_i)-\phi(d_i)).\label{maximal06}
\ben
Combining \Ref{maximal05} and \Ref{maximal06} with \Ref{maximal04} gives the second claim.

\begin{figure}
\centerline{\hskip0cm\parbox{8cm}{
  \begin{center} 
   \includegraphics[trim = 70mm 40mm 107mm 60mm, clip,width=0.35\textwidth]{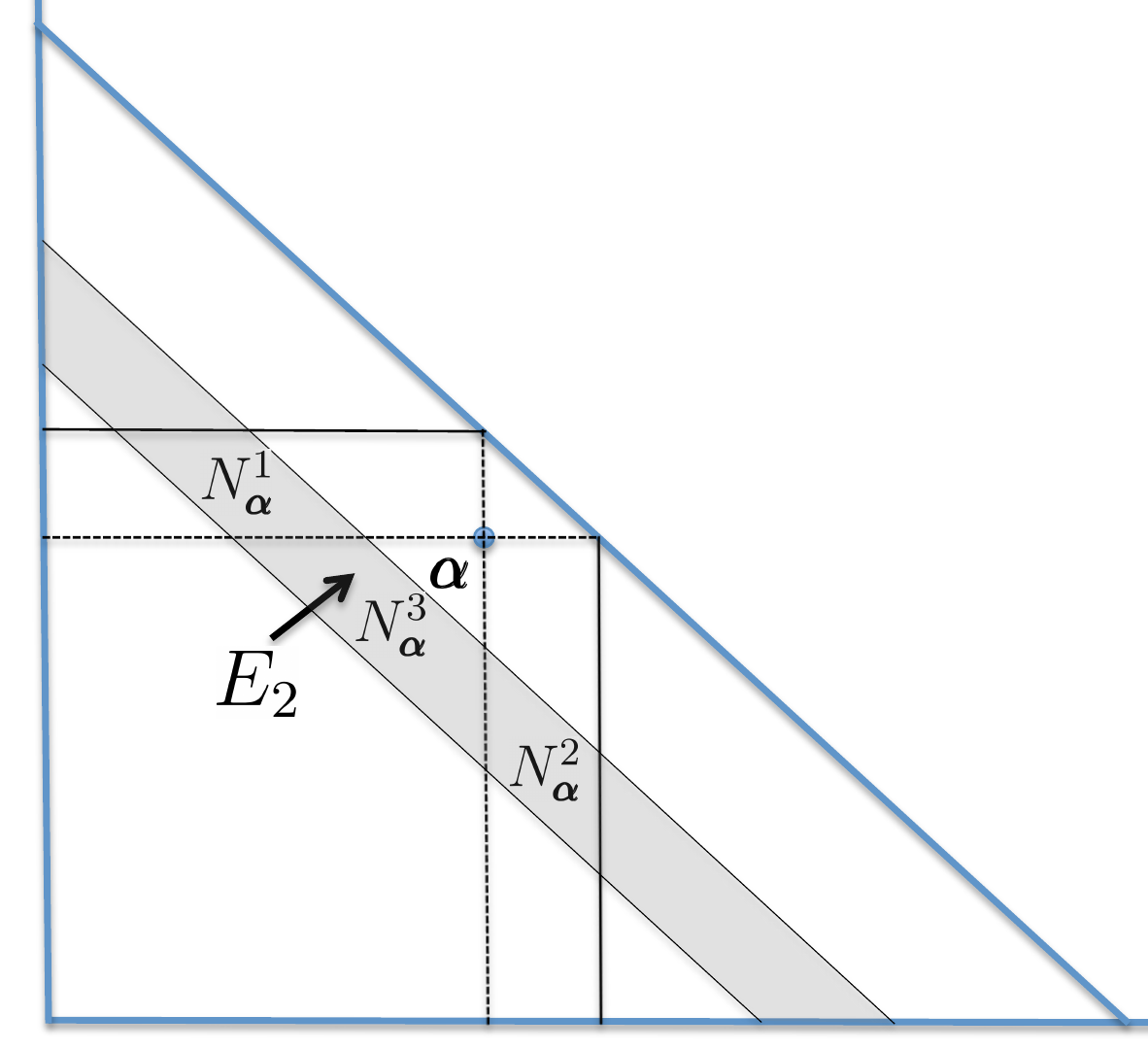}
\end{center}
 \caption{$N_\bal^1,\ N_\bal^2,\ N_\bal^3$}%
\label{figure3} 
}
\parbox{8cm}{  \begin{center} 
  \includegraphics[trim = 2mm 0mm 0mm 0mm, clip,width=0.35\textwidth]{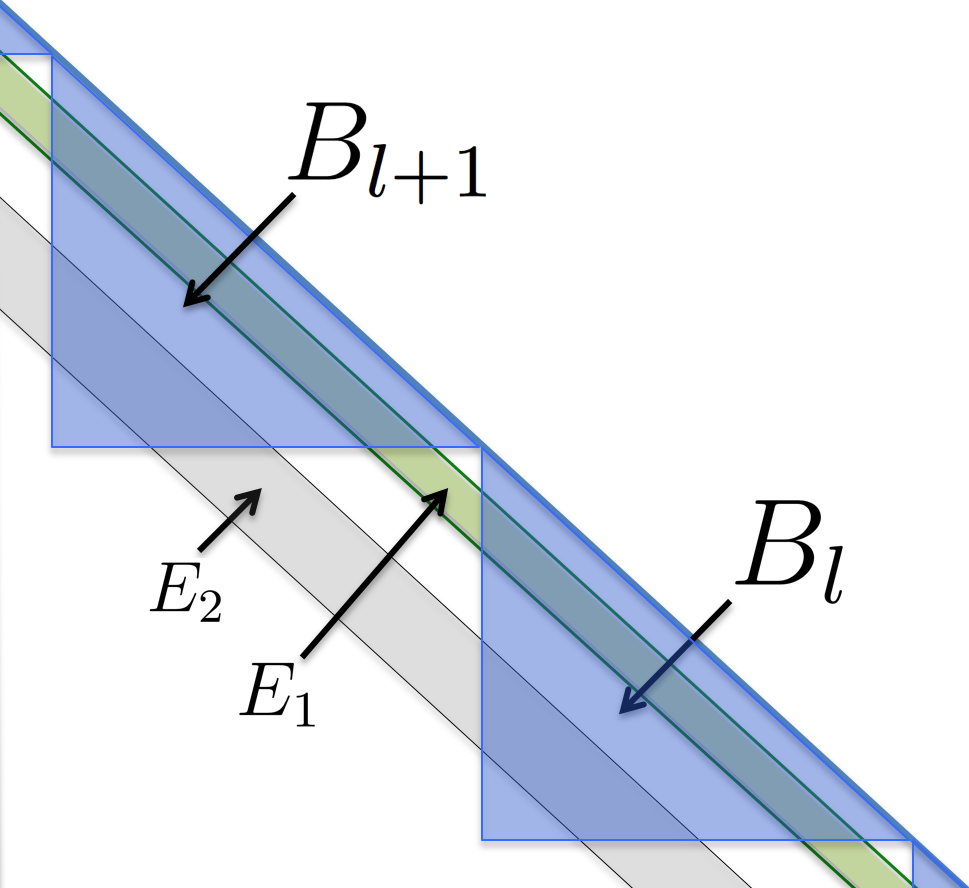}
\end{center}
  \caption{$B_l$} %
\label{figure4} 
}
}
\end{figure}

Finally we estimate $\cov(Y_1,Y_2)$. For $\bal\in E_1$, we refer to Figure~\ref{figure3}  and 
define \break $N_\bal^1:=E_2\cap N_\bal^U$, $N_\bal^2:=E_2\cap N_\bal^L$ and 
$N_\bal^3:=E_2\cap N_\bal^D$. Then we can express the covariance as
\bea
  \cov(Y_1,Y_2)&=& 2\int_{E_1}\ex[\Ups(N_\bal^2)|\Ups(\{\bal\})=1)]\ups(d\bal)
      -\int_{E_1}\ups(N_\bal^1\cup N_\bal^2\cup N_\bal^3)\ups(d\bal).\phantom{XX}\label{maximal07}
\ben
For the first term, we have
\bea
  &&\ex[\Ups(N_\bal^2)|\Ups(\{\bal\})=1)]\nonumber\\
  &&\Eq \l\int_{\a_1}^{1-\a_2}d\b_1\int_{(1-d_2\l^{-1/2}-\b_1)\vee 0}^{1-b_2\l^{-1/2}-\b_1}
                  \frac{\phi(\sqrt{\l}(1-\b_1-\b_2))}{\phi(\sqrt{\l}(1-\b_1-\a_2))}d\b_2\nonumber\\
  &&\Eq \int_0^z\phi^{-1}(y)dy\int_{b_2}^{d_2\wedge (y-z+\sqrt{\l}(1-\a_1))}\phi(x)dx\nonumber\\
  &&\ \sim\ \mmm_2\int_0^z\phi^{-1}(y)dy,\label{maximal08}
\ben
{for $\a_1 < 1$,}
where the last equality is from the change of variables specified in \Ref{maximal13}. It thus follows from
\Ref{maximal08} that
\bea
   &&\int_{E_1}\ex[\Ups(N_\bal^2)|\Ups(\{\bal\})=1)]\ups(d\bal)\nonumber\\
   &&\ \sim\ \mmm_2\sqrt{\l}\int_0^{1-b_1\l^{-1/2}}d\a_1\int_{b_1}^{d_1\wedge ((1-\a_1)\sqrt{\l})}\phi(z)\,dz
              \int_0^z\phi^{-1}(y)\,dy\nonumber\\
   &&\ \sim\ \mmm_2\sqrt{\l}\int_{b_1}^{d_1}\phi(z)\,dz\int_0^z\phi^{-1}(y)\,dy.\label{maximal09}
\ben
Likewise, using the convention that $\int_{c_1}^{c_2}f(x)dx=0$ for $c_1>c_2$, we have
\bea
  \lefteqn{\int_{E_1}\ups(N_\bal^1\cup N_\bal^2\cup N_\bal^3)\ups(d\bal)}\nonumber\\
  &&=\ \l\int_{E_1}\ups(d\bal)\int_{(\a_1-d_2\l^{-1/2})\vee 0}^{1-\a_2}d\b_1
            \int_{(1-d_2\l^{-1/2}-\b_1)\vee 0}^{(1-b_2\l^{-1/2}-\b_1)\wedge (1-\a_1)}\phi(\sqrt{\l}(1-\b_1-\b_2))d\b_2
                  \nonumber\\
  &&=\ \sqrt{\l}\int_0^{1-b_1\l^{-1/2}}d\a_1\int_{b_1}^{d_1\wedge (\sqrt{\l}(1-\a_1))}\phi(z)\,dz
               \int_0^{(z+d_2)\wedge (z+\sqrt{\l}\a_1)}dy
              \int_{b_2\vee (y-z)}^{d_2\wedge (y-z+(1-\a_1)\sqrt{\l})}\phi(x)\,dx\nonumber\\
  &&\sim\ \sqrt{\l}\int_{b_1}^{d_1}\phi(z)dz\int_0^{z+d_2}dy\int_{b_2\vee (y-z)}^{d_2}\phi(x)\,dx\nonumber\\
  &&=\ \sqrt{\l}\mmm_1(\phi(b_2)-\phi(d_2))+\sqrt{\l}\mmm_2(\phi(b_1)-\phi(d_1)),\label{maximal10}
\ben
where, again, we used the the change of variables in \Ref{maximal13} for the penultimate equality.
Combining \Ref{maximal09} and \Ref{maximal10} with~\Ref{maximal07} completes the proof. \qed

\begin{theorem}\label{Maximal-approx}
Let $W=(Y_1,Y_2)^T$, $\m=\ex W$ and $V=\cov(W)$ be as in Proposition~\ref{propmaximal1}.  Then,
as $\l\to\infty$, 
\eqs
   \dtv(\law(W),\DN_2(\m,V)) \Eq O\left(\l^{-1/4}\ln (\l)\right).
\ens
\end{theorem}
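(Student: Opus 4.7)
The plan is to apply Theorem~\ref{DN-approx-cont} to the marked Poisson process with ground process $\Xi$ of intensity $\l$ on~$\G$ and mark at~$\a$ given by $X\uja := e\uii I[\a\in E_i,\,\Xi(A_\a)=0]$. By Proposition~\ref{propmaximal1}, $\tr V\sim(\s_{11}+\s_{22})\sqrt{\l}$, so $\hn\asymp\sqrt{\l}$ and $\hn^{-1/2}\asymp\l^{-1/4}$, exactly the target rate, and $\log\hn\asymp\log\l$.

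For each $\a\in E_1\cup E_2$, take $B_\a$ to be an axis-aligned box of side $c\l^{-1/2}$ centred at~$\a$, with $c$ large enough that $B_\a\supset A_\a\cup\{\a\}$ on the strips~$E_i$ (possible because $A_\a$ has diameter of order~$\l^{-1/2}$ there), and set $D_\a := B_\a\cup\{\b\in\G\colon A_\b\cap B_\a\ne\emptyset\}$. Define $W\uja := \int_{D_\a^c} X\ukb\,\Xi(d\b)$, $Z\uja := W-W\uja$, and $\tX\ujkab := X\ukb$ for $\b\in D_\a$, with $W\ujkab$ defined analogously from the complement of a suitably enlarged set $D_\a\cup D_\b$. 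By construction, $W\uja$ is measurable with respect to $\Xi|_{B_\a^c}$, whereas $X\uja$ and $Z\uja$ depend only on $\Xi|_{B_\a}$ (and the second-order split is analogous). By Slivnyak's theorem, $\Xi|_{B_\a^c}$ has the same distribution under $\pr$, $\pr_\a$ and $\pr_{\a\b}$, and is independent of $\Xi|_{B_\a}$; consequently all the dependence coefficients $\ch_1'$, $\ch_2'$, $\ch_3'$ vanish, and $\e_W''=\e_W'''=0$. A routine calculation, using that the intensity of maximal points in~$E_i$ is of order~$\sqrt{\l}$ and that $\nu(D_\a)=O(1)$ uniformly in~$\a$, also yields $H_0'=H_1'=H_2'=O(1)$.

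The main remaining task---and the main obstacle---is to bound the smoothness coefficient $\e_W'$, showing that the conditional total variation distances appearing in~\Ref{dtv-assn-cont} are all $O(\l^{-1/4})$ a.s. The plan is to mimic Theorem~\ref{smoothness} via a Mineka coupling. Choose $K\asymp\sqrt{\l}$ disjoint ``exchange cells'' $C_1,\ldots,C_K\subset E_i\cap B_\a^c$, each a box of side of order~$\l^{-1/2}$; conditional on $\Xi|_{(\bigcup_k C_k)^c}$, the configurations in different~$C_k$ are independent Poissons, and with probability bounded below uniformly in~$k$ the $k$-th cell contributes either $0$ or $1$ to the $i$-th coordinate of $W\uja$---realised, respectively, by $C_k$ being empty, or by $C_k$ containing a single point whose upper-right cone to the hypotenuse is empty. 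Writing $\z_k$ for this contribution, the Mineka coupling applied to $\sum_k\z_k$ (with the constant of Mattner \& Roos~(2007)) gives TV distance $O((Kp)^{-1/2})=O(\l^{-1/4})$, as required. The delicate point is that the maximality of a point in $C_k$ depends on $\Xi$ restricted to $A_\b$ for $\b\in C_k$, parts of which lie outside $\bigcup_k C_k$, so the $\z_k$ are not exactly independent; this is handled by absorbing those external pieces into the conditioning, along the same lines as the bound on $\pr[N(U_{1,1})=3]$ in Section~\ref{RGG}. Substituting $\e_W'=O(\l^{-1/4})$, $\e_W''=\e_W'''=0$, $\ch_j'=0$ and $H_j'=O(1)$ into Theorem~\ref{DN-approx-cont}, with $d=2$, yields the claimed bound $O(\l^{-1/4}\ln\l)$.
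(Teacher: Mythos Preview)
Your outline matches the paper's strategy closely: apply Theorem~\ref{DN-approx-cont}, exploit the Poisson structure so that the $\chi$-coefficients and $\e_W'',\e_W'''$ vanish, check $H_0',H_1',H_2'=O(1)$, and then bound~$\e_W'$ via a Mineka-type argument to get $\e_W'=O(\l^{-1/4})$.  One small slip: with your $D_\a$, it is \emph{not} true that $Z^{(\a)}$ depends only on $\Xi|_{B_\a}$ (for $\b\in D_\a\setminus B_\a$ the cone $A_\b$ can reach outside~$B_\a$); fortunately you only need $W^{(\a)}$ to be $\Xi|_{B_\a^c}$-measurable, and that does hold.

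The real difference, and the place where your sketch has a gap, is the smoothness bound.  You put the exchange cells $C_k$ inside~$E_i$ and condition on $\Xi|_{(\bigcup_k C_k)^c}$.  After this conditioning the Poisson configurations in different~$C_k$ are independent, but the contributions to~$W^{(\a)}$ are \emph{not}: a point $\b$ outside all~$C_k$'s with $A_\b\cap C_k\ne\emptyset$ has its maximality altered by what you put in~$C_k$, so $W^{(\a)}$ is not of the form $\sum_k\z_k+\text{constant}$ given the conditioning, and Mineka does not apply directly.  Your remark about ``absorbing external pieces into the conditioning'' does not resolve this, because the dependence runs both ways.  The paper sidesteps the issue by taking the cells to be the right-angled \emph{triangles} $B_l:=A_{(1-(l+1)\theta\l^{-1/2},\,l\theta\l^{-1/2})}$, with $\theta\ge d_2$, strung along the hypotenuse.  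The point of this shape is that, for any $\b\in B_l$, the whole cone $A_\b$ lies inside~$B_l$; hence $\boeta_l:=\int_{B_l\cap E'}X^{(\b)}\Xi(d\b)$ depends only on $\Xi|_{B_l}$, and the $\boeta_l$ are genuinely i.i.d.\ without any conditioning.  One then checks $\pr(\boeta_1=0)\ge e^{-\theta^2/2}$ and $\pr(\boeta_1=e^{(i)})\ge\tfrac12(d_i-b_i)(2\theta-d_i-b_i)e^{-\theta^2/2}$, and the Mineka bound gives $\e_W'=O(\kappa^{-1/2})=O(\l^{-1/4})$ with $\kappa\asymp\sqrt\l$.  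Replacing your box cells by these triangles closes the gap and gives the cleaner argument the paper uses.
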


\nin{\bf Proof} {In order to apply Theorem~\ref{DN-approx-cont} to the maximal points in $E':=E_1\cup E_2$, we need to
establish suitable decompositions.
As neighbourhoods,} we take $D_\bal:=N_{\bal}\cup\{\bal\}$, with $N_{\bal}$ as defined in 
the proof of Proposition~\ref{propmaximal1} (see Figure~\ref{figure1}). 
Proposition~\ref{propmaximal1} ensures that $\tr\,V {\asymp} \lambda^{1/2}$, {and so $\hn \asymp \lambda^{1/2}$} 
also. We assign a mark 
$$
         X^{(\bal)} \Def \bone_{[\Xi(A_{\bal})=0]}(\bone_{[{\bal}\in E_1]},\bone_{[{\bal}\in E_2]})^T
$$ 
if $\Xi(\{\bal\})=1$, so that $\mu^{(\bal)}=\ex X^{(\bal)}$,  and define
$$
         \tilde X^{(\bal,\bbeta)} \Def X^{(\bbeta)}, \qquad \bbeta\in D_{\bal}.
$$ 
Then 
$$
   W \Eq\int_{\bal\in E'}X^{(\bal)}\Xi(d\bal), \quad\mbox{and}\quad  \nu(d\bal) \Eq \l d\a_1d\a_2,\quad 
                 \nu_2(d\bal,d\bbeta) \Eq \nu(d\bal)\nu(d\bbeta).
$$
We now decompose the integral as follows. For each $\bal\in E'$, define 
\eqs
  Z^{(\bal)} &:=& \int_{\bbeta\in D_{\bal}\cap E'}X^{(\bbeta)}\Xi(d\bbeta);\quad
  W^{(\bal)} \Def \int_{\bgamma\in D_{\bal}^c\cap E'}X^{(\bgamma)}\Xi(d\bgamma);\\
  Z^{(\bal,\bbeta)} &:=& \int_{\bgamma\in D_{\bal}^c\cap D_{\bbeta}\cap E'}X^{(\bgamma)}\Xi(d\bgamma);\quad
  W^{(\bal,\bbeta)} \Def \int_{\bgamma\in D_{\bal}^c\cap D_{\bbeta}^c\cap E'}X^{(\bgamma)}\Xi(d\bgamma).
\ens
This decomposition ensures that $X^{(\bal)}$ is independent of $W^{(\bal)}$ with respect to $\pr$ and $\pr_\bal$, 
and that $W^{(\bal,\bbeta)}$ is independent of $(X^{(\bal)},X^{(\bbeta)})$ 
with respect to $\pr$, $\pr_\bal$, $\pr_\bbeta$ and $\pr_{\bal\bbeta}$.  This immediately implies that
$$
        \ch_{11}' \Eq \ch_{12}' \Eq \ch_{13}' \Eq \ch_{2}' \Eq \ch_3' \Eq \e_W'' \Eq \e_W''' \Eq 0.
$$ 
Hence, it suffices to show that $H_0',\ H_1',\ H_{2i}',\ i=1,2,3,4$, are all of order $O(1)$, 
that~\Ref{Zhat-def-cont} holds and that $\e_{W}'=O(\lambda^{-1/4})$.

For brevity, we write  $\varsigma_{\bal}=\bone_{[\Xi(A_{\bal})=0]}$ and $\theta_{\bal}=\ex (\varsigma_{\bal})$. 
Clearly, $\ex_\bal(\varsigma_{\bal})=\theta_\bal$ also, and 
$$
   \ex_{\bal\bbeta}(\varsigma_{\bal}\varsigma_{\bbeta})\Eq\left\{\begin{array}{ll}
              \ex(\varsigma_{\bal}\varsigma_{\bbeta})&\mbox{ for }\bbeta\in N_\bal^U\cup N_\bal^L\\
                 0&\mbox{ for }\bbeta\in N_\bal^D\cup A_\bal \end{array}\right\}
            \Le \ex(\varsigma_{\bal}\varsigma_{\bbeta}),
$$
where $N_\bal^U,$ $N_\bal^L$ and $N_\bal^D$ are defined in Figure~\ref{figure1}.
Noting that $|X^{(\bal)}|=\varsigma_{\bal}\bone_{[\bal\in E']}$, 
we have
\begin{equation}\ex\int_{\bal\in E'} \varsigma_{\bal}\Xi(d\bal) \Eq \int_{\bal\in E'} \ex_\bal(\varsigma_{\bal})\nu(d\bal)
   \Eq \int_{\bal\in E'} \theta_{\bal}\nu(d\bal) \Eq |\m|_1 \Eq O(\lambda^{1/2}).\label{maximal34}
\end{equation}
It thus follows that
$$
   H_0' \Eq d^{-1/2}\hn^{-1}\int_{\bal\in E'}\ex_\bal (\varsigma_{\bal})\nu(d\bal)
   \Eq d^{-1/2}\hn^{-1}\int_{\bal\in E'}\theta_{\bal}\nu(d\bal) \Eq O(1).
$$
For $H_1'$ and $H_{2i}'$, we repeatedly need to apply the estimate
\begin{equation}
   \nu(D_{\bal}\cap E') \Eq \l O(\lambda^{-1}) \Eq O(1),\label{maximal35}
\end{equation}
{which follows because the area of~$D_{\bal}\cap E'$ is of order $O(\lambda^{-1})$ and the intensity of~$\Xi$
is of order~$O(\l)$.}
Note that the bound \Ref{maximal35} and all the upper bounds below are uniform in $\bal$ and~$\bbeta$.

First, with \Ref{maximal35} in mind, we obtain
\begin{eqnarray*}
    \int_{\bbeta\in D_{\bal}\cap E'}\ex\{\varsigma_{\bbeta}\giv \varsigma_{\bal}=1\}\nu(d\bbeta)
     &\le& \nu(D_{\bal}\cap E') \Eq O(1),\\
    \int_{\bbeta\in D_{\bal}\cap E'}\theta_{\bbeta}\nu(d\bbeta) &\le& \nu(D_{\bal}\cap E') \Eq O(1),
\end{eqnarray*}
which, together with \Ref{maximal34}, imply that
\begin{eqnarray}
  &&\int_{\bal\in E'}\int_{\bbeta\in D_{\bal}\cap E'}
        \{\ex (\varsigma_{\bal}\varsigma_{\bbeta})+\theta_{\bal}\theta_{\bbeta}\}\nu(d\bbeta)\nu(d\bal)\nonumber\\
  &&\Eq \int_{\bal\in E'}\int_{\bbeta\in D_{\bal}\cap E'}
       \left\{\ex \left[\varsigma_{\bbeta}\giv \varsigma_{\bal}=1\right]+\theta_{\bbeta}\right\}
            \theta_{\bal}\nu(d\bbeta)\nu(d\bal)\nonumber\\
  && \Eq O(1)\int_{\bal\in E'}\theta_{\bal}\nu(\bal) \Eq O(\lambda^{1/2}).\label{maximal28}
\end{eqnarray}
It therefore follows from \Ref{maximal28} that
\begin{eqnarray*}
    H_1'&\le&d^{-1}\hn^{-1}\int_{\bal\in E'}\int_{\bbeta\in D_{\bal}\cap E'}
           \{\ex_{\bal\bbeta} (\varsigma_{\bal}\varsigma_{\bbeta})+\theta_{\bal}\ex_\bbeta(\varsigma_{\bbeta})\}
                  \nu(d\bbeta)\nu(d\bal)\\
     &\le& d^{-1}\hn^{-1}\int_{\bal\in E'}\int_{\bbeta\in D_{\bal}\cap E'}
             \left\{\ex(\varsigma_{\bal}\varsigma_{\bbeta})+\theta_{\bal}\theta_{\bbeta}\right\}\nu(d\bbeta)\nu(d\bal) 
                    \Eq O(1).\end{eqnarray*}

To show that $H_{21}'=O(1)$, we proceed as follows. With $\ps_{\bal}:=\Xi( D_{\bal}\cap E')$, we obtain 
from~\Ref{maximal35} that
\begin{equation}
    \ex \left\{|Z^{(\bal)}|^2\right\} \Le \ex\{\ps_{\bal}^2\}
              \Eq \nu(D_{\bal}\cap E')+\nu(D_{\bal}\cap E')^2 \Eq O(1).\label{maximal26}
\end{equation}
Since $\varsigma_{\bal}$ is independent of $\ps'_{\bal}:=\Xi(D_{\bal}\cap A_{\bal}^c\cap E')$, it follows from \Ref{maximal35} and \Ref{maximal26} that
\begin{equation}
   \ex_\bal \{|Z^{(\bal)}|^2\giv\varsigma_{\bal}=1\} 
      \Le \ex_\bal\left\{\left.\left(\ps'_{\bal}\right)^2\right| \varsigma_{\bal}=1\right\}
       \Eq \ex\left\{\left(1+\ps'_{\bal}\right)^2\right\} \Le \ex\{(1+\ps_{\bal})^2\} \Eq O(1).\label{maximal27}
\end{equation}
Combining \Ref{maximal26} and \Ref{maximal27} with~\Ref{maximal34} then ensures that
\begin{eqnarray*}
   H_{21}'&\le&d^{-3/2}\hn^{-1}\int_{\bal\in E'}
           \left\{\ex_\bal (\varsigma_{\bal}|Z^{(\bal)}|^2)+\theta_{\bal}\ex(|Z^{(\bal)}|^2)\right\}\nu(d\bal) \non\\
     &=& d^{-3/2}\hn^{-1}\int_{\bal\in E'}
         \left\{\ex_\bal[|Z^{(\bal)}|^2\giv \varsigma_{\bal}=1]+\ex[|Z^{(\bal)}|^2]\right\}\theta_{\bal}\nu(d\bal) \non\\
     &=& O(\lambda^{-1/2})\int_{\bal\in E'}\theta_{\bal}\nu(d\bal) \Eq O(1).
\end{eqnarray*}
In order to bound $H_{22}'$, $H_{23}'$ and $H_{24}'$, we apply~\Ref{maximal35} again to get the estimates
\begin{eqnarray*}
\ex_\bbeta \{|Z^{(\bal,\bbeta)}|\giv \varsigma_{\bbeta}=1\}
      &\le&\ex_\bbeta\Xi(D_{\bal}^c\cap D_{\bbeta}\cap A_{\bbeta}^c\cap E')
           \Le 1+\nu(D_{\bbeta}\cap E') \Eq O(1),\\
    \ex_{\bal\bbeta} \{|Z^{(\bal,\bbeta)}|\giv\varsigma_{\bal}=\varsigma_{\bbeta}=1\}
      &\le&\ex_{\bal\bbeta}\Xi(D_{\bal}^c\cap D_{\bbeta}\cap A_{\bal}^c\cap A_{\bbeta}^c\cap E')\le2+\nu(D_{\bbeta}\cap E')
           \Eq O(1),\\
     \ex |Z^{(\bal,\bbeta)}|&\le&\ex\Xi(D_{\bal}^c\cap D_{\bbeta}\cap E')
                  \Le \nu(D_{\bbeta}\cap E') \Eq O(1),\\
      \ex |Z^{(\bal)}|&\le&\ex\Xi(D_{\bal}\cap E') \Eq \nu(D_{\bal}\cap E') \Eq O(1).
\end{eqnarray*}
These in turn show that
\eqa
  H_{22}'&\le&d^{-3/2}\hn^{-1}\int_{\bal\in E'}\int_{\bbeta\in D_{\bal}\cap E'}
    \left\{\ex_{\bal\bbeta} (\varsigma_{\bal}\varsigma_{\bbeta}|Z^{(\bal,\bbeta)}|)
          +\theta_{\bal}\ex_\bbeta(\varsigma_{\bbeta}|Z^{(\bal,\bbeta)}|)\right\}\nu(d\bbeta)\nu(d\bal)\nonumber\\
   &=& d^{-3/2}\hn^{-1}\int_{\bal\in E'}
       \int_{\bbeta\in D_{\bal}\cap E'}\ex_{\bal\bbeta}(|Z^{(\bal,\bbeta)}|\giv \varsigma_{\bal}=\varsigma_{\bbeta}=1)
             \pr_{\bal\bbeta}(\varsigma_{\bal}=\varsigma_{\bbeta}=1)\nu(d\bbeta)\nu(d\bal)\nonumber\\
   &&\mbox{} +d^{-3/2}\hn^{-1}\int_{\bal\in E'}\int_{\bbeta\in D_{\bal}\cap E'}
         \ex_\bbeta(|Z^{(\bal,\bbeta)}|\giv\varsigma_{\bbeta}=1)\theta_{\bal}\theta_{\bbeta}\nu(d\bbeta)\nu(d\bal)\nonumber\\
   &=& O(\lambda^{-1/2})\int_{\bal\in E'}\int_{\bbeta\in D_{\bal}\cap E'}
          \{\ex(\varsigma_{\bal}\varsigma_{\bbeta})+\theta_{\bal}\theta_{\bbeta}\} \Eq O(1),\label{maximal29}\\
  H_{23}'&=&d^{-3/2}\hn^{-1}\int_{\bal\in E'}\int_{\bbeta\in D_{\bal}\cap E'}
           \left\{\ex_{\bal\bbeta} (\varsigma_{\bal}\varsigma_{\bbeta})+\theta_{\bal}\ex_\bbeta(\varsigma_{\bbeta})\right\}
             \ex|Z^{(\bal,\bbeta)}|\nu(d\bbeta)\nu(d\bal)  \nonumber\\
   &=& O(\lambda^{-1/2})\int_{\bal\in E'}\int_{\bbeta\in D_{\bal}\cap E'}
      \{\ex(\varsigma_{\bal}\varsigma_{\bbeta})+\theta_{\bal}\theta_{\bbeta}\}\nu(d\bbeta)\nu(d\bal)=O(1),\label{maximal30}
\ena
and
\eqa
  H_{24}'&=&d^{-3/2}\hn^{-1}\int_{\bal\in E'}\int_{\bbeta\in D_{\bal}\cap E'}
          \{\ex_{\bal\bbeta}(\varsigma_{\bal}\varsigma_{\bbeta})+\theta_{\bal}\ex_\bbeta(\varsigma_{\bbeta})\}
               \ex|Z^{(\bal)}|\nu(d\bbeta)\nu(d\bal) \nonumber\\
   &=&O(\lambda^{-1/2})\int_{\bal\in E'}\int_{\bbeta\in D_{\bal}\cap E'}
          \{\ex(\varsigma_{\bal}\varsigma_{\bbeta})+\theta_{\bal}\theta_{\bbeta}\}\nu(d\bbeta)\nu(d\bal)
       \Eq O(1),\label{maximal31}
\ena
where the last equalities in \Ref{maximal29}--\Ref{maximal31} are from \Ref{maximal28}. 

Next, we turn to \Ref{Zhat-def-cont}. In view of \Ref{maximal35} and \Ref{maximal26}, we have
the bounds
\begin{eqnarray*}
   \ex_\bal(|Z^{(\bal)}|^2) &\le& \ex\{(1+\ps_\bal)^2\} \Eq O(1),\quad 
          \ex_\bbeta(|Z^{(\bal)}|^2) \Le \ex\{(1+\ps_\bal)^2\} \Eq O(1),\\
   \ex(|Z^{(\bal,\bbeta)}|^2) &\le& \ex(\ps_\bbeta^2) \Eq O(1), \qquad \qquad
       \ex_\bbeta(|Z^{(\bal,\bbeta)}|^2) \Le \ex\{(1+\ps_\bbeta)^2\} \Eq O(1),\\
  \ex_{\bal\bbeta}(|Z^{(\bal)}|^2) &\le& \ex\{(2+\ps_\bal)^2\} \Eq O(1),\quad 
       \ex_{\bal\bbeta}(|Z^{(\bal,\bbeta)}|^2) \Le \ex\{(1+\ps_\bbeta)^2\} \Eq O(1).
\end{eqnarray*}
To show that both $\ex_\bal(|W-\m|^2)$ and $\ex_{\bal\bbeta}(|W-\m|^2)$ are bounded by 
$\ccc d\hn=O(\l^{1/2})$, for a suitbaly chosen~$\ccc$, we use the following crude estimates,
which are adequate under local dependence conditions:
\eqs
  \ex_\bal(|W-\m|^2) &\le& 2\ex_\bal(|W^{(\bal)}-\m|^2)+2\ex_\bal(|Z^{(\bal)}|^2)
              \Eq 2\ex(|W^{(\bal)}-\m|^2)+2\ex_\bal(|Z^{(\bal)}|^2)\\
     &\le&4\ex(|W-\m|^2) + 4\ex(|Z^{(\bal)}|^2) + 2\ex_\bal(|Z^{(\bal)}|^2) \Eq O(\l^{1/2}),
\ens
and
\eqs
  \ex_{\bal\bbeta}(|W-\m|^2) &\le& 2\ex_{\bal\bbeta}(|W^{(\bal,\bbeta)}-\m|^2)
            + 2\ex_{\bal\bbeta}(|Z^{(\bal)}+Z^{(\bal,\bbeta)}|^2)\\
      &=& 2\ex(|W^{(\bal,\bbeta)}-\m|^2)+2\ex_{\bal\bbeta}(|Z^{(\bal)}+Z^{(\bal,\bbeta)}|^2)\\
      &\le& 4\ex(|W-\m|^2) + 4\ex(|Z^{(\bal)}+Z^{(\bal,\bbeta)}|^2) + 2\ex_{\bal\bbeta}(|Z^{(\bal)}+Z^{(\bal,\bbeta)}|^2)\\
      &=& O(\l^{1/2});
\ens
hence \Ref{Zhat-def-cont} holds.

Finally, we show that $\e_{W}'=O(\lambda^{-1/4})$.  Referring to Figure~\ref{figure4}, we fix $\theta \ge d_2$ as a 
constant, and set $\kappa := \lfloor (\sqrt{\l}/\theta) - 1 \rfloor$.
We then define 
$$
   B_l \Def A_{(1-(l+1)\theta\l^{-1/2},l\theta\l^{-1/2})}; \quad  
   \boeta_l \Def \int_{\bal\in B_l\cap E'}X^{(\bal)}\Xi(d\bal),\quad l=0,1,\dots,\kappa.
$$ 
Then the $\boeta_l$'s are independent and identically distributed random vectors. For any $\bal\in E'$ 
and $\bbeta\in D_{\bal}\cap E'$, there are at most three of $B_l$'s such that 
$B_l\cap(D_{\bal}\cup D_{\bbeta})\ne \emptyset$, so we eliminate such $\boeta_l$'s and define 
$W'_{\bal,\bbeta}:=\sum_{l: B_l\cap(D_{\bal}\cup D_{\bbeta})= \emptyset}\boeta_l$. We  
use $W'_{\bal,\bbeta}$ to estimate $\e_{W}'$. To this end,
let ${\cal F}_{\bal,\bbeta}$ be the $\sigma$-algebra generated by the configurations of points 
of $\Xi$ in $\G\setminus \left(\cup_{l:\ B_l\cap (D_{\bal}\cup D_{\bbeta})=\emptyset} B_l\right)$, 
and let $d_{TV}\left(\left.W^{(\bal)},W^{(\bal)}+e^{(i)}\right|{\cal F}_{\bal,\bbeta}\right)$ denote the total variation
distance between $W^{(\bal)}$ and $W^{(\bal)}+e^{(i)}$ given configurations in ${\cal F}_{\bal,\bbeta}$ 
under $\pr$.  Then it follows that
\eqa
  &&\dtv\bigl(\law(W^{(\bal)} + e\uii \giv X^{(\bal)},Z^{(\bal)}),
                 \law(W\uja \giv X\uja,Z\uja) \bigr)\non\\
   &&\ \ \Le \esup \left\{d_{TV}\left(\left.W^{(\bal)},W^{(\bal)}+e^{(i)}\right|
                {\cal F}_{\bal,\bbeta}\right)\right\}=d_{TV}(W'_{\bal,\bbeta},W'_{\bal,\bbeta}+e^{(i)}),
                \label{maximal32}
\ena
where $\esup$ stands for the essential supremum. Likewise,
\eqa
   &&\dtv\bigl(\law_\a(W^{(\bal)} + e\uii \giv X^{(\bal)},Z^{(\bal)}),
                 \law_\a(W^{(\bal)} \giv X^{(\bal)},Z^{(\bal)}) \bigr) \non\\
                 &&\ \ \ \Le \esup \left\{d_{TV}\left(\left.W^{(\bal)},W^{(\bal)}+e^{(i)}\right|
           {\cal F}_{\bal,\bbeta}\right)\right\}=d_{TV}(W'_{\bal,\bbeta},W'_{\bal,\bbeta}+e^{(i)}),\non\\
  && \dtv\bigl(\law(W^{(\bal,\bbeta)} + e\uii \giv X^{(\bal)},\tX^{(\bal,\bbeta)},Z^{(\bal,\bbeta)}),
                 \law(W^{(\bal,\bbeta)} \giv X^{(\bal)},\tX^{(\bal,\bbeta)},Z^{(\bal,\bbeta)}) \bigr)\non\\
                  &&\ \ \ \Le \esup \left\{d_{TV}\left(\left.W^{(\bal,\bbeta)},W^{(\bal,\bbeta)}+e^{(i)}\right|
                        {\cal F}_{\bal,\bbeta}\right)\right\}=d_{TV}(W'_{\bal,\bbeta},W'_{\bal,\bbeta}+e^{(i)}),\non\\
  && \dtv\bigl(\law_\b(W^{(\bal,\bbeta)} + e\uii \giv X^{(\bal)},\tX^{(\bal,\bbeta)},Z^{(\bal,\bbeta)}),
                 \law_\b(W^{(\bal,\bbeta)} \giv X^{(\bal)},\tX^{(\bal,\bbeta)},Z^{(\bal,\bbeta)}) \bigr)\non\\
                  &&\ \ \ \Le \esup \left\{d_{TV}\left(\left.W^{(\bal,\bbeta)},W^{(\bal,\bbeta)}+e^{(i)}\right| 
                     {\cal F}_{\bal,\bbeta}\right)\right\}=d_{TV}(W'_{\bal,\bbeta},W'_{\bal,\bbeta}+e^{(i)}),\non\\
  && \dtv\bigl(\law_{\a\b}(W^{(\bal,\bbeta)} + e\uii \giv X^{(\bal)},\tX^{(\bal,\bbeta)},Z^{(\bal,\bbeta)}),
                 \law_{\a\b}(W^{(\bal,\bbeta)} \giv X^{(\bal)},\tX^{(\bal,\bbeta)},Z^{(\bal,\bbeta)}) \bigr) \non\\
                  &&\ \ \ \Le \esup \left\{d_{TV}\left(\left.W^{(\bal,\bbeta)},W^{(\bal,\bbeta)}+e^{(i)}\right|
           {\cal F}_{\bal,\bbeta}\right)\right\}=d_{TV}(W'_{\bal,\bbeta},W'_{\bal,\bbeta}+e^{(i)}).\label{maximal33}
\ena

On the other hand, 
$$
   d_{TV}(\boeta_1, \boeta_1+e^{(i)}) \Le 1-\pr(\boeta_1=0)\wedge \pr(\boeta_1=e^{(i)}).
$$ 
Noting that $B_1$ and $B_1\cap E_i$ satisfy
$$
    \nu(B_1) \Eq  \frac{\theta^2}{2},\quad\mbox{and}\quad  \nu(B_1\cap E_i) \Eq \frac1{2}(d_i-b_i)(2\theta-(d_i+b_i)),
$$
we obtain
\eqs
   \pr(\boeta_1=0) &\ge& \pr(\Ups(B_1)=0) \Eq e^{-\frac12\theta^2},\\ 
   \pr(\boeta_1=e^{(i)}) &\ge& \pr(\Ups(B_1\cap E_i)=1,\Ups(B_1\setminus E_i)=0)\\
  &=& \frac12(d_i-b_i)(2\theta-(d_i+b_i))e^{-\frac12\theta^2},
\ens
which together imply that
$$
    d_{TV}(\boeta_1, \boeta_1+e^{(i)}) \Le 1 - \left\{1\wedge\left(\frac12(d_i-b_i)(2\theta-(d_i+b_i))\right)\right\}
         e^{-\frac12\theta^2}.
$$
Hence it follows from Lemma~4.1 of \BLX~(2018b) that
$$
   d_{TV}(W'_{\bal,\bbeta},W'_{\bal,\bbeta}+e^{(i)}) \Eq O\left((\kappa-3)^{-1/2}\right) \Eq O(\l^{-1/4}),
$$
since $\kappa \ge (\sqrt{\l}/\theta) - 2$.
This, together with \Ref{maximal32} and \Ref{maximal33}, ensures that $\e_{W}'=O(\lambda^{-1/4})$, 
and completes the proof of the theorem.  \qed

\section{The proofs of Theorems~\ref{DN-approx} and \ref{DN-approx-cont}}\label{proofmainresults}

Before proving our main theorems, we establish an auxiliary lemma.
It is useful in what follows to be able to extend the definition of a function~$h$ from the ball
$B_{{\hn}\d}(nc)\cap\Z^d$ to the whole of~$\Z^d$ in such a way that $\|\D h\|_\infty$ can be bounded
in terms of~$\|\D h\|_{{3\hn\d/2,\infty}}$.  That this can be done, if ${\hn}\d \ge 2\sqrt d$, is proved 
using the following lemma.

\begin{lemma}\label{h-extension}
 Let~$h \colon \Z^d \to \re$ be given.  Then, for any $x\in \re^d$ and $r>0$, it is possible to 
modify~$h$ outside the set $\Z^d \cap B_{r}(x)$ in such a way that the resulting function~$\tih$
satisfies $\|\D\tih\|_\infty \le \sqrt d \|\D h\|_{{r + \sqrt d,\infty}}$.
\end{lemma}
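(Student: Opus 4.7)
I would construct $\tih$ by setting $\tih(y):=h(\pi(y))$ for all $y\in\mathbb{Z}^d$, where $\pi\colon\mathbb{Z}^d\to\bar B_{r+\sqrt{d}}(x)\cap\mathbb{Z}^d$ is a specific projection map fixing $\bar B_r(x)\cap\mathbb{Z}^d$. The two properties that $\pi$ must have are: (a) $\pi(y)=y$ for $y\in\bar B_r(x)\cap\mathbb{Z}^d$, which ensures that $\tih$ coincides with $h$ inside the ball and so modifies $h$ only outside $\mathbb{Z}^d\cap B_r(x)$; and (b) for any adjacent pair $y$ and $y+e^{(i)}$ in $\mathbb{Z}^d$, the two lattice points $\pi(y)$ and $\pi(y+e^{(i)})$ can be joined by a lattice path of $\ell^1$-length at most $\sqrt{d}$ lying entirely within $\bar B_{r+\sqrt{d}}(x)\cap\mathbb{Z}^d$.

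To realize such a $\pi$, I would first apply the continuous $\ell^2$-projection $\sigma\colon\mathbb{R}^d\to\bar B_r(x)$, which fixes points inside the ball and, being a projection onto a convex set, is $1$-Lipschitz in $\ell^2$. Then I would snap $\sigma(y)$ to a lattice point $\pi(y)$ by a deterministic coordinatewise rounding rule, in such a way that $\pi(y)\in\bar B_{r+\sqrt{d}}(x)\cap\mathbb{Z}^d$ and so that the composite map $\pi$ satisfies $\|\pi(y)-\pi(y+e^{(i)})\|_1\le\sqrt{d}$ for every coordinate direction $e^{(i)}$. The latter bound is motivated by the observation that $\|\sigma(y)-\sigma(y+e^{(i)})\|_2\le1$, which by Cauchy--Schwarz implies $\|\sigma(y)-\sigma(y+e^{(i)})\|_1\le\sqrt{d}$; a rounding rule chosen consistently near integer half-planes transfers this $\ell^1$ bound to the lattice images.

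Given the construction, the bound on $\|\D\tih\|_\infty$ follows by writing, for any adjacent $y, y+e^{(i)}\in\mathbb{Z}^d$,
\[
|\tih(y+e^{(i)})-\tih(y)|=|h(\pi(y+e^{(i)}))-h(\pi(y))|\le \|\pi(y+e^{(i)})-\pi(y)\|_1\cdot\|\D h\|_{r+\sqrt{d},\infty},
\]
and then using (b) together with the fact that a shortest lattice path from $\pi(y)$ to $\pi(y+e^{(i)})$ of $\ell^1$-length at most $\sqrt{d}$ can be chosen to remain in $\bar B_{r+\sqrt{d}}(x)$, so that every single-step difference along the path is controlled by $\|\D h\|_{r+\sqrt{d},\infty}$.

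The main technical obstacle is designing the deterministic tie-breaking in the lattice snapping so that the $\ell^1$-Lipschitz bound on $\pi$ holds uniformly, even when $\sigma(y)$ lies close to a Voronoi boundary of $\mathbb{Z}^d$, and so that intermediate path points stay inside $\bar B_{r+\sqrt{d}}(x)$; the enlargement by $\sqrt{d}$ is precisely what is needed to absorb the $\ell^2$-displacement of size at most $\sqrt{d}/2$ introduced by the rounding step.
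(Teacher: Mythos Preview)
Your overall strategy---project onto the ball, then pull back~$h$---matches the paper's, but the paper carries it out differently and thereby avoids the obstacle you flag.  Instead of snapping $\sigma(y)$ to a lattice point, the paper first extends~$h$ from $\Z^d\cap B_{r+\sqrt d}(x)$ to a \emph{continuous} function on~$B_r(x)$ by multilinear interpolation over each unit cube, shows that the resulting function has each partial derivative bounded by $\|\D h\|_{r+\sqrt d,\infty}$ (hence is $\sqrt d\,\|\D h\|_{r+\sqrt d,\infty}$-Lipschitz in~$\ell^2$), and then sets $\tih(y):=h(\pi_x y)$ with the \emph{continuous} radial projection~$\pi_x$.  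No rounding is ever needed, and the bound follows directly from the $1$-Lipschitz property of~$\pi_x$.

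Your proposed route has a genuine gap at the rounding step.  You need $\|\pi(y)-\pi(y+e^{(i)})\|_1\le\sqrt d$, but no coordinatewise rounding rule delivers this from $\|\sigma(y)-\sigma(y+e^{(i)})\|_2\le1$.  Already in $d=2$ the requirement becomes $\|\pi(y)-\pi(y+e^{(i)})\|_1\le1$ (an integer bound), yet for any fixed rounding rule one can place $\sigma(y)$ and $\sigma(y+e^{(i)})$ arbitrarily close together on opposite sides of two coordinate thresholds simultaneously, so that their rounded images differ by~$2$ in~$\ell^1$.  (For floor, take $\sigma(y)$ just below and $\sigma(y+e^{(i)})$ just above an integer lattice point in both coordinates; such configurations are realizable for suitable $x,r$ and~$y$ near the boundary of~$B_r(x)$.)  The phrase ``chosen consistently near integer half-planes'' does not resolve this: consistency merely shifts the discontinuity hyperplanes, it does not remove them.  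The same failure propagates to the claim that the connecting lattice path stays inside $B_{r+\sqrt d}(x)$.  The paper's interpolation trick is exactly what is needed to replace the discontinuous rounding by a Lipschitz map.
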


\begin{proof}
 First,  for all $y = (y_1,\ldots,y_d) \in B_{r}(x)$, we have
\[
  Z(y) \Def \lfloor y \rfloor  + \{0,1\}^d
           \ \subset\ B_{r+\sqrt d}(x),
\]
where $\lfloor y \rfloor := (\lfloor y_1 \rfloor, \ldots, \lfloor y_d \rfloor)$,
because, for each $z \in Z(y)$, $|z-y| \le \sqrt d$.  Extend the definition of~$h$ to all
$y \in B_r(x)$ by averaging over the values at the points~$Z(y)$:
\[
    h(y) \Def \sum_{{\qm} \in \{0,1\}^d} \Bigl\{\prod_{i=1}^d (1 - \{y_i\} 
               + {\qm}_i(2\{y_i\}-1))\Bigr\}\,h(\lfloor y \rfloor + {\qm}) ,
\]
where $\{y_i\} := y_i - \lfloor y_i \rfloor$.  It is immediate that~$h$ is continuous
in $B_r(x)$, and that, for~$y$ in the interior of any unit cube,
\eqs
     |D_j h(y)| &=& \Bigl|\sum_{{\qm} \in \{0,1\}^d} (2{\qm_j}-1)\Bigl\{\prod_{i\ne j} (1 - \{y_i\}
             + {\qm}_i(2\{y_i\}-1))\Bigr\}\,   h(\lfloor y \rfloor + {\qm})\Bigr| \\
    &\le& \sum_{{\qm}' \in \{0,1\}^{j-1}\times\{0\}\times\{0,1\}^{d-j}}\Bigl\{
            \prod_{i\ne j} (1 - \{y_i\} + {\qm}'_i(2\{y_i\}-1))\Bigr\}\,|\D_j h(\lfloor y \rfloor + {\qm}')| \\
    &\le& \|\D h\|_{{r+\sqrt d,\infty}}.
\ens
Hence it follows that $|h(y) - h(y')| \le \sqrt d \|\D h\|_{{r+\sqrt d,\infty}}|y-y'|$ for any $y,y' \in B_r(x)$.

Now define $\tih$ on $\re^d$ by setting $\tih(y) = h(y)$ on~$B_r(x)$, and $\tih(y) = h(\p_x y)$
for $y \notin B_r(x)$, where $\p_x y := x + r(y-x)/|y-x|$ is the projection of~$y$ onto the surface
of~$B_r(x)$.  Then, since 
\[
     |a-b| \ \ge\ \Bigl|\frac{a}{|a|} - \frac{b}{|b|}\Bigr|\quad \mbox{if}\quad |a|,|b| \ge 1,
\]
it follows that 
\[
     |\tih(y) - \tih(y')| \Eq |h(\p_x y) - h(\p_x y')| \Le \sqrt d \|\D h\|_{{r+\sqrt d,\infty}}|\p_x y - \p_x y'|
             \Le \sqrt d \|\D h\|_{{r+\sqrt d,\infty}} |y - y'|,
\]
and so $\|\D \tih\|_\infty \le \sqrt d \|\D h\|_{{r+\sqrt d,\infty}}$. \ep
\end{proof}

We are now in a position to prove our main theorems.

\medskip
\begin{prooftxia} 
We first prove Theorem~\ref{DN-approx-cont}.
Condition~(a) of Theorem~\ref{ADB-DN-approx-thm} follows directly from \Ref{dtv-assn-cont},
with $\e_{W}'$ for~$\e_1$.  We thus turn to Condition~(b), 
using the Stein operator~$\ABA_{\hn}$, as in~\Ref{A-def}, with~$\hn$ as defined in~\Ref{hn-def-cont}.

As a first step, choose some $\d>0$ such that $2\d \le {\d_0}$, where~${\d_0}$ is as in Theorem~\ref{ADB-DN-approx-thm}.
Given any function~$h$ to be used in Theorem~\ref{ADB-DN-approx-thm}(b),
use Lemma~\ref{h-extension} to continue it outside~$B_{3\hn\d/2}(\m)$ in such a way that
\eq\label{delta-extension}
    \|\D h\|_\infty \Le {\sqrt{d}}\|\D h\|_{{2\hn\d,\infty}} \Le {\sqrt{d}}\|\D h\|_{{\hn\d_0,\infty}},
\en 
possible provided that $\sqrt d \le \hn\d/2$;
since the bound given in the theorem is trivial (taking $C_{\ref{DN-approx}} \ge 1$ if necessary) if $\hn \le d^8$,
it is enough for this to suppose that $ \d \sqrt\hn \ge 2$.
We now observe by Cauchy--Schwarz and Chebyshev's inequality that
\eqs
   \lefteqn{|\ex\{(W-\m)^T\D h(W) I[|W-\m| > \hn\d]\}| }\\
     &&\Le \|\D h\|_\infty \{\ex |W - \m|^2 \pr[|W-\m| > \hn\d] \}^{1/2} 
     \Le \|\D h\|_\infty \tr(V)/(\hn\d) \Le d/\d \|\D h\|_\infty.
\ens
This allows the second part of $\ex\{\ABA_{\hn} h(W) I[|W-\m| \le \hn\d]\}$
to be computed without the indicator, at little cost:
\eqa
  \lefteqn{|\ex\{(W-\m)^T\D h(W) I[|W-\m| \le \hn\d]\} - \ex\{(W-\m)^T\D h(W)\}| }\non\\
        &&\Le (d/\d)\,\|\D h\|_\infty. \phantom{XXXXXXXXXXXXXXXXX} \label{Step-1}
\ena
Then, expanding $W$ as a sum and using $\ex_\a X\uja=\m\uja$, we have
\eqa
    \lefteqn{\ex\{(W-\m)^T\D h(W)\}} \\
    &&\qquad\Eq \intG \{\ex_\a\{(X\uja)^T\D h(Z\uja + W\uja)\}-\ex\{(\m\uja)^T\D h(Z\uja + W\uja)\}\}\nu(d\a)\non\\
          &&\qquad\Eq \intG \ex_\a\{(X\uja)^T\,(\D h(Z\uja + W\uja) - \D h(W\uja)) \} \nu(d\a)\non\\
          &&\qquad\ \ \ \ \ \ -\intG \ex\{(\m\uja)^T\,(\D h(Z\uja + W\uja) - \D h(W\uja)) \} \nu(d\a)+ \h_1+\h_2, 
          \label{Step-2}
\ena
where
\eqa
  |\h_1| &=&\left|\intG\ex_\a\left(\left(\ex_\a\left((X\uja)^T\big|W\uja\right)-(\m\uja)^T\right)\D h(W\uja)\right)\nu(d\a)\right|\Le (d{\hn})^{1/2}\|\D h\|_\infty \ch_{11}'\non\\
  |\h_2| &=&\left|\intG(\m\uja)^T\left(\ex_\a \D h(W\uja)-\ex \D h(W\uja)\right)\nu(d\a)\right|\Le 2d^{1/2}\hn\|\D h\|_\infty H_0'\e_W''.\label{Step-1.2}
\ena 

The next step is to approximate $\D h(Z\uja + W\uja) - \D h(W\uja)$ by $\D^2 h(W\uja)Z\uja$
in~\Ref{Step-2}, and to take care of the error.  This is accomplished in a number of steps.
First, in view of Condition~(b) of Theorem~\ref{ADB-DN-approx-thm}, we need {to}
express bounds on the second differences of~$h$ in terms of their supremum
in some ${\hn}\h$-ball around~$\m = {\hn}\cc$;  we do not have an analogue of Lemma~\ref{h-extension} for the
second differences. Thus we re-introduce truncation, to ensure that both $W\uja$ and~$W$
are close enough to~$\m$.  From \Ref{Chi-j-cont} {and~\Ref{Zhat-def-cont},} and by Chebyshev's inequality,
we have
\eqa
   \lefteqn{\ex_\a\bigl\{|(X\uja)^T\{\D h(Z\uja + W\uja) - \D h(W\uja)\}|
      (I[|W\uja - \m| > \hn\dnew] + I[|Z\uja| > \shn])\bigr\}  } \non\\
    && \Le 2\|\D h\|_\infty \bigl\{\ex_\a\{|X\uja| (\pr_\a[|W\uja-\m| >\hn\dnew]+ \hn^{-1}|Z\uja|^2)\}+ \ch_{12\a}'\bigr\}\non\\ 
   && \Le 2\|\D h\|_\infty \bigl\{\ex_\a\{|X\uja| (\pr_\a[|W-\m| > \hn\dnew/2]+ \pr_\a[|Z\uja| > \hn\dnew/2]+ \hn^{-1}|Z\uja|^2)\}+ \ch_{12\a}'\bigr\}\non\\ 
  && \Le 2\|\D h\|_\infty  \bigl\{ \hn^{-1}\ex_\a\{|X\uja| \bigl({8\ccc}\dnew^{-2}d + |Z\uja|^2 \bigr)\} 
                  + \ch_{12\a}' \bigr\}
         \phantom{XXXXXXXXXXX}\label{Step-1.15}
\ena
and
\eqa
   \lefteqn{\ex\bigl\{|(\m\uja)^T\{\D h(Z\uja + W\uja) - \D h(W\uja)\}|
      (I[|W\uja - \m| > \hn\dnew] + I[|Z\uja| > \shn])\bigr\}  } \non\\
    && \Le 2\|\D h\|_\infty \bigl\{\{|\m\uja| (\pr[|W\uja-\m| > \hn\dnew] + \hn^{-1}\ex(|Z\uja|^2))\} \bigr\}\non\\ 
    && \Le 2\|\D h\|_\infty \bigl\{\{|\m\uja| (\pr[|W-\m| > \hn\dnew/2] +\pr[|Z\uja| > \hn\dnew/2]+ \hn^{-1}\ex(|Z\uja|^2))\} \bigr\}\non\\ 
    && \Le 2\|\D h\|_\infty  \hn^{-1}|\m\uja| \bigl({8}\dnew^{-2}d + \ex(|Z\uja|^2) \bigr).
         \phantom{XXXXXXXXXXX}\label{Step-1.15-cont}
\ena

Integrating over~$\a$ with respect to $\nu$, it thus follows from~\Ref{H-defs-cont} that
\eqa
  \lefteqn{\intG \ex_\a \Bigl|(X\uja)^T\{\D h(Z\uja + W\uja) - \D h(W\uja)\} }\non\\
     &&\mbox{} 
       -  (X\uja)^T\{\D h(Z\uja + W\uja) - \D h(W\uja)\}I[|W\uja - \m| \le \hn\dnew] I[|Z\uja| \le \shn]\Bigr|\nu(d\a)
                   \non\\
     &\le& 2d^{3/2}\|\D h\|_\infty ({8\ccc}\dnew^{-2}H_0'+ H_{21}') +  2\|\D h\|_\infty (d\hn)^{1/2}\ch_{12}', \label{Step-3}
\ena
and 
\eqa
  \lefteqn{\intG \ex\Bigl|(\m\uja)^T\{\D h(Z\uja + W\uja) - \D h(W\uja)\} }\non\\
     &&\mbox{} 
       -  (\m\uja)^T\{\D h(Z\uja + W\uja) - \D h(W\uja)\}I[|W\uja - \m| \le \hn\dnew] I[|Z\uja| \le \shn]\Bigr|\nu(d\a)
                   \non\\
     &\le& 2d^{3/2}\|\D h\|_\infty (8\dnew^{-2}H_0' + H_{21}') . \label{Step-3-cont}
\ena
The integrals on the right hand side of~\Ref{Step-2} can thus be replaced by
\eqa
   \lefteqn{\intG \ex_\a\{(X\uja)^T\,(\D h(Z\uja + W\uja) - \D h(W\uja)) \}
             I[|W\uja - \m| \le \hn\dnew] I[|Z\uja| \le \shn] \}\nu(d\a) } \non\\
   \lefteqn{-\intG \ex\{(\m\uja)^T\,(\D h(Z\uja + W\uja) - \D h(W\uja)) \}
             I[|W\uja - \m| \le \hn\dnew] I[|Z\uja| \le \shn] \}\nu(d\a) } \non\\
 &&\Eq \intG \ex_\a\{(X\uja)^T\,\D^2 h(W\uja)Z\uja 
             I[|W\uja - \m| \le \hn\dnew] I[|Z\uja| \le \shn] \}\nu(d\a)\label{Step-2a}\\
&&\mbox{}\qquad + \intG \ex_\a\{(X\uja)^T\,(\D h(Z\uja + W\uja) - \D h(W\uja) - \D^2 h(W\uja)Z\uja)
       \phantom{XXXX}  \non\\
       &&\qquad\qquad \qquad\qquad     I[|W\uja - \m| \le \hn\dnew] I[|Z\uja| \le \shn]\}\nu(d\a) \label{Step-2b}\\
  &&\ \ \ \ \ \ -\intG \ex\{(\m\uja)^T\,\D^2 h(W\uja)Z\uja 
             I[|W\uja - \m| \le \hn\dnew] I[|Z\uja| \le \shn] \}\nu(d\a)\label{Step-2a-cont}\\
&&\mbox{}\qquad -\intG \ex\{(\m\uja)^T\,(\D h(Z\uja + W\uja) - \D h(W\uja) - \D^2 h(W\uja)Z\uja)
       \phantom{XXXX}  \non\\
       &&\qquad\qquad \qquad\qquad     I[|W\uja - \m| \le \hn\dnew] I[|Z\uja| \le \shn]\}\nu(d\a), \label{Step-2b-cont}
\ena
having truncation in both $W\uja$ and~$Z\uja$, with errors bounded by~\Ref{Step-3} and \Ref{Step-3-cont}.  

Now~\Ref{Step-2b} and \Ref{Step-2b-cont} can be represented in terms of sums of second differences of~$h$.
Defining $\hZ^{[\a,l]} := \sum_{t=1}^l Z_t\uja e^{(t)}$, {$1\le l\le d$}, we have
\eqs
   \lefteqn{ (\eii)^T \{\D h(Z\uja + W\uja) - \D h(W\uja) - \D^2h(W\uja)Z\uja\} }\\
          &&\Eq \sum_{l=1}^d\Blb \sum_{s=0}^{Z_l\uja-1}\{\D^2_{il} h(W\uja + \hZ^{[\a,l-1]} + s e\ul) - \D^2_{il}h(W\uja)\}
                     I[Z_l\uja \ge 1] \right.\\
        && \left. \qquad\qquad\mbox{}-\sum_{s=Z_l\uja}^{-1}\{\D^2_{il} h(W\uja + \hZ^{[\a,l-1]} + s e\ul) - \D^2_{il}h(W\uja)\}
                     I[Z_l\uja \le -1] \Brb.
\ens
Writing
\[
    h_{il}(w,\d) \Def \D^2_{il} h(w) I[|w-\m| \le \hn\d],
\]
it then follows that
\eqa
  \lefteqn{ \{\D^2_{il} h(W\uja + \hZ^{[\a,l-1]} + s e\ul) - \D^2_{il}h(W\uja)\} I[|W\uja - \m| \le \hn\dnew] } \non\\
   &=&  h_{il}(W\uja + \hZ^{[\a,l-1]} + s e\ul,\dnew) - h_{il}(W\uja,\dnew) \label{Step-3a}\\
   &&\qquad\mbox{} - \D^2_{il} h(W\uja + \hZ^{[\a,l-1]} + s e\ul) \non\\
    &&\qquad\qquad\mbox{}          \{I[|W\uja + \hZ^{[\a,l-1]} + s e\ul - \m| \le \hn\dnew]
                - I[|W\uja - \m| \le \hn\dnew]\}. \label{Step-3b}
\ena
The contribution from~\Ref{Step-3a} to~\Ref{Step-2b} and \Ref{Step-2b-cont} can be respectively bounded by first taking the expectation 
conditional on $X\uja$ and~$Z\uja$, and using~\Ref{dtv-assn-cont};  this gives
\eqa
   \lefteqn{\bigl| \ex_\a\bigl\{ (X_i\uja)\{h_{il}(W\uja + \hZ^{[\a,l-1]} + s e\ul,\dnew) 
              - h_{il}(W\uja,\dnew) \} 
                  I[|Z\uja| \le \hn\d_2] | \giv X\uja,Z\uja \bigr\} \bigr| } \non\\
    &&\Le |X_i\uja| \,2\|\D^2 h\|_{{\hn\dnew,\infty}}\,\e_{W}'\,(|s| + |\hZ^{[\a,l-1]}|_1),
        \phantom{XXXXXXXXXXXXXXXXXX} \label{Step-4}
\ena
and
\eqa
   \lefteqn{\bigl| \ex\bigl\{ (\ {\m_i\uja})\{h_{il}(W\uja + \hZ^{[\a,l-1]} + s e\ul,\dnew) 
              - h_{il}(W\uja,\dnew) \} 
                  I[|Z\uja| \le \hn\d_2] | \giv X\uja,Z\uja \bigr\} \bigr| } \non\\
    &&\Le {|\m_i\uja|} \,2\|\D^2 h\|_{{\hn\dnew,\infty}}\,\e_{W}'\,(|s| + |\hZ^{[\a,l-1]}|_1).
        \phantom{XXXXXXXXXXXXXXXXXX} \label{Step-4-cont}
\ena
Adding over $s$ and over $1\le i\le d$, integrating over~$\a\in\G$ with respect to~$\nu$ and taking expectations
with respect to $\ex_\a$ and~$\ex$ respectively, we get error bounds of at most
\eq\label{Step-6a}
\e_{W}' \intG  \ex_\a\{|X\uja|_1\,|Z\uja|_1(|Z\uja|_1+1)\} \|\D^2 h\|_{{3\hn\dnew/2,\infty}}\,\nu(d\a)
          \Le 2d^{3} \hn \|\D^2 h\|_{{3\hn\dnew/2,\infty}} H_{21}'\, \e_{W}'
\en
and
\eq\label{Step-6a-cont}
\e_{W}' \intG  \ex\{|\m\uja|_1\,|Z\uja|_1(|Z\uja|_1+1)\} \|\D^2 h\|_{{3\hn\dnew/2,\infty}}\,\nu(d\a)
          \Le 2d^{3} \hn \|\D^2 h\|_{{3\hn\dnew/2,\infty}} H_{21}'\, \e_{W}'.
\en

For the contribution from~\Ref{Step-3b} to~\Ref{Step-2b} and \Ref{Step-2b-cont},  
recalling that $\sqrt\hn \ge 2/\dnew$, we have
\eqs
  \lefteqn{\bigl|I[|W\uja + \hZ^{[\a,l-1]} + s e\ul - \m| \le \hn\dnew]- I[|W\uja - \m| \le \hn\dnew]\bigr|\,
          I[|Z\uja| \le \shn] }\\
  &&\Le I[|W\uja-\m| > \half\hn\dnew] \phantom{XXXXXXXXXXXXXXXXXXXXXXX}
\ens
for $0 \le s < Z_l\uja$ {if $ Z_l\uja\ge 1$, and for $Z_l\uja \le s < 0$ if $ Z_l\uja < 0$.
Arguing for $ Z_l\uja\ge 1$, we thus have
\eqs
   \lefteqn{\Bigl| \sum_{s=0}^{Z_l\uja-1} \D^2_{il} h(W\uja + {\hZ^{[\a,l-1]}} + s e\ul) } \\
   &&\qquad \{I[|W\uja + {\hZ^{[\a,l-1]}} + s e\ul - \m| \le \hn\dnew]- I[|W\uja - \m| \le \hn\dnew]\} 
          I[{|Z\uja|} \le \shn]  \bigr\} \Bigr| \\
   &&\Le {Z\uja_l} I[|W\uja-\m| > \half\hn\dnew] \|\D^2 h\|_{{3\hn\dnew/2,\infty}},
\ens
from which it follows that}
\eqa
    \lefteqn{\Bigl|\sum_{i=1}^d \ex_\a\bigl\{ X_i\uja \sum_{l=1}^d \sum_{s=0}^{Z_l\uja-1} \D^2_{il} 
    h(W\uja + {\hZ^{[\a,l-1]}} + s e\ul) }\non\\
   && \{I[|W\uja + {\hZ^{[\a,l-1]}} + s e\ul - \m| \le \hn\dnew]- I[|W\uja - \m| \le \hn\dnew]\} 
          I[|Z\uja| \le \shn]  \bigr\} \Bigr|  \non\\
    &\le& \|\D^2 h\|_{{3\hn\dnew/2,\infty}} \bigl\{\ex_\a|X\uja|_1\,\pr_\a[|W\uja-\m| > \half\hn\dnew]
                    + \ch_{13\a}'\bigr\}\sqrt{d\hn}
                   \phantom{X} \label{Step-a1} 
\ena
and that
\eqa
    \lefteqn{\Bigl|\sum_{i=1}^d \ex\bigl\{ {\m_i\uja} \sum_{l=1}^d \sum_{s=0}^{Z_l\uja-1} \D^2_{il} 
    h(W\uja + \hZ^{[\a,l-1]} + s e\ul) }\non\\
   && \{I[|W\uja + \hZ^{[\a,l-1]}+ s e\ul - \m| \le \hn\dnew]- I[|W\uja - \m| \le \hn\dnew]\} 
          I[|Z\uja| \le \shn]  \bigr\} \Bigr|  \non\\
    &\le& \|\D^2 h\|_{{3\hn\dnew/2,\infty}} |\m\uja|_1\,\pr[|W\uja-\m| > \half\hn\dnew]
                    \sqrt{d\hn}.
                   \phantom{X} \label{Step-a1-cont} 
\ena
{The argument for $ Z_l\uja < 0$ is almost exactly the same.}

The first part of~\Ref{Step-a1} yields at most
\eqa
  \lefteqn{\sqrt{d\hn} \|\D^2 h\|_{{3\hn\dnew/2,\infty}} d^{1/2} \ex_\a|X\uja| \,\pr_\a[|W\uja-\m| > \half\hn\dnew]
                    }\non\\
    &\le& d\sqrt{\hn} \|\D^2 h\|_{{3\hn\dnew/2,\infty}} \ex_\a|X\uja|\,\{\pr_\a[|W-\m| > \quarter\hn\dnew]
            + \pr_\a[|Z\uja| > \quarter\hn\dnew]\} \non\\
    &\le& 32{\ccc}d^{2}\dnew^{-2}\hn^{-1/2}\|\D^2 h\|_{{3\hn\dnew/2,\infty}} \ex_\a|X\uja| , \label{Step-5}
\ena
and~\Ref{Step-a1-cont} generates at most
\eqa
  \lefteqn{\sqrt{d\hn} \|\D^2 h\|_{{3\hn\dnew/2,\infty}} d^{1/2} |\m\uja| \,\pr[|W\uja-\m| > \half\hn\dnew]
                    }\non\\
    &\le& d\sqrt{\hn} \|\D^2 h\|_{{3\hn\dnew/2,\infty}} |\m\uja|\,\{\pr[|W-\m| > \quarter\hn\dnew]
            + \pr[|Z\uja| > \quarter\hn\dnew]\} \non\\
    &\le& 32d^{2}\dnew^{-2}\hn^{-1/2}\|\D^2 h\|_{{3\hn\dnew/2,\infty}} |\m\uja| , \label{Step-5-cont}
\ena
using Assumption~\Ref{Zhat-def-cont} and Chebyshev's inequality in the last steps.
\ignore{ 
the second part is bounded by 
\eq\label{Step-5a}
          d^{3/2}\hn\|\D^2 h\|_{{3\hn\dnew/2,\infty}}\ch_{13}'.
\en
}
Integrating over~$\a$ with respect to $\nu$, we deduce that the contribution from~\Ref{Step-3b} to~\Ref{Step-2b} 
is bounded by
\eqa
  \lefteqn{ 
   \Bigl({32\ccc}d^2\dnew^{-2} \hn^{-1/2}\intG \ex_\a|X\uja| \nu(d\a)+ d^{3/2}\hn\ch_{13}'\Bigr) \|\D^2 h\|_{{3\hn\dnew/2,\infty}} }\non\\
   &&\Le ( {32\ccc}d^{5/2}{\dnew^{-2}\hn}^{-1/2} H_0' + d^{3/2}\ch_{13}') \,\hn\|\D^2 h\|_{{3\hn\dnew/2,\infty}} 
          \label{Step-6}
\ena
and to~\Ref{Step-2b-cont} is bounded by
\eqa
  \lefteqn{ 
   \Bigl(32d^2\dnew^{-2} \hn^{-1/2}\intG |\m\uja| \nu(d\a)\Bigr) \|\D^2 h\|_{{3\hn\dnew/2,\infty}} }\non\\
   &&\Le ( 32d^{5/2}{\dnew^{-2}\hn^{-1/2}} H_0') \,\hn\|\D^2 h\|_{{3\hn\dnew/2,\infty}} . 
          \label{Step-6-cont}
\ena

This leaves the quantities in~\Ref{Step-2a} and \Ref{Step-2a-cont}.
First, we easily have
\eqs
   \lefteqn{|\ex_\a\{ (X\uja)^T \D^2 h(W\uja) Z\uja
            I[|W\uja - \m| \le \hn\dnew] I[|Z\uja| > \shn]\}|} \\
    && \Le \hn^{-1/2}\|\D^2 h\|_{{\hn\dnew,\infty}} \ex_\a\{|X\uja|_1\,|Z\uja|_1\,|Z\uja|\}, \phantom{XXXXXXXXXXXXXXX}
\ens
and
\eqs
   \lefteqn{|\ex\{ (\m\uja)^T \D^2 h(W\uja) Z\uja
            I[|W\uja - \m| \le \hn\dnew] I[|Z\uja| > \shn]\}|} \\
    && \Le \hn^{-1/2}\|\D^2 h\|_{{\hn\dnew,\infty}} |\m\uja|_1\,\ex\{|Z\uja|_1\,|Z\uja|\}, \phantom{XXXXXXXXXXXXXXX}
\ens  
so that $I[|Z\uja| \le \shn]$ can be dispensed with by incurring an extra error of at most
\eq\label{Step-6.0} 
  2 \hn\|\D^2 h\|_{{\hn\dnew,\infty}}\,d^{5/2}\hn^{-1/2}H_{21}'.  
\en
Then we can expand~$Z\uja$, giving
\eqa
  \lefteqn{ \ex_\a\{ (X\uja)^T \D^2 h(W\uja) Z\uja
            I[|W\uja - \m| \le \hn\dnew] \} }  \non\\
     &&\Eq \int_{D_\a}\ex_{\a\b}\{(X\uja)^T \D^2 h(W\uja) \tX\ujkab 
                    I[|W\uja - \m| \le \hn\dnew]\}\nu_\a(d\b)\phantom{XXX}  \label{Step-6.1}
\ena
and
\eqa
  \lefteqn{ \ex\{ (\m\uja)^T \D^2 h(W\uja) Z\uja
            I[|W\uja - \m| \le \hn\dnew] \} }  \non\\
     &&\Eq \int_{D_\a}\ex_\b\{(\m\uja)^T \D^2 h(W\uja) \tX\ujkab 
                    I[|W\uja - \m| \le \hn\dnew]\}\nu(d\b),\phantom{XXX}  \label{Step-6.1-cont}
\ena
and then introduce the indicator $I[|Z\ujkab| \le \sshn]$ in exchange for an error of at most
\eqa
     &&\|\D^2 h\|_{{\hn\dnew,\infty}}\intG\int_{D_\a}\ex_{\a\b}\{|X\uja|_1\, |\tX\ujkab|_1 |Z\ujkab|/\sshn\} \nu_2(d\a,d\b)\nonumber\\
            &&\qquad \Le 2\dnew^{-1} d^{5/2} H_{22}' \|\D^2 h\|_{{\hn\dnew,\infty}} . \phantom{XX}\label{Step-6.2}\\
     &&\|\D^2 h\|_{{\hn\dnew,\infty}}\intG\int_{D_\a}|\m\uja|_1\ex_\b\{|\tX\ujkab|_1 |Z\ujkab|/\sshn\} \nu(d\b)\nu(d\a)\nonumber\\
            &&\qquad \Le 2\dnew^{-1} d^{5/2} H_{22}' \|\D^2 h\|_{{\hn\dnew,\infty}} . \phantom{XX}\label{Step-6.2-cont}
\ena
The next step is to split $\D^2 h(W\uja)$ in~\Ref{Step-6.1} and \Ref{Step-6.1-cont}, for~$\b\in D_\a$, giving 
\eq\label{Step-1.5}
     \D^2 h(W\uja) \Eq (\D^2 h(W\uja) - \D^2 h(W\ujkab)) + \D^2 h(W\ujkab).
\en
Much as for~\Ref{Step-6}, we write
\eqs
   \lefteqn{ (\D^2 h(W\uja) - \D^2 h(W\ujkab))I[|W\uja - \m| \le \hn\dnew] }\\
   &&\Eq  (\D^2 h(W\uja)I[|W\uja - \m| \le \hn\dnew] - \D^2 h(W\ujkab)I[|W\ujkab - \m| \le \hn\dnew]) \\
    &&\mbox{}\qquad         + \D^2 h(W\ujkab)(I[|W\ujkab - \m| \le \hn\dnew] - I[|W\uja - \m| \le \hn\dnew]).
\ens
Now, using~\Ref{dtv-assn-cont}, we deduce that
\eqa
  \lefteqn{ | \ex_{\a\b}\{(X\uja)^T (\D^2 h(W\uja)I[|W\uja - \m| \le \hn\dnew]
        - \D^2 h(W\ujkab)I[|W\ujkab - \m| \le \hn\dnew]) \tX\ujkab }  \non\\
   &&\qquad  I[|Z\ujkab| \le \sshn]  \giv X\uja,\tX\ujkab,Z\ujkab\}| \phantom{XXXXXXXXXXXXXXXXXXX} \non\\
    &&\Le |X\uja|_1\,2\|\D^2 h\|_{{3\hn\dnew/2,\infty}}\,|\tX\ujkab|_1 |Z\ujkab|_1 \,\e_{W}', \label{Step-7}
\ena
and
\eqa
  \lefteqn{ | \ex_\b\{(\m\uja)^T (\D^2 h(W\uja)I[|W\uja - \m| \le \hn\dnew]
        - \D^2 h(W\ujkab)I[|W\ujkab - \m| \le \hn\dnew]) \tX\ujkab }  \non\\
   &&\qquad  I[|Z\ujkab| \le \sshn]  \giv X\uja,\tX\ujkab,Z\ujkab\}| \phantom{XXXXXXXXXXXXXXXXXXX} \non\\
    &&\Le |\m\uja|_1\,2\|\D^2 h\|_{{3\hn\dnew/2,\infty}}\,|\tX\ujkab|_1 |Z\ujkab|_1 \,\e_{W}', \label{Step-7-cont}
\ena
giving a first contribution to the errors incurred in~\Ref{Step-2a} and \Ref{Step-2a-cont} by splitting $\D^2 h(W\uja)$ in~\Ref{Step-6.1} and \Ref{Step-6.1-cont}
of
\eq\label{Step-6.07}
    2\hn\|\D^2 h\|_{{3\hn\dnew/2,\infty}}\,d^3 H_{22}' \e_{W}'.
\en
For the remaining contribution, because 
\[
   |I[|W\ujkab - \m| \le \hn\dnew] - I[|W\uja - \m| \le \hn\dnew]|\,I[|Z\ujkab| \le \sshn]
         \Le I[|W\ujkab - \m| > \half\hn\dnew],
\]
we have
\eqa
  \lefteqn{  \ex_{\a\b}\{|(X\uja)^T \D^2 h(W\ujkab)(I[|W\ujkab - \m| \le \hn\dnew] - I[|W\uja - \m| \le \hn\dnew])
           \tX\ujkab| 
       I[|Z\ujkab| \le \sshn]  \}  } \non\\
   &\le& \|\D^2 h\|_{{3\hn\dnew/2,\infty}} \bigl(\ex_{\a\b}\{|X\uja|_1\,|\tX\ujkab|_1\}   \pr_{\a\b}[|W\ujkab - \m| > \half\hn\dnew]
           + \ch_{2\a\b}' \bigr). \phantom{XXXXXXXXXX}
                \label{Step-7.5}
\ena
and
\eqa
  \lefteqn{  \ex_\b\{|(\m\uja)^T \D^2 h(W\ujkab)(I[|W\ujkab - \m| \le \hn\dnew] - I[|W\uja - \m| \le \hn\dnew])
           \tX\ujkab| 
       I[|Z\ujkab| \le \sshn]  \}  } \non\\
   &\le& \|\D^2 h\|_{{3\hn\dnew/2,\infty}} \bigl(|\m\uja|_1\,\ex_\b\{|\tX\ujkab|_1\}   \pr_\b[|W\ujkab - \m| > \half\hn\dnew]
           + \ch_{2\a\b}'' \bigr). \phantom{XXXXXXXXXX}
                \label{Step-7.5-cont}
\ena
Integrating over~$\b\in D_\a$ and then~$\a\in\G$, and using Assumption~\Ref{Zhat-def-cont}, the 
first part of~\Ref{Step-7.5} gives at most         
\eqa
   \lefteqn{d\|\D^2 h\|_{{3\hn\dnew/2,\infty}} \intG \int_{D_\a}\ex_{\a\b}\{|X\uja|\,|\tX\ujkab|\}   }
     \non\\
   &&\qquad\mbox{} \{\pr_{\a\b}[|W - \m| > \quarter\hn\dnew] + \pr_{\a\b}[|Z\uja| > \eighth \hn\dnew] 
        + \pr_{\a\b}[|Z\ujkab| > \eighth \hn\dnew]\}\nu_2(d\a,d\b) \phantom{XXXXXXXXX} \non\\
   &\le& \frac{144{\ccc}d^2}{\hn\dnew^{2}} \|\D^2 h\|_{{3\hn\dnew/2,\infty}}\intG \int_{D_\a}  
                            \ex_{\a\b}\{|X\uja|\,|\tX\ujkab|\}\nu_2(d\a,d\b)
       \Eq   \frac{144{\ccc}d^3}{\dnew^{2}} \,H_{1}'  \|\D^2 h\|_{{3\hn\dnew/2,\infty}}     \label{Step-8}
\ena
and the first part of~\Ref{Step-7.5-cont} produces at most         
\eqa
   \lefteqn{d\|\D^2 h\|_{{3\hn\dnew/2,\infty}} \intG \int_{D_\a}|\m\uja|\,\ex_{\b}\{|\tX\ujkab|\}   }
     \non\\
   &&\qquad\mbox{} \{\pr_{\b}[|W - \m| > \quarter\hn\dnew] + \pr_{\b}[|Z\uja| > \eighth \hn\dnew] 
        + \pr_{\b}[|Z\ujkab| > \eighth \hn\dnew]\}\nu(d\b)\nu(d\a) \phantom{XXXXXXXXX} \non\\
   &\le& \frac{144{\ccc}d^2}{\hn\dnew^{2}} \|\D^2 h\|_{{3\hn\dnew/2,\infty}}\intG \int_{D_\a} |\m\uja|\, 
                    \ex_{\b}\{|\tX\ujkab|\}\nu(d\b)\nu(d\a)
       \Eq   \frac{144{\ccc}d^3}{\dnew^{2}} \,H_{1}'  \|\D^2 h\|_{{3\hn\dnew/2,\infty}}.     \label{Step-8-cont}
\ena
The second parts of~\Ref{Step-7.5} and \Ref{Step-7.5-cont} give at most $d^3\hn\|\D^2 h\|_{{3\hn\dnew/2,\infty}}\ch_{2}'$.
Thus \Ref{Step-6.07}, \Ref{Step-7.5}, \Ref{Step-7.5-cont}, \Ref{Step-8} and~\Ref{Step-8-cont} together give a 
contribution to the error of at most
\eq\label{Step-9}
     \hn \|\D^2 h\|_{{3\hn\dnew/2,\infty}}\,\{2d^3 H_{22}'\,\e_{W}' + {288\ccc}d^3\hn^{-1}H_{1}'\dnew^{-2} + d^3\ch_{2}'\}.
\en
Thus, having used~\Ref{Step-1.5} to replace $\D^2 h(W\uja)$ by $\D^2 h(W\ujkab)$ in~\Ref{Step-6.1} and \Ref{Step-6.1-cont},
with the error being bounded by the sum of \Ref{Step-6.2}, \Ref{Step-6.2-cont} and~\Ref{Step-9}, we are left with
\eq\label{Step-9.15}
    \ex_{\a\b}\{(X\uja)^T \D^2 h(W\ujkab) \tX\ujkab I[|W\uja - \m| \le \hn\dnew] I[|Z\ujkab| \le \sshn] \}
\en
and
\eq\label{Step-9.15-cont}
    \ex_{\b}\{(\m\uja)^T \D^2 h(W\ujkab) \tX\ujkab I[|W\uja - \m| \le \hn\dnew] I[|Z\ujkab| \le \sshn] \}.
\en

Exactly as above, we can replace $I[|W\uja - \m| \le \hn\dnew]$ by 
$I[|W\ujkab - \m| \le \hn\dnew]$, adding a second contribution as in~\Ref{Step-8}, \Ref{Step-8-cont} and $d^3\hn\|\D^2 h\|_{{3\hn\dnew/2,\infty}}\ch_{2}'$ to the error.
Then, to remove the factor $I[|Z\ujkab| \le \sshn]$, note that
\eqa\label{Step-9.16}
  \lefteqn{ \ex_{\a\b}\{|(X\uja)^T \D^2 h(W\ujkab) \tX\ujkab| I[|W\ujkab - \m| \le \hn\dnew] I[|Z\ujkab| > \sshn] \} }\non\\
     &&\Le \|\D^2 h\|_{{\hn\dnew,\infty}} \ex_{\a\b}\{|X\uja|_1\, |\tX\ujkab|_1   |Z\ujkab|\} /\sshn,\phantom{XXXXXXX}
\ena
and that
\eqa\label{Step-9.16-cont}
  \lefteqn{ \ex_{\b}\{|(\m\uja)^T \D^2 h(W\ujkab) \tX\ujkab| I[|W\ujkab - \m| \le \hn\dnew] I[|Z\ujkab| > \sshn] \} }\non\\
     &&\Le \|\D^2 h\|_{{\hn\dnew,\infty}} \ex_{\b}\{|\m\uja|_1\, |\tX\ujkab|_1   |Z\ujkab|\} /\sshn.\phantom{XXXXXXX}
\ena
Integrating over~$\b\in D_\a$ and~$\a\in\G$ thus gives a contribution to the error of at most 
\eq\label{Step-9.8}
    2d^{5/2}\dnew^{-1} H_{22}'\|\D^2 h\|_{{\hn\dnew,\infty}}.
\en  
 
After these adjustments, we are left with
\eqa
   \lefteqn{ \intG \int_{D_\a} \ex_{\a\b}\{(X\uja)^T \D^2 h(W\ujkab) \tX\ujkab 
                    I[|W\ujkab - \m| \le \hn\dnew]\}\nu_2(d\a,d\b) }    \label{Step-10}\\
      &&\Eq  \intG \int_{D_\a} \tr\bigl(\ex_{\a\b}((X\uja)(\tX\ujkab)^T)\, 
          \ex_{\a\b}\{\D^2 h(W\ujkab)I[|W\ujkab - \m| \le \hn\dnew]\} \bigr)\nu_2(d\a,d\b) + \h_3  \non  \\  
        &&\Eq  \intG \int_{D_\a} \tr\bigl(\ex_{\a\b}((X\uja)(\tX\ujkab)^T)\, 
          \ex\{\D^2 h(W\ujkab)I[|W\ujkab - \m| \le \hn\dnew]\} \bigr)\nu_2(d\a,d\b) + \h_3 +\h_4 \non                    
\ena
and
\eqa
   \lefteqn{ \intG \int_{D_\a} \ex_{\b}\{(\m\uja)^T \D^2 h(W\ujkab) \tX\ujkab 
                    I[|W\ujkab - \m| \le \hn\dnew]\}\nu(d\b)\nu(d\a) }    \label{Step-10-cont}\\
      &&\Eq  \intG \int_{D_\a} \tr\bigl(\ex_{\b}((\m\uja)(\tX\ujkab)^T)\, 
          \ex_{\b}\{\D^2 h(W\ujkab)I[|W\ujkab - \m| \le \hn\dnew]\} \bigr)\nu(d\b)\nu(d\a) + \h_5 \non \\ 
           &&\Eq  \intG \int_{D_\a} \tr\bigl(\ex_{\b}((\m\uja)(\tX\ujkab)^T)\, 
          \ex\{\D^2 h(W\ujkab)I[|W\ujkab - \m| \le \hn\dnew]\} \bigr)\nu(d\b)\nu(d\a) + \h_5+\h_6.  \non                      
\ena
  One can bound $\h_3$ and~$\h_5$ by
\eq\label{Eta-3-bnd}
   |\h_3|+|\h_5| \Le \hn\|\D^2 h\|_{{\hn\dnew,\infty}}\,d^3\ch_{2}',
\en
and each of $\h_4$ and~$\h_6$ by 
\eq\label{Eta-3'-bnd}
   \max\{|\h_4|,|\h_6|\} \Le 2d^2\hn\|\D^2 h\|_{{\hn\dnew,\infty}} H_{1}'\e_W''.
\en
Since, from~\Ref{dtv-assn-cont}, for any $1\le l,m\le d$, we have
\eqa\label{Step-10x1}
   \lefteqn{|\ex\{\D_{lm}^2 h(W)I[|W - \m| \le \hn\dnew]\} 
            - \ex\{\D_{lm}^2 h(W\ujkab)I[|W\ujkab - \m| \le \hn\dnew]\}| }\non\\
        &&\Le \|\D^2 h\|_{{\hn\dnew,\infty}} (\ex|Z\ujkab|_1 + \ex|Z\uja|_1) \e_{W}', \phantom{XXXXXXXXXXXXXXXXXX}
\ena
we can replace $\ex\{\D^2 h(W\ujkab)I[|W\ujkab - \m| \le \hn\dnew]\}$ by $\ex\{\D^2 h(W)I[|W - \m| \le \hn\dnew]\}$
in~\Ref{Step-10} and \Ref{Step-10-cont}, introducing {further errors} of at most
\eqa
   \lefteqn{\intG \int_{D_\a} \sum_{i=1}^d\sum_{l=1}^d |\ex_{\a\b}\{X_i\uja\tX_l\ujkab\}|
          (\ex|Z\ujkab|_1 + \ex|Z\uja|_1) \nu_2(d\a,d\b)\|\D^2 h\|_{{\hn\dnew,\infty}} \e_{W}' }  \non\\
   &&\Le \|\D^2 h\|_{{\hn\dnew,\infty}} \e_{W}' \intG \int_{D_\a} \ex_{\a\b}\{|X\uja|_1\,|\tX\ujkab|_1\}
           (\ex|Z\ujkab|_1 + \ex|Z\uja|_1) \nu_2(d\a,d\b)\non\\
   &&\Le d^{3}\hn\|\D^2 h\|_{{\hn\dnew,\infty}}(H_{23}' + H_{24}')\e_{W}', \label{Step-10a}
\ena
and
\eqa
   \lefteqn{\intG \int_{D_\a} \sum_{i=1}^d\sum_{l=1}^d |\ex_{\b}\{{\m_i\uja}\tX_l\ujkab\}|
          (\ex|Z\ujkab|_1 + \ex|Z\uja|_1) \nu(d\b)\nu(d\a)\|\D^2 h\|_{{\hn\dnew,\infty}} \e_{W}' }  \non\\
   &&\Le \|\D^2 h\|_{{\hn\dnew,\infty}} \e_{W}' \intG \int_{D_\a} |\m\uja|_1\,\ex_{\b}\{|\tX\ujkab|_1\}
           (\ex|Z\ujkab|_1 + \ex|Z\uja|_1) \nu(d\b)\nu(d\a)\non\\
   &&\Le d^{3}\hn\|\D^2 h\|_{{\hn\dnew,\infty}}(H_{23}' + H_{24}')\e_{W}', \label{Step-10a-cont}
\ena
and leaving the principal term of
\eq\label{Step-10b}
   \ex\{\tr(\hV \D^2 h(W))\,I[|W - \m| \le \hn\dnew]\} ,
\en
where
\eqa
  \hV &\Def& \intG \int_{D_\a} \ex_{\a\b}\bigl(X\uja(\tX\ujkab)^T\bigr) \nu_2(d\a,d\b)- \intG \int_{D_\a} \ex_{\b}\bigl(\m\uja(\tX\ujkab)^T\bigr) \nu(d\b)\nu(d\a)\non\\
        &\Eq& \ex\intG \int_{D_\a} X\uja(\tX\ujkab)^T \Xi(d\b)\Xi(d\a)- \ex\intG \int_{D_\a} \m\uja(\tX\ujkab)^T \Xi(d\b)\nu(d\a)\non\\
         & \Eq& \ex\intG  X\uja(Z\uja)^T\Xi(d\a)-\ex\intG  \m\uja(Z\uja)^T\nu(d\a).
          \label{hV-def}
\ena

We now recall the first term in $\ex\{\ABA_{\hn} h(W) I[|W-\m| \le \hn\d]\}$, which is
\eq\label{Step-11}
    \ex\{\tr(V \D^2 h(W))I[|W-\m| \le \hn\dnew]\} ,
\en 
differing from that in~\Ref{Step-10b} only because the matrix~$V = \cov(W)$ replaces~$\hV$.
If approximation by~$\DN_d(\m,\hV)$ is required,  it is now enough to collect the various errors.
If not, we can write
\eqs
    V \Eq \cov(W) &\Eq& \ex\intG \{X\uja W^T\}\Xi(d\a)-\ex\intG \{\m\uja W^T\}\nu(d\a),
\ens
{so that, recalling $W = W\uja + Z\uja$, we have 
\eqs
    V - \hV &=& \ex\intG \{X\uja (W\uja)^T\}\Xi(d\a)-\ex\intG \{\m\uja (W\uja)^T\}\nu(d\a) \\
       &=& \ex\intG \{X\uja (W\uja-\m)^T\}\Xi(d\a)-\ex\intG \{\m\uja (W\uja-\m)^T\}\nu(d\a). 
\ens
Defining}
\eqs
          V' &:=& \intG \ex_\a\bigl\{(\ex_\a(X\uja \giv W\uja) - \m\uja)(W\uja-\m)^T\bigr\}\nu(d\a),\\
         V'' &:=& \intG \m\uja\bigl\{\ex_\a\left((W\uja)^T\right)-\ex\left((W\uja)^T\right)\bigr\}\nu(d\a),
\ens
we thus have 
\eqs
    V - \hV \Eq V'+V''.
\ens
Hence the difference between \Ref{Step-10b} and~\Ref{Step-11} can be bounded by 
\eqa
    \lefteqn{ \|\D^2 h\|_{{\hn\dnew,\infty}}\sum_{i=1}^d \sum_{l=1}^d (|V'_{il} |+|V''_{il} |)} \non\\
    &&\Le \|\D^2 h\|_{{\hn\dnew,\infty}}\intG \ex_\a\{|\ex_\a(X\uja\giv W\uja)-\m\uja|_1\,|W\uja-\m|_1\} \nu(d\a)\non\\
    &&\ \ \ \ +\|\D^2 h\|_{{\hn\dnew,\infty}}\intG |\m\uja|_1\bigl|\ex_\a\left((W\uja)^T\right)-\ex\left((W\uja)^T\right)\bigr|_1\nu(d\a)\non\\
    && \Le d^2\hn \|\D^2 h\|_{{\hn\dnew,\infty}} \ch_3' 
        +d^{3/2}\hn \|\D^2 h\|_{{\hn\dnew,\infty}} H_0'\e_W''',\label{Step-14-x1}
\ena
{where the second element in~\Ref{Step-14-x1}} is from \Ref{dtvmissed2}.

Adding the error bounds in \Ref{Step-1}, \Ref{Step-1.2}, \Ref{Step-3}, \Ref{Step-3-cont}, \Ref{Step-6a}, \Ref{Step-6a-cont}, \Ref{Step-6}, \Ref{Step-6-cont}
\Ref{Step-6.0}, \Ref{Step-6.2}, \Ref{Step-6.2-cont}, \Ref{Step-9}, \Ref{Step-9.8}, \Ref{Eta-3-bnd}, \Ref{Eta-3'-bnd},
\Ref{Step-10a}, \Ref{Step-10a-cont} and~\Ref{Step-14-x1}, {using \Ref{delta-extension} and with}  {$\shn \ge 2/\d$}, 
gives
\eqs
  \lefteqn{|\ex\{\ABA_{\hn} h(W) I[|W-\m| \le \hn\d]\}| }\\
     &\le& C_1(\d)\{\hn^{-1/2}d^{3/2}(1+ H_0' + H_2') + d^{1/2}\ch_1'+(dm)^{1/2}H_0'\e_{W}''\}\, 
                     \hn^{1/2}\|\D h\|_{{\hn\d_0,\infty}}  \\
      &&\mbox{}\quad     + C_2(\d)\bigl\{\e_{W}' d^3H_2' +{d^3 \hn^{-1}}H_1' 
         + \hn^{-1/2} d^{5/2} (H_0'+H_2')  \\
      &&\mbox{} \qquad \qquad \qquad 
               +d^{3/2}\ch_1' + d^3\ch_2' + d^2\ch_3' +d^2H_1'\e_{W}''+d^{3/2}H_0'\e_{W}'''\bigr\}\, \hn\|\D^2 h\|_{{3\hn\dnew/2,\infty}}. 
\ens
Recalling Theorem~\ref{ADB-DN-approx-thm}, Theorem~\ref{DN-approx-cont} follows. 

{Theorem~\ref{DN-approx} can be deduced from Theorem~\ref{DN-approx-cont} directly by taking 
$\G=\{1,\dots,n\}$, $\Xi$ as the counting measure on $\G$ so that $\Xi(\{i\})=\nu(\{i\})=1$ for all $i\in \G$ and $\nu_2(\{i\},\{j\})=1$ for all $i,j\in\G$; replacing 
$\int$ with $\sum$; $\a$ with $j$, $\b$ with $k$; $\ex_\a$,
$\ex_\b$, $\ex_{\a\b}$ with $\ex$; $\pr_\a$, $\pr_\b$,
$\pr_{\a\b}$ with $\pr$ so that $\e_{W}''=\e_{W}'''=0$; $\ch'$, $H'$, $\e_{W}'$
 with $\ch$, $H$, $\e_{W}$.}
\end{prooftxia}

\end{document}